\renewcommand{\a}{\alpha}
\renewcommand{\b}{\beta}
\newcommand{\e}{\epsilon}
\renewcommand{\l}{\lambda}
\newcommand{\s}{\sigma}
\renewcommand{\O}{\Omega}
\newcommand{\la}{\langle}
\newcommand{\ra}{\rangle}
\newcommand{\leqs}{\leqslant}
\newcommand{\geqs}{\geqslant}
\newcommand{\normeq}{\trianglelefteqslant}
\newcommand{\vs}{\vspace{3mm}}
\newcommand{\imod}[1]{\allowbreak\mkern4mu({\operator@font mod}\,\,#1)}
\theoremstyle{plain}
\newtheorem{theorem}{Theorem} 
\newtheorem{corol}[theorem]{Corollary}
\newtheorem{thm}{Theorem}[section] 
\newtheorem{lem}[thm]{Lemma}
\newtheorem{prop}[thm]{Proposition}
\newtheorem*{theorem*}{Theorem} 
\newtheorem*{conj*}{Conjecture}
\theoremstyle{definition}
\newtheorem{rem}[thm]{Remark}
\newtheorem{ex}[thm]{Example}
\begin{document}

\title[Base sizes for primitive groups with soluble stabilisers]{Base sizes for primitive groups with \\ soluble stabilisers}

\author{Timothy C. Burness}
\address{T.C. Burness, School of Mathematics, University of Bristol, Bristol BS8 1UG, UK}
\email{t.burness@bristol.ac.uk}

\date{\today} 

\begin{abstract}
Let $G$ be a finite primitive permutation group on a set $\Omega$ with point stabiliser $H$. Recall that a subset of $\O$ is a base for $G$ if its pointwise stabiliser is trivial. We define the base size of $G$, denoted $b(G,H)$, to be the minimal size of a base for $G$. Determining the base size of a group is a fundamental problem in permutation group theory, with a long history stretching back to the 19th century. Here one of our main motivations is a theorem of Seress from 1996, which states that $b(G,H) \leqs 4$ if $G$ is soluble. In this paper we extend Seress' result by proving that $b(G,H) \leqs 5$ for all finite primitive groups $G$ with a soluble point stabiliser $H$. This bound is best possible. We also determine the exact base size for all almost simple groups and we study random bases in this setting. For example, we prove that the probability that $4$ random elements in $\O$ form a base tends to $1$ as $|G|$ tends to infinity.
\end{abstract}

\maketitle

\section{Introduction}\label{s:intro}

Let $G$ be a permutation group on a set $\Omega$ and recall that a subset of $\Omega$ is a \emph{base} for $G$ if its pointwise stabiliser is trivial (that is, only the identity element fixes every point in the subset). The minimal cardinality of a base is called the \emph{base size} of $G$ and this invariant has been widely studied for more than a century, with numerous applications and connections to other areas of algebra and combinatorics. We refer the reader to the survey articles \cite{BC,LSh3} and \cite[Section 5]{Bur181} for more background on bases and their applications.

Determining the precise base size of a finite permutation group is a difficult problem, in general. Indeed, there is no known efficient algorithm for computing this number or for constructing a base of minimal size. In particular, a theorem of Blaha \cite[Theorem 3.1]{Blaha} implies that the problem of determining if the base size is at most a given integer is \textsf{NP}-complete. Therefore, it is natural to seek bounds on base sizes for interesting families of groups and there have been several major advances in this direction in recent years, particularly in the context of finite primitive groups.

Let $G \leqs {\rm Sym}(\O)$ be a finite primitive group of degree $n$ with point stabiliser $H$ and write $b(G,H)$ for the base size of $G$. Notice that if $G = S_n$ is the symmetric group in its natural action, then $b(G,H) = n-1$. Similarly, $b(G,H) = n-2$ for $G = A_n$. If $G$ is neither $S_n$ nor $A_n$, then a theorem of Bochert \cite{Boch} from 1889 shows that $b(G,H) \leqs n/2$. The best possible result here (up to a multiplicative constant) is due to Liebeck \cite{L10}, which states that either $b(G,H) < 9 \log_2 n$, or $n = \binom{m}{k}^r$ and $(A_m)^r \normeq G  \leqs S_m \wr S_r$, where the action of $S_m$ is on $k$-sets and the wreath product has the product action. This result, which relies on the Classification of Finite Simple Groups, extends earlier work of Babai \cite{Babai}, who proved that $b(G) < 4 \sqrt{n} \log_en$ if $G$ is simply primitive (Babai's proof does not use the Classification).

Further motivation for studying bases for finite primitive groups stems from several highly influential conjectures of Cameron, Kantor and Pyber from the early 1990s. As an immediate consequence of the definition of a base, we observe that $|G| \leqs n^{b(G,H)}$ and thus $b(G,H) \geqs \log |G| / \log n$. A conjecture of Pyber \cite{Pyber} asserts that there exists an absolute constant $c$ such that 
\[
b(G,H) \leqs c\frac{\log |G|}{\log n}
\]
for every primitive group $G$ of degree $n$. This conjecture has attracted the interest of various authors, with efforts to attack it organised according to the O'Nan-Scott theorem, which partitions the finite primitive groups into families depending on the structure and action of the socle of the group. By building on the work of several authors spanning more than 20 years, the proof of Pyber's conjecture was finally completed by Duyan, Halasi and Mar\'{o}ti \cite{DHM} in 2018. Also see \cite{HLM} for upper bounds with explicit constants. Stronger bounds have been established in some special cases. For example, if $G$ is soluble, then a striking theorem of Seress \cite{Seress} states that $b(G,H) \leqs 4$, which is best possible.

It is also possible to establish stronger bounds for some almost simple primitive groups (recall that $G$ is \emph{almost simple} if $G_0 \normeq G \leqs {\rm Aut}(G_0)$ for some nonabelian finite simple group $G_0$, which is the socle of $G$). Let us say that such a group $G \leqs {\rm Sym}(\O)$ is \emph{standard} if $G_0 = A_m$ is an alternating group and $\O$ is a set of subsets or partitions of $\{1, \ldots, m\}$, or if $G_0$ is a classical group and $\O$ is a set of subspaces (or pairs of subspaces) of the natural module for $G_0$ (otherwise, $G$ is \emph{non-standard}). For example, the natural action of $S_m$ is standard. In general, it is easy to see that if $G$ is standard of degree $n$ then $|G|$ is not bounded above by a fixed polynomial in $n$ and thus $b(G,H)$ can be arbitrarily large. However, if $G$ is non-standard then a conjecture of Cameron and Kantor \cite[p.142]{CK} asserts that $b(G,H) \leqs c$ for some absolute constant $c$ (they also conjectured that if $G$ is sufficiently large, then almost every $c$-tuple of points in $\O$ forms a base for $G$). This was subsequently refined by Cameron \cite[p.122]{CamPG}, who conjectured that $b(G,H) \leqs 7$, with equality if and only if $G$ is the Mathieu group ${\rm M}_{24}$ in its natural action on $24$ points. 

The original conjecture of Cameron and Kantor was proved by Liebeck and Shalev \cite{LSh2} using probabilistic methods and fixed point ratio estimates. By applying similar techniques, Cameron's refined conjecture was established in the sequence of papers \cite{Bur7,BGS,BLS,BOW}. The proof of Cameron's conjecture also reveals that if $G \leqs {\rm Sym}(\O)$ is non-standard and $\mathcal{P}(G,6)$ is the probability that a randomly chosen $6$-tuple of points in $\O$ forms a base for $G$, then $\mathcal{P}(G,6) \to 1$ as $|G| \to \infty$. Also see \cite{Bur18} for a classification of the non-standard groups with $b(G,H) = 6$ (there are infinitely many).

In this paper we extend some of this earlier work in several different directions. First recall Seress's theorem \cite{Seress}, which states that if $G$ is a finite primitive soluble group, then $b(G,H) \leqs 4$. In this setting, $G$ is an affine group and the point stabiliser $H$ is of course soluble itself. Given this result, it is natural to seek bounds on the base sizes of arbitrary primitive groups with soluble point stabilisers. It turns out that the base size of such a group is still bounded above by a small constant.

\begin{theorem}\label{t:main}
Let $G \leqs {\rm Sym}(\O)$ be a finite primitive permutation group with soluble point stabiliser $H$. Then $b(G,H) \leqs 5$.
\end{theorem}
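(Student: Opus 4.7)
The natural approach is to proceed via the O'Nan-Scott theorem, handling each family of finite primitive groups in turn and exploiting the hypothesis that $H$ is soluble to rule out or sharply restrict most types.

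If $G$ is of affine (HA) type then $G$ itself is soluble, and Seress's theorem \cite{Seress} gives $b(G,H) \leqs 4$ immediately. If $G$ is of diagonal or holomorph type (SD, CD, HS or HC), the standard description of the point stabiliser exhibits a nonabelian simple section coming from the socle, contradicting solubility, so these types do not arise. In the product action case $G \leqs L \wr S_k$ on $\Delta^k$, the projection of $H$ onto the base group forces the component point stabiliser $L_\delta$ to be soluble, so one reduces to the almost simple case for $(L, L_\delta)$ together with a short combinatorial construction which builds a base for the product action from a base for the component action. The twisted wreath type admits a parallel reduction, as solubility of the coupling group in $S_k$ leaves only a short list of possibilities to handle directly.

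The heart of the argument is the almost simple case. Here I would start from the existing classification of the almost simple groups that possess a soluble maximal subgroup (due to Li--Zhang and others, building on Liebeck--Pyber--Seress), which yields an explicit list of pairs $(G, H)$ organised by the Lie type, or alternating/sporadic type, of the socle. For each family I would then apply the standard probabilistic machinery introduced by Liebeck and Shalev \cite{LSh2}: writing $\mathcal{Q}(G, c)$ for the probability that a random $c$-tuple of points does not form a base, one has
\[
\mathcal{Q}(G,c) \leqs \sum_{x} \frac{|x^G \cap H|^c}{|x^G|^{c-1}},
\]
where $x$ ranges over representatives of $G$-conjugacy classes of elements of prime order. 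Showing $\mathcal{Q}(G, 5) < 1$ forces $b(G,H) \leqs 5$. Feeding the explicit subgroup structure into the fixed-point-ratio estimates of \cite{Bur7, BGS, BLS, BOW, LSh2} should dispose of all but finitely many configurations, with small residual cases settled by direct computation in \textsc{Magma}.

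The main obstacle is the regime where $G$ is almost simple of classical or exceptional Lie type and $H$ is a large soluble maximal subgroup, such as a Borel subgroup or the normaliser of a maximal torus; here $|H|$ can be comparable to $|G|^{1/2}$, so the probabilistic sum is tight and the bound $c = 5$ is essentially sharp. A short explicit list of cases should emerge where $b(G,H) = 5$ is actually attained, confirming that the bound in Theorem~\ref{t:main} cannot be improved and matching the ``best possible'' assertion in the abstract.
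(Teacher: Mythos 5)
Your high-level decomposition matches the paper: Li--Zhang's classification shows that the only O'Nan--Scott types arising with soluble point stabiliser are affine, almost simple and product action (the diagonal, holomorph and twisted wreath types are excluded precisely because a nonabelian simple group embeds in the stabiliser), affine groups are handled by Seress, almost simple groups by fixed point ratio estimates plus \textsc{Magma}, and product-type groups by a reduction to the almost simple component.

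The genuine gap is in the product-type reduction, which you describe as ``a short combinatorial construction which builds a base for the product action from a base for the component action.'' The natural such construction---take a base for the component action in each coordinate, then add one more coordinate whose entries distinguish the $m$ factors---yields
\[
b(G,H) \leqs b(L,K) + \left\lceil \frac{\lceil \log d(P) \rceil}{\lfloor \log |\Gamma| \rfloor}\right\rceil,
\]
where $d(P)$ is the distinguishing number of the top group $P \leqs S_m$. Since there exist almost simple $(L,K)$ in the list with $b(L,K)=5$ (namely $L=S_8$ with $K=S_4\wr S_2$, and the parabolic actions of socle ${\rm L}_4(3)$ and ${\rm U}_5(2)$), and $d(P)$ can equal $5$ for soluble $P$, this crude estimate only gives $b(G,H) \leqs 6$ in the worst cases. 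To recover $5$, the paper invokes the sharper equivalence (Bailey--Cameron, Theorem 2.13 of \cite{BC}) that $b(G,H) \leqs c$ if and only if $L$ has at least $d(P)$ regular orbits on $\Gamma^c$, and then verifies by explicit computation that ${\rm reg}(L,5) \geqs 5$ for each of the three extremal component actions (and also handles $L=S_5$ with $K=S_4$ separately, since there $|\Gamma|=5$ is too small for the crude bound). Without this regular-orbit argument your reduction does not close the gap between $5$ and $6$.

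One smaller point: twisted wreath products do not require a ``parallel reduction''---they simply do not occur, since the point stabiliser of a twisted wreath primitive group contains a section mapping onto a group between ${\rm Inn}(T)$ and ${\rm Aut}(T)$ for the nonabelian simple $T$, which is never soluble.
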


The upper bound in Theorem \ref{t:main} is best possible and there are infinitely many groups that attain the bound. For example, if we take $G = S_5 \wr C_m$ in its product action on $5^m$ points, then $H = S_4 \wr C_m$ is soluble and $b(G,H) = 5$ for all $m \geqs 2$ (see Remark \ref{r:best}).

As a consequence of the O'Nan-Scott theorem, the primitive groups $G \leqs {\rm Sym}(\O)$ with soluble point stabilisers can be divided into three families: affine, almost simple and product-type groups of the form $T^r \normeq G \leqs L \wr S_r$, where $L \leqs {\rm Sym}(\Gamma)$ is an almost simple primitive group with socle $T$ and soluble point stabiliser and the action of $G$ on $\O = \Gamma^r$ is the product action. Moreover, Li and Zhang \cite{LZ} have determined all the almost simple primitive groups with this property, which relies on the extensive literature on maximal subgroups of almost simple groups. 

Suppose $G \leqs {\rm Sym}(\O)$ is almost simple and primitive with a soluble point stabiliser $H$. If $G$ is non-standard, then the proof of Cameron's conjecture yields $b(G,H) \leqs 6$ and the subsequent refinement in \cite{Bur18} gives $b(G,H) \leqs 5$ (in fact, by the main results in \cite{BGS,BOW}, the exact base size is known for all non-standard groups with socle an alternating or sporadic group). There are only partial results in the literature on base sizes for standard groups (for example, see \cite{BCN,BGL,Halasi,James, MS} for some results on bases for standard groups with an alternating socle). However, the soluble stabiliser hypothesis is rather restrictive and we can reduce the analysis of standard groups to symmetric and alternating groups of small degree and groups of Lie type of low rank (typically defined over small fields). These groups are amenable to direct calculation and we are able to determine the exact base size of every almost simple primitive group with a soluble point stabiliser. In particular, we can identify all of the groups with $b(G,H)=2$ and so this brings us a step closer towards a classification of the finite primitive groups with a base of size two, which is an ambitious project initiated by Jan Saxl in the 1990s.

Our main result for almost simple groups is Theorem \ref{t:as} below. In part (i)(b,c), we use the standard $P_m$ notation for maximal parabolic subgroups of classical groups; this is the stabiliser in $G$ of an $m$-dimensional totally isotropic subspace of the natural module for $G_0$. The tables referred to in part (ii) are presented at the end of the paper in Section \ref{s:tables} (see Remarks \ref{r:cases} and \ref{r:class} for information on the conventions adopted in these tables).

\begin{theorem}\label{t:as}
Let $G \leqs {\rm Sym}(\O)$ be a finite almost simple primitive group with socle $G_0$ and soluble point stabiliser $H$. Let $b=b(G,H)$ be the base size of $G$.
\begin{itemize}\addtolength{\itemsep}{0.2\baselineskip}
\item[{\rm (i)}] We have $b \leqs 5$, with equality if and only if one of the following holds:

\vspace{1mm}

\begin{itemize}\addtolength{\itemsep}{0.2\baselineskip}
\item[{\rm (a)}] $G = S_8$ and $H = S_4 \wr S_2$;
\item[{\rm (b)}] $G_0 = {\rm L}_{4}(3)$ and $H = P_2$;
\item[{\rm (c)}] $G_0 = {\rm U}_{5}(2)$ and $H = P_1$. 
\end{itemize}

\item[{\rm (ii)}] We have $b>2$ if and only if $(G,H,b)$ is one of the cases recorded in Tables \ref{tab:as1}--\ref{tab:as4}.
\end{itemize}
\end{theorem}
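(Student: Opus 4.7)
The plan is to exploit the classification of Li and Zhang \cite{LZ} of all almost simple primitive $(G,H)$ with $H$ soluble, and then split the analysis into non-standard and standard cases in the sense defined just before the statement. Both the bound $b \leqs 5$ in (i) and the detailed tables in (ii) are obtained by combining existing probabilistic results with a carefully organised case analysis of a comparatively short list of families.

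For the non-standard groups, the proof of Cameron's conjecture in \cite{Bur7,BGS,BLS,BOW} already gives $b(G,H) \leqs 6$, and the refinement in \cite{Bur18} gives $b(G,H) \leqs 5$ with the equality cases explicitly classified. I would first intersect that classification with the Li--Zhang list to show that no non-standard $(G,H)$ with soluble $H$ attains $b = 5$, so every example in part (i) is standard. For the remaining non-standard entries, the exact base size (that is, whether $b \in \{2,3,4\}$) is already determined in \cite{BGS,BOW} when the socle is alternating or sporadic, and I would simply extract those values for the tables in (ii). For non-standard groups of Lie type, I would combine the fixed point ratio bounds used in \cite{Bur7,BLS} with the soluble-stabiliser restriction to see that, apart from a short list of small cases handled by direct computation, a random $4$-tuple of points is almost surely a base.

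The main work concerns the standard groups, namely $G_0 = A_m$ acting on $k$-subsets or uniform partitions of $\{1,\dots,m\}$, and classical $G_0$ with $H$ a parabolic $P_m$ or a non-degenerate subspace stabiliser. Here the soluble stabiliser hypothesis is extremely restrictive: in the alternating case it quickly forces $m$ to be very small, and in the classical case the Levi factor (or the subgroup stabilising a non-degenerate complement) typically contains a non-abelian simple section unless the rank and the field are both small. A short structural analysis narrows the possibilities to finitely many infinite families (for instance $P_1$-actions of $\mathrm{L}_n(q)$ or $\mathrm{U}_n(q)$ for very small $n,q$, Borel-type actions in rank-two groups, and a few orthogonal subspace actions) together with finitely many sporadic cases. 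In each family I would construct an explicit candidate base of size $4$ or $5$ and verify triviality of its pointwise stabiliser, and then prove minimality either by an order argument ($|H|^{b-1} < |G|$) or by ruling out bases of size $b-1$ via a fixed point ratio inequality of the form $\sum_{x} \mathrm{fpr}_\Omega(x)^{b-1} < 1$, the sum being over a set of representatives of non-identity prime-order elements of $H$. This is exactly the argument that pins down the three extremal configurations (i)(a)--(c): the partition action of $S_8$ on $(4,4)$-partitions, the $P_2$-action of $\mathrm{L}_4(3)$ on lines-in-$\mathbb{F}_3^4$, and the $P_1$-action of $\mathrm{U}_5(2)$.

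The main obstacle will be the bookkeeping of this standard-group analysis: although the list is finite up to a few parameters, each subspace action requires its own calculation of $|H|$, its conjugacy class structure, and tight fixed point ratio estimates, and borderline cases have to be resolved by direct computation (\textsc{Magma} random searches for small bases, combined with exhaustive verification when the degree is manageable). A particular subtlety is separating the $b=2$ cases cleanly from $b=3$ cases for Tables \ref{tab:as1}--\ref{tab:as4}, since the probabilistic bounds lose precision at this level and an honest construction of a regular pair (or a proof that none exists) is needed in each borderline case. Once these calculations are assembled, part (ii) is obtained by collating the outputs into the tables, and part (i) follows immediately since only the three configurations listed saturate the bound.
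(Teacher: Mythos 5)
Your high-level plan — use Li--Zhang to enumerate the possible $(G,H)$, then combine the Cameron-conjecture machinery for non-standard $(G,H)$ with fixed point ratio estimates and \textsc{Magma} for the remaining standard cases — is essentially the strategy the paper follows, though the paper organises the case analysis by socle type (alternating/sporadic, $\mathrm{L}_2(q)$, parabolic actions of Lie type, non-parabolic classical, non-parabolic exceptional) rather than by the standard/non-standard dichotomy. That re-ordering is largely a matter of bookkeeping, and your observation that all three $b=5$ cases are standard and that Borel- and low-rank subspace actions dominate the standard list is correct.

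There is, however, a genuine conceptual error in your stated method for lower bounds. You propose to prove minimality ``either by an order argument ($|H|^{b-1}<|G|$) or by ruling out bases of size $b-1$ via a fixed point ratio inequality of the form $\sum_x \mathrm{fpr}_\Omega(x)^{b-1}<1$.'' Both halves of this are wrong. The correct order argument (Lemma~\ref{l:easy}) is $|G|>|\Omega|^{b-1}\Rightarrow b(G,H)\geqslant b$; your inequality $|H|^{b-1}<|G|$ is not equivalent to this and does not imply any base size lower bound (rewriting, the correct condition is $|H|^{b-1}>|G|^{b-2}$). More seriously, the fixed point ratio inequality runs the other way: $\mathcal{Q}(G,c)<1$ \emph{guarantees} a base of size $c$ exists, so $\sum_x \mathrm{fpr}_\Omega(x)^{b-1}<1$ gives $b(G,H)\leqslant b-1$ — the opposite of what you want. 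The probabilistic method is inherently one-directional; $\mathcal{Q}(G,c)\geqslant 1$ tells you nothing, and there is no way to extract a lower bound on $b(G,H)$ from these sums. In practice this matters: for the extremal case $G=S_8$, $H=S_4\wr S_2$ one has $\lceil \log|G|/\log|\Omega|\rceil=3<5=b(G,H)$, so even the correct order bound fails by two, and the paper must resort to an exhaustive check (via \texttt{CosetAction} and, in another case, double coset enumeration) that every $4$-point stabiliser is nontrivial. Your proposal does gesture at ``exhaustive verification when the degree is manageable,'' which is the right fallback, but the two explicit lower bound tools you name would not close the three $b=5$ cases, and one of them is pointed in the wrong direction entirely.
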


Let us record some immediate corollaries.

\begin{corol}\label{c:odd}
Let $G \leqs {\rm Sym}(\O)$ be a finite almost simple primitive group with socle $G_0$ and point stabiliser $H$. Suppose $|H|$ is odd and $b(G,H) > 2$. Then  $G_0 = {\rm L}_{2}(q)$, $q \equiv 3 \imod{4}$, $|G:G_0|$ is odd, $H = P_1$ is a Borel subgroup and 
\[
b(G,H) = \left\{\begin{array}{ll}
4 & \mbox{if $G \ne G_0$} \\
3 & \mbox{otherwise.}
\end{array}\right.
\]
\end{corol}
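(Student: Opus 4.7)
The plan is to reduce the statement to an inspection of the tables referenced in Theorem \ref{t:as}. First I would invoke the Feit--Thompson Odd Order Theorem: since $|H|$ is odd, $H$ is soluble, so the hypotheses of Theorem \ref{t:as} are satisfied. Because $b(G,H) > 2$, the triple $(G,H,b)$ must appear as one of the entries in Tables \ref{tab:as1}--\ref{tab:as4}. Hence the corollary follows once we identify exactly which of these entries have $|H|$ odd.

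Next I would go through the four tables family by family (alternating, sporadic, exceptional, classical) and rule out all entries with a point stabiliser of even order. I expect the alternating and sporadic cases to be eliminated quickly, since the listed soluble stabilisers there (direct products with a symmetric factor, wreath-type constructions, normalisers of elementary abelian groups of even order, etc.) visibly have even order. For the exceptional groups the listed soluble maximal subgroups (Borel subgroups, torus normalisers and so on) always contain an involution. So the task reduces to the classical table.

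Inside the classical table the key observation is that a maximal soluble subgroup of odd order in an almost simple classical group is extremely restrictive. For groups of Lie type in characteristic $2$, any Borel or parabolic subgroup contains the unipotent radical of even order, so such $H$ are excluded; the remaining soluble maximals (field extension subgroups, $C_6$-type subgroups, $\mathrm{GL}_1$-normalisers, etc.) also contain involutions. In odd characteristic, one checks that the only surviving family is $G_0 = \mathrm{L}_2(q)$ with $H = P_1$ the Borel subgroup of order $q(q-1)/2$, and this is odd precisely when $q \equiv 3 \imod{4}$. The outer condition $|G:G_0|$ odd then drops out because any diagonal or involutory field/graph automorphism enlarges $H$ by an element of order $2$.

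Finally, once the family is pinned down, the values of $b(G,H)$ are just read off from the corresponding row of Table \ref{tab:as4}: the table records $b = 4$ when a nontrivial (necessarily odd-order) field automorphism is present (so $G \ne G_0$), and $b = 3$ in the socle case $G = G_0$. The main obstacle is purely bookkeeping: one has to scan every row of the classical table and confirm that no parabolic or non-parabolic soluble maximal other than the Borel of $\mathrm{L}_2(q)$ with $q \equiv 3 \imod 4$ can have odd order; this uses the structural descriptions of $H$ given in the references \cite{LZ} underlying Theorem \ref{t:as}.
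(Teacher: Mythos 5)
Your approach is correct and coincides with the paper's, which disposes of the corollary ``by inspection'' of the propositions and tables underlying Theorem \ref{t:as}; the Feit--Thompson reduction to soluble $H$, the table scan, and the arithmetic that forces $q\equiv 3\pmod 4$ and $|G:G_0|$ odd are all what the paper has in mind. One small slip: the $G_0={\rm L}_2(q)$, $H=P_1$ entry sits in Table \ref{tab:as3} (parabolic actions), not Table \ref{tab:as4}, and the final dichotomy $b=3$ iff $G=G_0$ comes from Remark \ref{r:class}(ii) (equivalently Lemma \ref{l:l2p1}), after noting that with $|G:G_0|$ odd the conditions ``$G\leqs{\rm PGL}_2(q)$'' and ``$G=\langle G_0,\delta\phi^{f/2}\rangle$'' both force $G=G_0$.
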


\begin{corol}\label{c:nilp}
Let $G \leqs {\rm Sym}(\O)$ be a finite almost simple primitive group with socle $G_0$ and point stabiliser $H$. Suppose $H$ is nilpotent and $b(G,H)>2$. Then $b(G,H) = 3$ and either
\begin{itemize}\addtolength{\itemsep}{0.2\baselineskip}
\item[{\rm (i)}] $G = {\rm Aut}(A_6)$ and $H$ is a Sylow $2$-subgroup of $G$; or 
\item[{\rm (ii)}] $G = {\rm PGL}_{2}(q)$, $q$ is a Mersenne prime and $H = D_{2(q+1)}$ is a Sylow $2$-subgroup of $G$.
\end{itemize}
\end{corol}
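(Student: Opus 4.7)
The strategy is to apply Theorem \ref{t:as}(ii): since any nilpotent group is soluble, every pair $(G,H)$ with $b(G,H) > 2$ must appear in Tables \ref{tab:as1}--\ref{tab:as4}, and the task is to sift through these tables and single out those for which the point stabiliser $H$ happens to be nilpotent. As a bonus, the values of $b(G,H)$ for the surviving cases can then be read off directly from the tables, so there will be no need to do any additional base size computations.

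First I would recall that a finite group is nilpotent if and only if it is the direct product of its Sylow subgroups. This imposes a strong structural constraint: if two distinct primes $p,q$ divide $|H|$, then the Sylow $p$- and Sylow $q$-subgroups must both be normal in $H$. Most of the stabilisers appearing in the tables arise as Borel subgroups of groups of Lie type, maximal parabolic subgroups $P_m$, dihedral groups $D_{2m}$ with $m$ having an odd prime factor, normalisers of non-toral tori, wreath products such as $S_4 \wr S_2$, or groups of shape $L_2(q)$ itself, and each of these can quickly be ruled out. For example, a dihedral group $D_{2m}$ is nilpotent exactly when $m$ is a power of $2$, any Borel subgroup $B = U{:}T$ of a simple group of Lie type defined over $\mathbb{F}_q$ with $q>2$ has its unipotent radical $U$ and torus $T$ of coprime orders without $T$ being normal, and so on. I would walk through the four tables row by row and discard each entry whose stabiliser fails this elementary test.

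After this elimination the only survivors should be those identified in the statement. For item (i), the case $G = {\rm Aut}(A_6) = {\rm P\Gamma L}_2(9)$ has a maximal subgroup of order $16$, which is necessarily a Sylow $2$-subgroup of $G$ since $|G| = 2^5 \cdot 3^2 \cdot 5$; this pair appears in the relevant table with $b(G,H)=3$. For item (ii), among the socles of type ${\rm L}_2(q)$, the only maximal subgroup that can be a $p$-group of nilpotency class at least one is $H = D_{2(q+1)}$, and this is nilpotent precisely when $q+1$ is a power of $2$, forcing $q$ to be a Mersenne prime (and then necessarily $q \equiv 3 \imod{4}$, so the maximal subgroup actually of shape $D_{2(q+1)}$ occurs for $G = {\rm PGL}_2(q)$ rather than $G_0$ or the larger almost simple overgroups, in which case $H$ is not of this exact dihedral shape). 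The corresponding table entry again gives $b(G,H)=3$.

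The main obstacle will be the bookkeeping: one has to be sure that every row of every table has been considered and that no nilpotent stabiliser has been overlooked. Particular care is needed for the numerous ${\rm L}_2(q)$ entries, where one must match the value of $q$ against the specific isomorphism type of $H$ listed in the table (Borel, torus normaliser, $A_4$, $S_4$, $A_5$, $D_{2(q\pm 1)}$, etc.) and verify that only the $D_{2(q+1)}$ case with $q$ a Mersenne prime, combined with $G = {\rm PGL}_2(q)$, yields a nilpotent point stabiliser. Since the tables are finite and completely explicit, however, this verification reduces to a routine, if somewhat tedious, inspection.
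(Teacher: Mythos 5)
Your approach is exactly what the paper has in mind: the statement is derived by inspection of Tables \ref{tab:as1}--\ref{tab:as4} (using the fact that nilpotent implies soluble, so Theorem \ref{t:as}(ii) applies), and the two surviving rows are the ones you identify. One small remark: the reason the socle-only case $G = G_0 = {\rm L}_2(q)$ with $H_0 = D_{q+1}$ does not appear is not that $H_0$ fails to be dihedral (it is, and for $q$ a Mersenne prime it is even a nilpotent $2$-group), but that Lemma \ref{l:l2c3} gives $b(G_0,H_0) = 2$ there, so the pair never enters the tables; only $G = {\rm PGL}_2(q)$ satisfies the table condition ${\rm PGL}_2(q) \leqs G$, and since $q$ is prime no larger almost simple overgroup exists.
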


In the statement of the next result, we exclude the groups with socle $G_0 = {}^2G_2(3)' \cong {\rm L}_{2}(8)$ (here $b(G,H) \leqs 4$, with equality if and only if $G = {}^2G_2(3)$ and $H = 2^3{:}7{:}3$). See 
Table \ref{tab:as2} for a complete list of the exceptional groups with 
$b(G,H)=3$. 

\begin{corol}\label{c:ex}
Let $G \leqs {\rm Sym}(\O)$ be a finite almost simple primitive group with socle $G_0$ and soluble point stabiliser $H$. If $G_0$ is an exceptional group of Lie type, then $b(G,H) \leqs 3$, with equality only if $p \in \{2,3\}$ and $H$ is a parabolic subgroup.
\end{corol}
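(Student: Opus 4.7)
The plan is to derive the corollary from Theorem \ref{t:as} together with an inspection of the rows of Tables \ref{tab:as1}--\ref{tab:as4} whose socle $G_0$ is an exceptional group of Lie type, other than the excluded isomorphism type ${}^2G_2(3)' \cong {\rm L}_2(8)$.

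First, Theorem \ref{t:as}(i) gives $b(G,H) \leq 5$ with equality only in the three cases (i)(a,b,c), all of which have alternating, linear or unitary socle. Hence for exceptional $G_0$ we already have $b(G,H) \leq 4$. To eliminate the value $b(G,H) = 4$, I would appeal to Theorem \ref{t:as}(ii): every pair with $b(G,H) > 2$ appears in Tables \ref{tab:as1}--\ref{tab:as4}, so it suffices to verify that no row with exceptional $G_0$ (not isomorphic to ${\rm L}_2(8)$) has base size $4$. The remark preceding the corollary flags the only $b=4$ case of exceptional type, namely $G = {}^2G_2(3)$ with $H = 2^3{:}7{:}3$, and this is precisely the excluded socle. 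This already yields the upper bound $b(G,H) \leq 3$.

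For the ``only if'' clause I would turn to Table \ref{tab:as2}, which the author has highlighted as the complete list of exceptional groups with $b(G,H) = 3$. One then checks, row by row, that in each listed triple $(G,H,3)$ the subgroup $H$ is a maximal parabolic subgroup of $G$ and that the defining characteristic of $G_0$ is $2$ or $3$; both properties can be read off directly from the structural description of $H$ and the family of $G_0$ recorded in the table.

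The hard part here is not the corollary itself but Theorem \ref{t:as}, on which it rests. For exceptional-type socles this requires the Li--Zhang classification \cite{LZ} of the almost simple primitive groups with a soluble point stabiliser, which severely restricts the possibilities for $(G_0, H)$, followed by a case-by-case determination of $b(G,H)$ for each surviving pair. The second step is where genuine effort is needed: for the small-rank families, especially $G_2(q)$, ${}^3D_4(q)$, ${}^2B_2(q)$, ${}^2G_2(q)$ and ${}^2F_4(q)$ over small $q$, the generic probabilistic bounds from fixed-point-ratio estimates can be borderline, so I would expect a blend of explicit character-theoretic arguments and direct computation in \textsc{Magma} to pin down the exact base size and thereby confirm the characterisation recorded in Table \ref{tab:as2}.
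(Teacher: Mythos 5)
Your proof is correct and takes essentially the same route as the paper: in Section 7 the author simply observes that Corollaries \ref{c:odd}, \ref{c:nilp} and \ref{c:ex} ``follow by inspection'' of the propositions establishing Theorem \ref{t:as}, and your argument is exactly that inspection made explicit, deducing $b(G,H)\leqs 4$ from Theorem \ref{t:as}(i), ruling out $b=4$ for exceptional socle other than the excluded ${}^2G_2(3)'$ via Theorem \ref{t:as}(ii) and Table \ref{tab:as2}, and reading off from Table \ref{tab:as2} that every $b=3$ case has $p\in\{2,3\}$ and parabolic $H$.
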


The proof of Theorem \ref{t:as} combines probabilistic and computational methods. Given a positive integer $c$ and a permutation group $G$ on a finite set $\O$, let 
\begin{equation}\label{e:pgc}
\mathcal{P}(G,c) = \frac{|\{(\a_1, \ldots, \a_c) \in \O^c \,:\, \bigcap_i G_{\a_i} = 1\}|}{|\O|^c}
\end{equation}
be the probability that a randomly chosen $c$-tuple of points in $\O$ forms a base for $G$. As in the proof of Cameron's base size conjecture, we can use fixed point ratios to estimate $\mathcal{P}(G,c)$, noting that $b(G,H) \leqs c$ if and only if $\mathcal{P}(G,c)>0$. In this way, we can establish the following asymptotic result for almost simple primitive groups.

\begin{theorem}\label{t:prob}
Let $(G_n)$ be a sequence of finite almost simple primitive permutation groups with soluble point stabilisers such that $|G_n| \to \infty$ as $n \to \infty$. 
\begin{itemize}\addtolength{\itemsep}{0.2\baselineskip}
\item[{\rm (i)}] We have $\mathcal{P}(G_n,4) \to 1$ as $n \to \infty$.
\item[{\rm (ii)}] Moreover, either $\mathcal{P}(G_n,3) \to 1$ as $n \to \infty$, or there exists an infinite subsequence of groups with socle ${\rm L}_{2}(q)$ and degree $q+1$. 
\end{itemize}
\end{theorem}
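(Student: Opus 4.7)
\medskip

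\noindent\textit{Proof plan.} I would use the probabilistic method via fixed point ratios, exactly as in the proof of Cameron's conjecture. For a finite permutation group $G$ on $\Omega$ and a positive integer $c$, set
\[
Q(G,c) \,=\, \sum_{x} |x^G| \cdot \mathrm{fpr}(x,\Omega)^c,
\]
where the sum ranges over representatives of the conjugacy classes of elements of prime order in $G$ and $\mathrm{fpr}(x,\Omega) = |\mathrm{fix}_\Omega(x)|/|\Omega|$. The standard union bound (using that every non-identity element has a power of prime order with at least as large a fixed-point set) gives $1 - \mathcal{P}(G,c) \leqs Q(G,c)$. Thus (i) reduces to showing $Q(G_n,4) \to 0$, and (ii) reduces to showing that $Q(G_n,3) \to 0$ unless we can extract an infinite subsequence of $L_2(q)$-on-$(q+1)$-points groups.

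The argument is a case analysis over the families of almost simple primitive groups with soluble point stabiliser supplied by Li and Zhang \cite{LZ}, split into non-standard and standard cases. For non-standard groups, the sharp fpr estimates of the form $\mathrm{fpr}(x,\Omega) \leqs |x^G|^{-1/2-\epsilon}$ developed in \cite{Bur7,BGS,BLS,BOW} for Cameron's conjecture immediately yield $Q(G,c) \to 0$ for $c=4$, and also for $c=3$ outside finitely many small exceptions that do not affect the asymptotic. For standard groups, the soluble stabiliser hypothesis forces the Levi factor of the parabolic $H$ to be soluble, restricting the analysis to low-rank classical groups---in particular the $L_2(q)$ actions of degree $q+1$---together with small-index subset and partition actions of $S_n$ and $A_n$; in each such family $Q(G,c)$ is estimated by collecting contributions from the Aschbacher classes of prime-order elements, using the same kind of fpr bounds tailored to the parabolic action.

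The main obstacle, which accounts for the exception in (ii), is the contribution of field automorphisms in the $L_2(q)$ family of degree $q+1$. If $G$ contains a field automorphism $x$ of prime order $r$, then $q = q_0^r$ and $\mathrm{fix}_\Omega(x) = \mathbb{P}^1(\mathbb{F}_{q_0})$, so $\mathrm{fpr}(x,\Omega) \sim q^{-(r-1)/r}$ while $|x^G| \sim q^{3(r-1)/r}$. The contribution of this class to $Q(G,c)$ is therefore of order $q^{(3-c)(r-1)/r}$: of order $1$ when $c=3$, and $q^{-(r-1)/r} \to 0$ when $c=4$. This is precisely the asymmetry that lets part (i) hold uniformly while forcing the exceptional subsequence in part (ii). Outside of the $L_2(q)$-on-$(q+1)$-points family, every other potential source of a class with large $|x^G| \cdot \mathrm{fpr}(x,\Omega)^3$ is suppressed by the soluble stabiliser condition (which kills most outer-automorphism classes with many fixed points), so $Q(G_n,3) \to 0$ in the complementary case and the stated dichotomy follows.
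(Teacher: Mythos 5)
Your overall strategy matches the paper's: estimate $\mathcal{Q}(G,c)$ via the probabilistic method, perform a case analysis over the Li--Zhang classification of almost simple primitive groups with soluble stabiliser, and isolate the $\mathrm{L}_2(q)$ action on $q+1$ points as the source of the exceptional subsequence in part (ii). Your asymptotic analysis of the field-automorphism contribution in that case --- $\mathrm{fpr}(x,\Omega) \sim q^{-(r-1)/r}$, $|x^G| \sim q^{3(r-1)/r}$, hence a contribution of order $q^{(3-c)(r-1)/r}$ to $\mathcal{Q}(G,c)$ --- is exactly right and matches the computation of $\a_4$ in the proof of Lemma 4.4.

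However, there is a genuine gap in the mechanism you propose for the non-$\mathrm{L}_2(q)$ cases. You assert that the Cameron-conjecture machinery of \cite{Bur7,BGS,BLS,BOW} supplies fixed point ratio bounds of the form $\mathrm{fpr}(x,\Omega) \leqs |x^G|^{-1/2-\epsilon}$ and that these ``immediately'' give $\mathcal{Q}(G,c) \to 0$ for $c = 3,4$. No such bound exists in that literature: the bounds used for Cameron's conjecture are of the shape $|x^G|^{-\a}$ with $\a$ roughly $4/9$ (and sometimes worse), and this is precisely why the Cameron--Kantor argument yields $\mathcal{P}(G,6) \to 1$ rather than $\mathcal{P}(G,3) \to 1$. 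If a uniform $|x^G|^{-1/2-\epsilon}$ bound were available, the proof of Cameron's conjecture would have given $b(G,H) \leqs 4$ outright. What actually makes $c = 3$ (and even $c = 2$ in many families) accessible in the paper is the soluble stabiliser hypothesis itself: since $|H|$ is tiny relative to $|G|$, one can bound $|x^G \cap H|$ directly by counting prime-order elements in $H$ (rather than appealing to generic fpr estimates), and this is what Lemma 2.4 formalises. Your plan would need to be reorganised around this idea. Two smaller imprecisions worth noting: the soluble stabiliser condition for alternating socles of large degree leaves only the single family $n = p$ prime with $H = \mathrm{AGL}_1(p) \cap G$, not general small-index subset or partition actions; and ``Aschbacher classes'' of prime-order elements conflates the Aschbacher subgroup classification with conjugacy classes of elements, which are the objects actually being summed over in $\mathcal{Q}(G,c)$.
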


Let $G$ be a finite group, let $H$ be a soluble subgroup of $G$ and assume $G$ has no nontrivial soluble normal subgroups (so we may view $G$ as a transitive permutation group on the set of cosets of $H$). In this general setting, there is a conjecture attributed to Babai, Goodman and Pyber (cf. Conjecture 6.6 in \cite{BGP}) which asserts that  $b(G,H) \leqs 7$ (see Problem 17.41(a) in the Kourovka notebook \cite{Kou}; also see \cite[Problem 1]{Vdovin2}). By Theorem \ref{t:main}, this conjecture holds when $H$ is a maximal subgroup of $G$. In fact, the stronger bound $b(G,H) \leqs 5$ has been conjectured by Vdovin in \cite[Problem 17.41(b)]{Kou} and once again, our main theorem shows that this holds when $H$ is maximal. However, the general problem for non-maximal subgroups is still open.

In \cite{Vdovin2}, Vdovin essentially reduces his general conjecture to the almost simple groups and here there has been progress in some special cases. For example, Baikalov \cite{Bay2} has proved the conjecture for all soluble subgroups of symmetric and alternating groups and there are some partial results for groups of Lie type in \cite{Bay1, Vdovin3}. 

\vs

\noindent \textbf{Notation.} Let $G$ be a finite group and let $n$ be a positive integer. We will write $C_n$, or just $n$, for a cyclic group of order $n$ and $G^n$ will denote the direct product of $n$ copies of $G$. An unspecified extension of $G$ by a group $H$ will be denoted by $G.H$; if the extension splits then we write $G{:}H$. We use $[n]$ for an unspecified soluble group of order $n$. If $X$ is a subset of $G$, then $i_n(X)$ is the number of elements of order $n$ in $X$. We adopt the standard notation for simple groups of Lie type from \cite{KL}. In particular we write ${\rm L}_{n}^{\e}(q)$ for ${\rm PSL}_{n}(q)$ (when $\e=+$) and ${\rm PSU}_{n}(q)$ (when $\e=-$). The simple orthogonal groups are denoted ${\rm P\O}_{n}^{\e}(q)$, which differs from the notation used in the \textsc{Atlas} \cite{Atlas}. For positive integers $a$ and $b$, we write $(a,b)$ for the greatest common divisor of $a$ and $b$, while $\delta_{a,b}$ denotes the familiar Kronecker delta. All logarithms in this paper are base two.

\vs

\noindent \textbf{Organisation.} Let us say a few words on the layout of the paper. In Section \ref{s:prel} we discuss the probabilistic and computational methods that play a central role in the proofs of our main results. In Sections \ref{s:altsp}--\ref{s:excep}, which comprises the main bulk of the paper, we present proofs of Theorems \ref{t:as} and \ref{t:prob}, with the cases organised according to the possibilities for the socle $G_0$ and point stabiliser $H$ of $G$. The groups with socle an alternating or sporadic group are handled in Section \ref{s:altsp}. The two-dimensional linear groups with $G_0 = {\rm L}_{2}(q)$ require special attention and they are treated in Section \ref{s:psl2}. The remaining groups of Lie type are studied in Sections \ref{s:parab}--\ref{s:excep}, with the special cases where $H$ is a parabolic subgroup featuring in Section \ref{s:parab}. Finally, in Section \ref{s:main} we consider the affine and product-type primitive groups with soluble stabilisers and we combine Theorem \ref{t:as} with work of Seress \cite{Seress} to complete the proof of Theorem \ref{t:main}. The tables referred to in the statement of Theorem \ref{t:as} are presented in Section \ref{s:tables}.

\section{Preliminaries}\label{s:prel}

In this section we discuss some of the probabilistic and computational methods for calculating base sizes. These techniques will be applied repeatedly in the proofs of our main results.

\subsection{Bases}\label{ss:bases}

Let $G \leqs {\rm Sym}(\O)$ be a transitive permutation group on a finite set $\O$ with point stabiliser $H$. Let $b(G,H)$ denote the base size of $G$. As noted in Section \ref{s:intro}, the definition of a base implies that the elements of $G$ are distinguished by their action on a base and thus $|G| \leqs |\O|^{b(G,H)}$. This gives us the following useful lower bound on $b(G,H)$.

\begin{lem}\label{l:easy}
We have 
\[
b(G,H) \geqs \left\lceil \frac{\log |G|}{\log |\O|} \right\rceil.
\]
\end{lem}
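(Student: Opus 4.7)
The plan is to exploit the fact, already noted in the paragraph preceding the lemma, that a base distinguishes the elements of $G$. Specifically, if $B=\{\a_1,\ldots,\a_b\} \subseteq \O$ is a base for $G$ of size $b=b(G,H)$, then no nontrivial element of $G$ fixes every point of $B$, and this will translate directly into a numerical bound on $|G|$ in terms of $|\O|$ and $b$.

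First I would make the distinguishing property precise by defining the evaluation map
\[
\varphi \: G \to \O^{b}, \quad g \mapsto (\a_1^{g}, \ldots, \a_b^{g}).
\]
The key observation is that $\varphi$ is injective: if $\varphi(g) = \varphi(h)$, then $gh^{-1}$ fixes each $\a_i$, so $gh^{-1}$ lies in the pointwise stabiliser of $B$, which is trivial by the defining property of a base, forcing $g=h$.

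Injectivity yields $|G| \leqs |\O|^{b}$, and taking logarithms to base two gives $b \geqs \log |G|/\log |\O|$. Since $b$ is a nonnegative integer, we may replace the right-hand side by its ceiling, which produces the stated bound.

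There is essentially no obstacle here: the argument is a one-line pigeonhole observation, and the only subtlety is the standard rounding step justifying the ceiling. The lemma is recorded separately only because the inequality will be invoked repeatedly throughout the paper as a convenient lower bound to compare against the upper bounds produced by the probabilistic and computational methods developed in the remainder of Section~\ref{s:prel}.
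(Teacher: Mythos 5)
Your argument is correct and is precisely the one the paper intends: the remark immediately preceding the lemma notes that elements of $G$ are distinguished by their action on a base, giving $|G| \leqs |\O|^{b(G,H)}$, from which the stated bound follows by taking logarithms and ceilings. You have simply spelled out the injectivity of the evaluation map $g \mapsto (\a_1^g,\ldots,\a_b^g)$, which the paper leaves implicit.
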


In order to determine an upper bound $b(G,H) \leqs c$ we can either adopt a constructive approach with the aim of exhibiting a base of size $c$, or we can try to estimate the probability $\mathcal{P}(G,c)$ that a randomly chosen $c$-tuple in $\O$ forms a base for $G$ (see \eqref{e:pgc}), noting that $b(G,H) \leqs c$ if and only if $\mathcal{P}(G,c) > 0$. We will use both approaches in this paper, but we will predominantly seek to apply the probabilistic method whenever it is feasible to do so.

Probabilistic methods for studying bases were originally introduced by Liebeck and Shalev \cite{LSh2} in their proof of the Cameron-Kantor conjecture. The idea is very simple. Let $c$ be a positive integer and observe that $\{\a_1, \ldots, \a_c \} \subseteq \O$ is \emph{not} a base for $G$ if and only if there exists an element $x \in G$ of prime order such that $x \in G_{\a_i}$ for all $i$. Since we can interpret the \emph{fixed point ratio} of $x$, 
\[
{\rm fpr}(x,G/H) = \frac{|C_{\O}(x)|}{|\O|} = \frac{|x^G \cap H|}{|x^G|},
\]
as the probability that $x$ fixes a uniformly random element in $\O$ (here $C_{\O}(x)$ is the set of fixed points of $x$ on $\O$), it follows that  
\[
1 - \mathcal{P}(G,c) \leqs \sum_{x \in \mathcal{P}} {\rm fpr}(x,G/H)^c =: \mathcal{Q}(G,c),
\]
where $\mathcal{P}$ is the set of elements of prime order in $G$. Now  
 $|C_{\O}(x)| = |C_{\O}(x^g)|$ for all $g \in G$, whence 
 \begin{equation}\label{e:Q}
\mathcal{Q}(G,c) = \sum_{i=1}^{k} |x_i^{G}| \cdot \left(\frac{|x_i^{G} \cap H|}{|x_i^{G}|}\right)^c
\end{equation}
where $x_1, \ldots,x_k$ represent the $G$-classes of elements of prime order in $H$. We will repeatedly apply the following lemma.

\begin{lem}\label{l:base}
If $\mathcal{Q}(G,c)<1$ then $b(G,H) \leqs c$.	
\end{lem}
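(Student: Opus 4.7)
The plan is to read off the conclusion directly from the union-bound argument that has already been assembled in the two paragraphs immediately preceding the statement, so there is essentially nothing to do beyond packaging that reasoning as a formal proof.

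First I would observe that a tuple $(\a_1,\ldots,\a_c) \in \O^c$ fails to be a base precisely when the pointwise stabiliser $\bigcap_{i=1}^c G_{\a_i}$ is nontrivial. Since any nontrivial element of a finite group has a power of prime order, this happens if and only if there exists an element $x$ of prime order in $G$ with $x \in G_{\a_i}$ for every $i$. This reduction to prime-order witnesses is the only substantive content; it is a standard trick that is already invoked in the text preceding the lemma.

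Next I would use the interpretation of the fixed point ratio as a probability: for each $x \in G$ of prime order, $\mathrm{fpr}(x,G/H)$ equals the probability that a uniformly random $\a \in \O$ satisfies $x \in G_\a$, so by independence the probability that $x$ fixes every coordinate of a uniformly random $c$-tuple is $\mathrm{fpr}(x,G/H)^c$. A union bound over the set $\mathcal{P}$ of prime-order elements of $G$ then gives
\[
1 - \mathcal{P}(G,c) \;\leqs\; \sum_{x \in \mathcal{P}} \mathrm{fpr}(x,G/H)^c,
\]
and since $|C_\O(x)|$ is constant on $G$-conjugacy classes, grouping the summands into classes recovers the expression $\mathcal{Q}(G,c)$ displayed in \eqref{e:Q}; here only $G$-classes meeting $H$ contribute, because $|x^G \cap H| = 0$ otherwise.

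Finally, under the hypothesis $\mathcal{Q}(G,c) < 1$ we obtain $\mathcal{P}(G,c) > 0$, so at least one $c$-tuple in $\O^c$ has trivial pointwise stabiliser, and hence $b(G,H) \leqs c$. There is no real obstacle: the entire argument is a two-line consequence of the displayed inequality for $1 - \mathcal{P}(G,c)$, and its utility in the rest of the paper will come from bounding $\mathcal{Q}(G,c)$ via explicit fixed point ratio estimates class by class rather than from any subtlety in the lemma itself.
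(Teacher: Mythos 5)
Your proposal is correct and matches the paper's own reasoning exactly: the paper states the lemma without proof precisely because it is an immediate consequence of the union-bound inequality $1-\mathcal{P}(G,c)\leqs\mathcal{Q}(G,c)$ and the equivalence $b(G,H)\leqs c \Leftrightarrow \mathcal{P}(G,c)>0$ set out in the preceding two paragraphs. You have simply packaged that argument into proof form, with no gaps.
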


The following result is \cite[Lemma 2.1]{Bur7}, which provides a useful tool for bounding $\mathcal{Q}(G,c)$.

\begin{lem}\label{l:calc}
Suppose $x_{1}, \ldots, x_{m}$ represent distinct $G$-classes such that $\sum_{i}{|x_{i}^{G}\cap H|}\leqs A$ and $|x_{i}^{G}|\geqs B$ for all $i$. Then 
\[
\sum_{i=1}^{m} |x_i^{G}| \cdot \left(\frac{|x_i^{G} \cap H|}{|x_i^{G}|}\right)^c \leqs B(A/B)^c
\]
for every positive integer $c$.
\end{lem}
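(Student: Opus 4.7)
The plan is to rewrite the quantity to be estimated in a form that cleanly isolates the roles of $A$ and $B$, and then apply two elementary monotonicity arguments. Setting $a_i = |x_i^G \cap H|$ and $b_i = |x_i^G|$, the hypotheses become $\sum_i a_i \leq A$ and $b_i \geq B$, and the sum under consideration simplifies to
\[
S := \sum_{i=1}^{m} b_i \left(\frac{a_i}{b_i}\right)^c = \sum_{i=1}^{m} \frac{a_i^c}{b_i^{c-1}}.
\]

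First I would dispose of the denominators. Since $c$ is a positive integer, $c-1 \geq 0$, and the bound $b_i \geq B$ gives $b_i^{c-1} \geq B^{c-1}$. Pulling this uniform estimate out of the sum reduces the task to bounding $\sum_i a_i^c$ by $A^c$.

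The second, core step is to prove $\sum_i a_i^c \leq A^c$. The cleanest route is the factorisation $a_i^c = a_i \cdot a_i^{c-1}$: since each $a_i$ is a nonnegative integer and $a_i \leq \sum_j a_j \leq A$, we get $a_i^{c-1} \leq A^{c-1}$, hence
\[
\sum_{i=1}^{m} a_i^c \leq A^{c-1} \sum_{i=1}^{m} a_i \leq A^c.
\]
Combining with the first step yields $S \leq A^c / B^{c-1} = B(A/B)^c$, as required.

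There is no genuine obstacle — the lemma is essentially an algebraic rearrangement followed by two monotonicity estimates. The only point worth a moment's care is the super-additivity inequality $\sum_i a_i^c \leq (\sum_i a_i)^c$ for nonnegative reals and $c \geq 1$, but the one-line factorisation above makes this transparent and avoids any appeal to Jensen or power-mean arguments.
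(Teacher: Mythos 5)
Your proof is correct, and it follows the same standard approach as the cited source \cite{Bur7} (the paper itself states the lemma by reference without reproducing the argument): rewrite the summand as $a_i^c/b_i^{c-1}$, bound the denominator uniformly by $B^{c-1}$, and then use superadditivity of $t \mapsto t^c$ in the form $\sum_i a_i^c \leqs (\sum_i a_i)^c$. Your one-line factorisation $a_i^c = a_i \cdot a_i^{c-1} \leqs a_i A^{c-1}$ is a clean, self-contained way to justify that last inequality.
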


\subsection{Computational methods}\label{ss:comp}

We will use computational methods extensively in the proof of Theorem \ref{t:as} to handle small degree symmetric and alternating groups, as well as some low rank groups of Lie type defined over small fields. In all cases, we use {\sc Magma} V2.23-2 \cite{magma} to do the computations. Here we briefly describe the main techniques.

Let $G \leqs {\rm Sym}(\O)$ be an almost simple primitive group with socle $G_0$ and soluble point stabiliser $H$. Given $G$ as an abstract group, our initial aim is to construct $G$ as a permutation group of an appropriate degree (this is not necessarily the permutation representation of $G$ on $\O$). Typically we do this by first using the function \texttt{AutomorphismGroupSimpleGroup} to obtain $A = {\rm Aut}(G_0)$ as a permutation group and then we identify $G$ by inspecting the subgroups of $A$ containing $G_0$. For example, we can use the command \texttt{LowIndexSubgroups(A,m)}, which  returns a set of representatives of the $A$-classes of subgroups of $A$ of index at most $m$. 

Next we construct $H$ as a subgroup of $G$ in the same permutation representation. To do this, we usually use the command \texttt{MaximalSubgroups(G:IsSolvable:=true)}, which returns a set of representatives of the $G$-classes of soluble maximal subgroups of $G$ and it  is easy to identify the representative conjugate to $H$. For certain large groups of interest, the \texttt{MaximalSubgroups} function is ineffective and so in these cases we need to adopt a different approach. In the handful of cases where this issue arises, $G$ is a classical group and we can either use the \texttt{ClassicalMaximals} function to construct an appropriate maximal subgroup of the corresponding matrix group, or we can seek a direct construction of $H$ inside $G$. 

\begin{ex}\label{e:u63}
To illustrate the latter approach, suppose $G = {\rm Aut}({\rm U}_{6}(3))$ and $H$ is a maximal subgroup of type ${\rm GU}_{2}(3) \wr S_3$ (this case arises in the proof of Proposition \ref{p:nonpar1}). Here $|H \cap G_0| = 2^{13}3^4$ and we observe that $H = N_G(K)$, where $K$ is a subgroup of $G_0$ of order $2^{10}$. Given this, we can use the following code to construct $G$ and $H$ as permutation groups of degree $22204$:

{\small
\begin{verbatim}
G:=AutomorphismGroupSimpleGroup("U",6,3);
g:=Socle(G);
S:=SylowSubgroup(g,2);
S,f:=PCGroup(S);
N:=Subgroups(S:OrderEqual:=2^10);
exists(k){i : i in [1..#N] | #Normalizer(g,N[i]`subgroup@@f) eq 2^13*3^4};
H:=Normalizer(G,N[k]`subgroup@@f);
\end{verbatim}}
\end{ex}

\begin{ex}\label{e:o123}
Suppose $G = {\rm PGO}_{12}^{+}(3)$ and $H$ is a maximal subgroup of type ${\rm O}_{4}^{+}(3) \wr S_3$; this is also a genuine case that we will need to handle in the proof of Proposition \ref{p:nonpar1}. Write $G = L/Z$ and $H = K/Z$, where $L$ is the matrix group ${\rm GO}_{12}^{+}(3)$ and $Z = Z(L)$. We use \texttt{ClassicalMaximals} to construct $K$ (noting that $K$ is contained in Aschbacher's $\mathcal{C}_2$ collection of maximal subgroups of $L$, which explains why we set \texttt{classes:=\{2\}}) and then we take images modulo scalars to obtain $G$ and $H$ as permutation groups of degree $88816$:

{\small
\begin{verbatim}
L:=CGOPlus(12,3);
C:=ClassicalMaximals("O+",12,3: classes:={2}, normaliser:=true);
exists(i){i : i in [1..#C] | LMGIsSoluble(C[i]) eq true};
K:=C[i];
f,G,R:=PermutationRepresentation(L:ModScalars:=true);
H:=f(K);             
\end{verbatim}}
\end{ex}

Let us assume we have now constructed $G$ and $H$ as permutation groups. In most cases, we can compute $b(G,H)$ simply by combining the lower bound in Lemma \ref{l:easy} with a random search. More precisely, if $\lceil \log |G| / \log|\O| \rceil = c$ then Lemma \ref{l:easy} gives $b(G,H) \geqs c$ and by random search we will typically be able to find elements $x_1, \ldots, x_{c-1}$ in $G$ such that 
\[
H \cap H^{x_1} \cap \cdots \cap H ^{x_{c-1}} = 1,
\]
which gives the reverse inequality  $b(G,H) \leqs c$.

However, there are some situations where this approach is ineffective because $G$ does not have a base of size $c$. In other words,  
\[
c = \left\lceil \frac{\log |G|}{\log |\O|} \right\rceil < b = b(G,H).
\]
Here we establish the bound $b(G,H) \leqs b$ by random search and then to conclude we need to show that every $(b-1)$-point stabiliser is nontrivial. For example, if $G = S_8$ and $H = S_4 \wr S_2$, then $\lceil \log |G| /\log |\O| \rceil = 3$ and $b(G,H) \leqs 5$ by random search. By computing the order of every $4$-point stabiliser we deduce that $b(G,H) = 5$.  

To compute the order of every $(b-1)$-point stabiliser, we use the \texttt{CosetAction} function to construct $G$ as a permutation group on the set of cosets of $H$ and then we inspect stabiliser chains, working with representatives of the orbits of $k$-point stabilisers for $k<b-1$. This approach is straightforward to implement and it is effective for all but one case that arises in this paper. The exceptional case is described in the following example.

\begin{ex}\label{e:special}
Suppose $G_0 = {\rm P\O}_{8}^{+}(3)$ and $H$ is of type ${\rm O}_{4}^{+}(3) \wr S_2$. Here $\lceil \log |G| / \log |\O| \rceil = 2$ and by random search we deduce that $b(G,H) = 2$ if $|G:G_0| \leqs 4$. In the remaining cases, we claim that $b(G,H) = 3$. By random search, we get $b(G,H) \leqs 3$ and so it remains to show that every $2$-point stabiliser is nontrivial. But the method outlined above using \texttt{CosetAction} is ineffective since $|\O| = 14926275$ is prohibitively large. To resolve these cases, we use the double coset enumeration technique explained in \cite[Section 2.3.3]{BOW}. Here the aim is to find a set $T$ of distinct $(H,H)$ double coset representatives such that 
\begin{itemize}\addtolength{\itemsep}{0.2\baselineskip}
\item[{\rm (a)}] $|HxH| < |H|^2$ for all $x \in T$; and
\item[{\rm (b)}] $\sum_{x \in T} |HxH| > |G| - |H|^2$.
\end{itemize}
Indeed, if such a set $T$ exists, then $H$ does not have a regular orbit on $\O$ and we deduce that $b(G,H) \geqs 3$. As noted in \cite{BOW}, this approach can be implemented in {\sc Magma} and for the case above we quickly deduce that $b(G,H) = 3$ when $|G:G_0| \geqs 6$.
\end{ex}

\section{Alternating and sporadic groups}\label{s:altsp}

In this section we begin the proof of Theorem \ref{t:as} by handling the case where $G_0$ is either an alternating or sporadic simple group. Our main result is the following.

\begin{prop}\label{p:altsp}
Let $G \leqs {\rm Sym}(\O)$ be a finite almost simple primitive group with socle $G_0$ and soluble point stabiliser $H$. Set $b=b(G,H)$ and assume $G_0$ is either an alternating group or a sporadic simple group. 
\begin{itemize}\addtolength{\itemsep}{0.2\baselineskip}
\item[{\rm (i)}] We have $b \leqs 5$, with equality if and only if $G = S_8$ and $H = S_4 \wr S_2$.
\item[{\rm (ii)}] We have $b>2$ if and only if $(G,H,b)$ is one of the cases recorded in Table \ref{tab:as1}.
\end{itemize}
In addition, $\mathcal{P}(G,2) \to 1$ as $|G| \to \infty$.
\end{prop}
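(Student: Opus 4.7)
The plan is to invoke the classification of Li and Zhang \cite{LZ}, which lists the soluble maximal subgroups of every finite almost simple group, and then to apply the probabilistic machinery of Section~\ref{ss:bases} together with direct computation in \textsc{Magma} as in Section~\ref{ss:comp}. In each case the uniform strategy will be to estimate $\mathcal{Q}(G,2)$ from \eqref{e:Q} and appeal to Lemma~\ref{l:base} to conclude $b=2$, then fall back on Lemma~\ref{l:easy} combined with a random search to pin down the value of $b$ in the finitely many residual pairs.

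For sporadic socles the Li--Zhang list is manifestly finite. For each pair I would realise $G$ and $H$ as permutation groups via \texttt{AutomorphismGroupSimpleGroup} and \texttt{MaximalSubgroups}, resorting to direct constructions in the style of Examples~\ref{e:u63} and \ref{e:o123} for the few large sporadic groups where these commands become impractical, and then compute $\mathcal{Q}(G,2)$ exactly from \eqref{e:Q}. Whenever $\mathcal{Q}(G,2)<1$ this gives $b=2$; the remaining sporadic cases are precisely the ones appearing in Table~\ref{tab:as1} and the corresponding value of $b$ is then determined by the bound of Lemma~\ref{l:easy} together with an explicit base produced by random search.

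For alternating socles the O'Nan--Scott theorem forces a soluble maximal subgroup of $A_n$ or $S_n$ into one of three shapes: intransitive of shape $(S_k\times S_{n-k})\cap G$ with $k,n-k\leqs 4$; imprimitive of shape $(S_a\wr S_b)\cap G$ with $a,b\leqs 4$, in either the natural or the product action; or primitive of affine type. The first two shapes impose $n\leqs 256$ and give a bounded list, every member of which I would treat by direct computation. This is where the exceptional value $b=5$ appears: for $(G,H)=(S_8,S_4\wr S_2)$ the lower bound of Lemma~\ref{l:easy} only yields $b\geqs 3$, so to prove $b=5$ I would run the stabiliser-chain enumeration via \texttt{CosetAction} described in Section~\ref{ss:comp} and check directly that every $4$-point stabiliser is nontrivial. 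After discarding small exceptions, the primitive affine shape collapses to the infinite family $G\in\{S_p,A_p\}$ with $H=\mathrm{AGL}_1(p)\cap G$ for $p$ an odd prime, which carries the asymptotic content of the proposition.

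For this infinite family I would bound $\mathcal{Q}(G,2)$ using Lemma~\ref{l:calc}. The elements of prime order in $H=p{:}(p-1)$ are the $p-1$ nontrivial elements of the normal $C_p$, which form a single $G$-class of $p$-cycles of size $(p-1)!$, together with, for each prime $r\mid p-1$, a collection of elements of order $r$ with cycle type $(r^{(p-1)/r},1)$: all such elements in $H$ fall in a single $G$-class and the intersection with $H$ has size at most $(r-1)p$, while the $G$-class itself grows superpolynomially in $p$. Summing yields $\mathcal{Q}(G,2)\to 0$ as $p\to\infty$, which simultaneously gives $b(G,H)=2$ for all sufficiently large $p$ and the asymptotic $\mathcal{P}(G,2)\to 1$; the finitely many small primes left over are checked computationally. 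The real obstacle is not this clean probabilistic treatment but the boundary case $(S_8,S_4\wr S_2)$, where the probabilistic bound fails decisively and an exhaustive stabiliser enumeration is unavoidable to rule out a base of size four.
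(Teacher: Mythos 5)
Your proposal is correct and follows essentially the same route as the paper: reduce to a finite list of pairs $(G,H)$ plus the single infinite family $(S_p/A_p,\,{\rm AGL}_1(p)\cap G)$, handle the finite list in {\sc Magma} (including the stabiliser-chain check that forces $b=5$ for $(S_8,S_4\wr S_2)$), and estimate $\mathcal{Q}(G,2)$ probabilistically for the affine family. The only difference is that the paper shortens the argument by citing \cite{BOW} for all sporadic cases, citing \cite[Theorem~1.1]{BGS} for $b=2$ in the affine family, and reading the possible $H$ directly from the Li--Zhang tables rather than re-deriving the shapes of soluble maximal subgroups of $S_n$ and $A_n$.
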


\begin{proof}
First assume $G_0$ is a sporadic simple group. Here the possibilities for $H$ are listed in \cite[Table 15]{LZ} and in each case $b(G,H)$ is computed in \cite{BOW}. The result follows by inspection.

Now assume $G_0 = A_n$ is an alternating group. The cases with $n \leqs 16$ are easily verified using {\sc Magma} \cite{magma} (see Section \ref{ss:comp}), so let us assume $n \geqs 17$. Then by inspecting \cite[Table 14]{LZ}, we see that $n = p$ and $H = {\rm AGL}_{1}(p) \cap G$ is the only possibility, where $p$ is a prime. Here $b(G,H) = 2$ by \cite[Theorem 1.1]{BGS}, so it just remains to show that $\mathcal{P}(G,2) \to 1$ as $p \to \infty$. Define $\mathcal{Q}(G,2)$ as in \eqref{e:Q} and observe that $|H| \leqs p(p-1)$ and every nontrivial element in $H$ has at most one fixed point on $\{1, \ldots, p\}$. By considering the involutions in $H$, we deduce that  
\[
|x^G| \geqs \frac{p!}{((p-1)/2)!2^{(p-1)/2}}
\]
for all $x \in H$ of prime order and one checks that this lower bound is greater than $p^5$ for $p \geqs 17$. Therefore, Lemma \ref{l:calc} implies that
\[
\mathcal{Q}(G,2) \leqs \frac{p^2(p-1)^2}{p^5} < p^{-1}
\]
and we conclude that $\mathcal{P}(G,2) \to 1$ as $|G| \to \infty$.
\end{proof}

\section{Two-dimensional linear groups}\label{s:psl2}

In this section we establish Theorem \ref{t:as} for the groups with socle $G_0 = {\rm L}_{2}(q)$. We begin by introducing some general notation, which we will use throughout this section. 

Let $V$ be the natural module for $G_0$ and write $q=p^f$ and $d = (2,q-1)$, where $p$ is a prime. Set $\tilde{G} = {\rm PGL}_{2}(q)$. Fix a basis $\{e_1,e_2\}$ for $V$ and write $\mathbb{F}_{q}^{\times} = \la \mu \ra$. Then 
\begin{equation}\label{e:aut}
{\rm Aut}(G_0) = \la G_0, \delta, \phi \ra,
\end{equation}
where $\delta \in \tilde{G}$ is the image (modulo scalars) of the diagonal matrix ${\rm diag}(\mu, 1) \in {\rm GL}_{2}(q)$ and $\phi$ is a field automorphism of order $f$ such that $(ae_1+be_2)^{\phi} = a^pe_1+b^pe_2$ for all $a,b \in \mathbb{F}_q$. For $g \in {\rm Aut}(G_0)$, if we write $\ddot{g}$ for the coset $G_0g$, then
\[
{\rm Out}(G_0) = \{ \ddot{g} \,:\, g \in {\rm Aut}(G_0) \} = \la \ddot{\delta} \ra \times \la \ddot{\phi} \ra = C_{d} \times C_{f}.
\]
If $H$ is a subgroup of $G$, then we set $H_0 = H \cap G_0$.

Since ${\rm L}_{2}(4) \cong {\rm L}_{2}(5) \cong A_5$ and ${\rm L}_{2}(9) \cong A_6$, we will assume $q \geqs 7$ and $q \ne 9$ (see Proposition \ref{p:altsp} for the excluded cases). The possibilities for $H$ are easily determined by inspecting \cite{LZ} (or by consulting \cite[Table 8.1]{BHR}) and they are recorded in Table \ref{tab:psl2}. Following \cite{KL}, we refer to the \emph{type} of $H$, which provides a rough description of the structure of $H$. Note that in the first row, $H$ is a parabolic subgroup of $G$ (as the notation indicates, it is the stabiliser in $G$ of a $1$-dimensional subspace of $V$).

\begin{table}
\[
\begin{array}{clll} \hline
\mbox{Case} & \mbox{Type of $H$}  & \mbox{Conditions} & \hspace{5mm} b(G,H) \\ \hline
{\rm (a)} & P_1 & & \hspace{5mm} \mbox{See Remark \ref{r:psl2}} \\
{\rm (b)} & {\rm GL}_{1}(q) \wr S_2 & & \left\{\begin{array}{ll}
3 & {\rm PGL}_{2}(q)<G \\
2 & \mbox{otherwise}
\end{array}\right. \\
{\rm (c)} & {\rm GL}_{1}(q^2) & & \left\{\begin{array}{ll}
3 & {\rm PGL}_{2}(q) \leqs G \\
2 & \mbox{otherwise}
\end{array}\right. \\
{\rm (d)} & {\rm GL}_{2}(3) & \mbox{$q = 3^k$, $k \geqs 3$ prime} & \hspace{5mm} 2 \\ 
{\rm (e)} & 2^{1+2}_{-}.{\rm O}_{2}^{-}(2) & q = p \geqs 7 & \hspace{5mm} 2+\delta_{7,q} \\ \hline
\end{array}
\]
\caption{The cases with $G_0 = {\rm L}_{2}(q)$}
\label{tab:psl2}
\end{table}

The main result of this section is the following.

\begin{prop}\label{p:psl2}
Let $G \leqs {\rm Sym}(\O)$ be a finite almost simple primitive group with socle $G_0 = {\rm L}_{2}(q)$ and soluble point stabiliser $H$, where $q \geqs 7$ and $q \ne 9$. Then $b(G,H)$ is recorded in the final column of Table \ref{tab:psl2}. In particular, $b(G,H) \leqs 4$ and $\mathcal{P}(G,c) \to 1$ as $q \to \infty$, where $c = 4$ if $H$ is of type $P_1$, otherwise $c=3$.
\end{prop}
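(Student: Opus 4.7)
The plan is to handle the five rows of Table \ref{tab:psl2} separately, in each case combining Lemma \ref{l:easy} for a lower bound with the probabilistic bound of Lemmas \ref{l:base} and \ref{l:calc} for an upper bound. The basic input is the conjugacy class structure of ${\rm L}_2(q)$ together with its extensions by diagonal, field and graph-field automorphisms: for every $G$-class of elements of prime order meeting $H$ we need $|x^G|$ and $|x^G \cap H|$ in order to estimate $\mathcal{Q}(G,c)$. In each case the non-trivial outer elements of prime order in $H$ reduce to a short list of involutory automorphisms, all of which are well understood via the description \eqref{e:aut}.

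Case (a), where $H$ is a Borel subgroup and $|\Omega|=q+1$, is the bulk of the work. Since $|G_0|=q(q^2-1)/d$, Lemma \ref{l:easy} already gives $b(G,H)\geq 3$, and this rises to $4$ when enough outer automorphisms are present. The elements of prime order in $H_0$ are the $q-1$ unipotents of order $p$ (sitting in one or two $G_0$-classes according to the parity of $p$) together with at most two semisimple elements in each $G_0$-class of odd prime order dividing $(q-1)/d$; each such $G_0$-class has size $\sim q^2$, so Lemma \ref{l:calc} with $c=4$ gives $\mathcal{Q}(G,4) = O(1/q)$, yielding both $b(G,H)\leq 4$ and $\mathcal{P}(G,4)\to 1$. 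A more careful analysis with $c=3$ identifies exactly those subcases (as spelt out in Remark \ref{r:psl2}) where $b(G,H)=3$, the borderline being driven by the comparatively large fixed point ratios of involutory field and diagonal automorphisms on $G/P_1$.

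Cases (b) and (c), where $H_0$ is dihedral of order $2(q\mp 1)/d$ and $|\Omega|$ has order $\sim q^2$, give $b(G,H)\geq 2$ from Lemma \ref{l:easy}; the probabilistic bound at $c=3$ yields $\mathcal{P}(G,3)\to 1$, and the jump to $b(G,H)=3$ when $\tilde G\leq G$ is forced by showing directly that the diagonal involution lies in every $G$-conjugate of $H$ that meets $H$ non-trivially, equivalently that $H$ has no regular orbit on $\Omega$. Finally, cases (d) and (e) have $|H|\leq 48$ and $|\Omega|$ polynomial in $q$, so a crude application of Lemma \ref{l:calc} immediately gives $b(G,H)=2$ for all sufficiently large $q$, with the single exception $q=7$ in (e) handled computationally via \textsc{Magma}.

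The main obstacle is the bookkeeping in case (a): one must track the contribution to $\mathcal{Q}(G,c)$ from every class of outer elements of prime order, in particular the involutory diagonal and field automorphisms, in order both to decide case-by-case whether $b(G,H)$ equals $3$ or $4$ and to establish $\mathcal{P}(G,4)\to 1$ uniformly in $q$. A secondary difficulty, in cases (b) and (c) when $\tilde G\leq G$, is that the lower bound $b(G,H)\geq 3$ cannot be obtained from counting alone and requires a direct structural argument about the action of $H$ on $\Omega$.
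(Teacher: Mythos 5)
Your overall organisation by the five rows of Table~\ref{tab:psl2} matches the paper, and your treatment of cases (d) and (e) (crude bounds plus a computer check at $q=7$) is essentially what the paper does (Lemmas~\ref{l:l2sub} and~\ref{l:s4}). Your probabilistic estimates for $\mathcal{Q}(G,4)$ in case (a) and $\mathcal{Q}(G,3)$ in cases (b), (c) likewise mirror Lemmas~\ref{l:l2p10} and~\ref{l:l2base}. But there are two genuine gaps.

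The serious one is in case (a). You propose to decide whether $b(G,H)=3$ or $4$ by ``a more careful analysis with $c=3$'' of $\mathcal{Q}(G,3)$. This cannot work, for two reasons. First, $\mathcal{Q}(G,c)<1$ is only a \emph{sufficient} condition for $b\leqslant c$, so establishing $\mathcal{Q}(G,3)\geqslant 1$ gives no information about the lower bound $b\geqslant 4$; and Lemma~\ref{l:easy} does not give $b\geqslant 4$ either, since $|G|<|\O|^3$ even for $G={\rm Aut}(G_0)$ in the relevant range (e.g.\ $q=25$). Second, $\mathcal{Q}(G,3)$ is typically \emph{not} less than $1$ even when $b(G,H)=3$: already for $G=G_0$ with $q$ even, the semisimple contribution alone is of order $4q^2/(q+1)^2$, so the bound of Lemma~\ref{l:base} is simply too crude at $c=3$. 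The paper instead handles this via a direct structural argument (Lemma~\ref{l:l2p1}): it exhibits an explicit base $\{\langle e_1\rangle,\langle e_2\rangle,\langle e_1+e_2\rangle,\langle e_1+\mu e_2\rangle\}$ of size $4$ and then analyses the orbits of a two-point stabiliser on $\O$, exploiting that $G_0$ is $2$-transitive and ${\rm PGL}_2(q)$ is sharply $3$-transitive. Some replacement for this argument is essential; the fixed-point-ratio machinery alone cannot distinguish $b=3$ from $b=4$ here.

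The second gap is smaller. In cases (b) and (c) you rightly observe that the lower bound $b\geqslant 3$ (when $\tilde G\leqslant G$, resp.\ $\tilde G<G$ — note these two conditions differ between the two rows, and your ``$\tilde G\leqslant G$'' conflates them) requires a structural argument, but the claim you sketch — that ``the diagonal involution lies in every $G$-conjugate of $H$ that meets $H$ non-trivially'' — is not correct as stated and is not what is needed. What is needed is that $H$ has no regular orbit on $\O$. The paper obtains this by citing the known suborbit structure of $({\rm PGL}_2(q),H)$ from \cite{FI,BG_saxl}: in case (b) $H$ has a unique regular suborbit, so any $G$ properly containing ${\rm PGL}_2(q)$ forces $b\geqslant 3$; in case (c) there is no suborbit of the relevant size at all. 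You also omit the complementary direction in these cases: when $b=2$ one must actually exhibit a base of size two (the paper does this by explicit computations with $\{e_1,e_2\}$ and an $\mathbb{F}_{q^2}$-basis in Lemmas~\ref{l:l2c2} and~\ref{l:l2c3}), since there is no general reason for $H$ to have a regular orbit.
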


\begin{rem}\label{r:psl2}
In case (a) we have $b(G,H) \in \{3,4\}$, with $b(G,H) = 3$ if and only if 
\begin{itemize}\addtolength{\itemsep}{0.2\baselineskip}
\item[{\rm (i)}] $G \leqs {\rm PGL}_{2}(q)$; or 
\item[{\rm (ii)}] $q$ is odd, $f$ is even and $G = \la G_0, \delta\phi^{f/2} \ra = G_0.\la \ddot{\delta}\ddot{\phi}^{f/2}\ra = G_0.2$.
\end{itemize}
Equivalently, $b(G,H) = 3$ if and only if $G=G_0$ or $G$ is sharply $3$-transitive.
\end{rem}

\begin{rem}\label{r:asy}
Let $G$ be as in Proposition \ref{p:psl2} with $q \geqs 11$ and set $b = b(G_0,H_0)$. Then   
\[
b = \left\{\begin{array}{ll}
3 & \mbox{if $H$ is of type $P_1$, or if $p=2$ and $H$ is of type ${\rm GL}_{1}(q^2)$} \\
2 & \mbox{otherwise} 
\end{array}\right.
\]
and the proof of Proposition \ref{p:psl2} shows that
\[
\mathcal{P}(G_0,b) \to \left\{\begin{array}{ll}
0 & \mbox{if $p=2$ and $H$ is of type ${\rm GL}_{1}(q) \wr S_2$} \\
1/2 & \mbox{if $p\ne 2$ and $H$ is of type ${\rm GL}_{1}(q) \wr S_2$ or ${\rm GL}_{1}(q^2)$} \\
1 & \mbox{otherwise}
\end{array}\right.
\]
as $q \to \infty$.
\end{rem}

We will prove Proposition \ref{p:psl2} with a sequence of lemmas. We refer the reader to \cite[Section 3.2]{BG} for a source of information on the conjugacy classes of elements of prime order in ${\rm Aut}(G_0)$. We start by considering case (a) in Table \ref{tab:psl2}.
 
\begin{lem}\label{l:l2p10}
If $G_0 = {\rm L}_{2}(q)$ and $H$ is of type $P_1$, then  $b(G,H) \leqs 4$ and $\mathcal{P}(G,4) \to 1$ as $q \to \infty$.
\end{lem}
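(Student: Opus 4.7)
The plan is to apply the probabilistic method of Lemma~\ref{l:base} with $c=4$ and show that $\mathcal{Q}(G,4) \to 0$ as $q \to \infty$; this yields both $b(G,H) \leqs 4$ for all sufficiently large $q$ and $\mathcal{P}(G,4) \to 1$. The remaining small values of $q$ will be handled by direct computation in \textsc{Magma} following Section~\ref{ss:comp}.

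Since $H$ is of type $P_1$, the action of $G$ on $\O$ is permutation isomorphic to the natural $2$-transitive action of $G$ on the $q+1$ one-dimensional subspaces of $V$, so $|\O|=q+1$ and the fixed point count of any element of $G$ can be read off geometrically. The $G$-classes of elements of prime order meeting $H$ fall into three families: (i) unipotent elements of order $p$ in the unipotent radical of $H$, each fixing exactly one point of $\O$, so ${\rm fpr}(x)=1/(q+1)$; (ii) semisimple elements of prime order $r$ dividing $q-1$ in a Levi complement, each fixing exactly two points, so ${\rm fpr}(x)=2/(q+1)$; and (iii) elements of the form $\delta^{a}\phi^{b}$ of prime order $r$ dividing $f$, fixing at most $q^{1/r}+1$ points of $\O$, so ${\rm fpr}(x)=O(q^{1/r-1})$. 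Using the centraliser data in \cite[Section~3.2]{BG}, the corresponding class sizes satisfy $|x^G|=O(q^2)$ in cases (i) and (ii) and $|x^G|=O(f\cdot q^{3(1-1/r)})$ in case (iii), since the centraliser of a field automorphism of order $r$ essentially contains ${\rm L}_{2}(q^{1/r})$.

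Substituting these bounds into $\mathcal{Q}(G,4)=\sum_i |x_i^G|\cdot{\rm fpr}(x_i)^4$, the contribution of each class in families (i) and (ii) is $O(q^{-2})$, and there are $O(\log q)$ such classes in total; each class in (iii) contributes at most $O(f\cdot q^{1/r-1})$, with the slowest-decaying term $r=2$ giving $O(\log q/\sqrt{q})$ since $f=O(\log q)$. Summing all contributions yields $\mathcal{Q}(G,4)=O(\log q/\sqrt{q})\to 0$, as required. The main obstacle will be the uniform treatment of family (iii), in particular the mixed diagonal--field involutions $\delta\phi^{f/2}$ that arise when $q$ is odd and $f$ is even: these are precisely the elements whose fusion pattern and fixed point structure on $\O$ govern the $b(G,H)=3$ versus $b(G,H)=4$ dichotomy flagged in Remark~\ref{r:psl2}, and pinning down their $G$-class sizes and exact fixed point ratios needs a careful direct calculation using the description of ${\rm Aut}(G_0)$ in \eqref{e:aut}.
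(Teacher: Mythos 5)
Your proposal follows essentially the same strategy as the paper: apply Lemma \ref{l:base} with $c=4$, split the prime-order elements into unipotent, toral semisimple, and field-automorphism families, bound each contribution to $\mathcal{Q}(G,4)$ using fixed point counts on the $1$-spaces, and handle small $q$ computationally. However, there is a real gap in your accounting of family (ii). You assert there are $O(\log q)$ classes across families (i) and (ii), but the number of $G$-conjugacy classes of semisimple elements of prime order dividing $q-1$ is \emph{not} $O(\log q)$: for each prime $r \mid (q-1)$ there are $\tfrac{1}{2}(r-1)$ classes, and $r$ can be as large as $q-1$ (for instance when $q-1 = 2r$ with $r$ prime), so the total number of such classes can be $\Theta(q)$. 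Each class contributes $O(q^{-2})$, so the correct bound on the family (ii) contribution is $O(q^{-1}\log q)$ (the paper obtains $\a_3 < \tfrac{8q(q-2)\log q}{(q+1)^3}$), not $O(q^{-2}\log q)$ as your count would suggest. The final estimate $\mathcal{Q}(G,4) = O(q^{-1/2}\log q)$ happens to survive because the $r=2$ field-automorphism term still dominates, but the argument as written would not hold up if challenged.

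Two smaller points. First, your class-size bound $|x^G| = O(f\cdot q^{3(1-1/r)})$ for field automorphisms has a spurious factor of $f$ (the centraliser also carries an $f$-factor, so in fact $|x^{G_0}| \approx q^{3(1-1/r)}$); this only makes your upper bound looser, so it is harmless, but the paper's tighter accounting yields $O(q^{-1/2}\log\log q)$ rather than $O(q^{-1/2}\log q)$. Second, your concern that the mixed involutions $\delta\phi^{f/2}$ require special treatment in \emph{this} lemma is misdirected: the paper absorbs them uniformly into the field-automorphism count (there are $r+\delta_{2,r}-1$ relevant $G_0$-classes of order $r$), and no separate analysis is needed here. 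The $b=3$ versus $b=4$ dichotomy they control is the content of the following lemma (Lemma \ref{l:l2p1}), not of the bound $b \leqs 4$. Finally, be aware that the asymptotic estimate genuinely fails for small $q$ — the paper notes $\mathcal{Q}(G,4)>1$ at $q=32$ — so the threshold beyond which the probabilistic bound kicks in must be pinned down explicitly before deferring to {\sc Magma}.
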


\begin{proof}
Here $H_0 =(C_p)^f{:}C_{(q-1)/d}$ is a Borel subgroup of $G_0$ and we have $H = N_G(P)$, where $P$ is a Sylow $p$-subgroup of $G_0$. Note that $|\O| = q+1$ and we may identify $\O$ with the set of $1$-dimensional subspaces of $V$. In view of Lemma \ref{l:base}, it suffices to show that $\mathcal{Q}(G,4) < 1$ and $\mathcal{Q}(G,4) \to 0$ as $q$ tends to infinity. The cases with $q \leqs 32$ can be checked using {\sc Magma} \cite{magma} (see Section \ref{ss:comp}), so we will assume that $q>32$. Let $\chi$ be the corresponding permutation character of $\tilde{G} = {\rm PGL}_{2}(q)$, so $\chi(x) = |C_{\O}(x)|$ for all $x \in \tilde{G}$ and we note that $\chi = 1+{\rm St}$ is the sum of the trivial and Steinberg characters of $\tilde{G}$. We proceed by estimating the contribution to $\mathcal{Q}(G,4)$ from the different types of elements of prime order in $H$.

Suppose $x \in H$ has prime order $r$. If $x$ is unipotent then $r=p$, $|x^{\tilde{G}}| = q^2-1$ and $\chi(x) = 1$ (since every regular unipotent element is contained in a unique Borel subgroup, or recall that the Steinberg character  vanishes at nontrivial unipotent elements). Therefore, $|x^{\tilde{G}} \cap H| =  q-1$ and we deduce that the contribution to $\mathcal{Q}(G,4)$ from unipotent elements is equal to 
\[
\a_1 = \frac{(q-1)^4}{(q^2-1)^3} = \frac{q-1}{(q+1)^3}.
\]

Next assume $x$ is a semisimple involution, so $q$ is odd. If $x$ is the image of a diagonalisable matrix in ${\rm GL}_{2}(q)$ (that is, if $C_{G_0}(x)$ is the normaliser of a split torus), then $|x^G| = \frac{1}{2}q(q+1)$ and $x$ fixes exactly two $1$-spaces, so $\chi(x) = 2$ and $|x^G\cap H| = q$. On the other hand, if $C_{G_0}(x)$ is the normaliser of a non-split torus, then $\chi(x) = 0$ and $x^G \cap H$ is empty. It follows that the contribution from semisimple involutions is given by
\[
\a_2 = \frac{q^4}{\left(\frac{1}{2}q(q+1)\right)^3} = \frac{8q}{(q+1)^3}.
\]
Now assume $x \in H$ is semisimple and $r \geqs 3$. Here $r$ divides $q-1$, $|x^{\tilde{G}}| = q(q+1)$ and $\chi(x) = 2$, so $|x^{\tilde{G}} \cap H| = 2q$. Since there are $\frac{1}{2}(r-1)$ distinct $\tilde{G}$-classes of such elements, we conclude that the combined contribution to $\mathcal{Q}(G,4)$ from semisimple elements of odd order is equal to 
\[
\a_3 = \sum_{r \in \pi} \frac{1}{2}(r-1) \cdot \frac{16q}{(q+1)^3},
\]
where $\pi$ is the set of odd prime divisors of $q-1$. Now $r \leqs q-1$ and $|\pi| < \log q$, so
\[
\a_3 < \frac{8q(q-2)\log q}{(q+1)^3} = \a_3'.
\]

Finally, let us assume $q=q_0^r$ and $x$ is a field automorphism of order $r$. Here $C_{H_0}(x)$ is a Borel subgroup of $C_{G_0}(x)$ (see the proof of 
\cite[Lemma 6.1]{LLS2}, for example) and thus
\[
|x^{G_0} \cap H| = \frac{q(q-1)}{(1+\delta_{2,r})q_0(q_0-1)},\;\; 
|x^{G_0}| =  \frac{q(q^2-1)}{(1+\delta_{2,r})q_0(q_0^2-1)}.
\]
Since there are $r+\delta_{2,r}-1$ distinct $G_0$-classes of field automorphisms of order $r$ in ${\rm Aut}(G_0)$, the combined contribution from field automorphisms is equal to 
\[
\a_4 = \sum_{r \in \pi'} (r-1) \cdot \frac{(q_0+1)^3}{(q+1)^3} \cdot \frac{q(q-1)}{q_0(q_0-1)},
\]
where $\pi'$ is the set of prime divisors of $f = \log_pq$. 

We have now shown that 
\[
\mathcal{Q}(G,4)  =  \a_1+(1-\delta_{2,p})\a_2+\a_3+\a_4
\]
and it is straightforward to check that this is less than $1$ if $32 < q<10000$. Therefore, for the remainder of the proof we may assume that $q>10000$. (Note that $\mathcal{Q}(G,4)>1$ if $q=32$, which explains why we used  {\sc Magma} to handle the cases with $q \leqs 32$.)

If $q_0=2$ then $\pi' = \{r\}$ and  
\[
\a_4 = (r-1) \cdot \frac{27}{2} \cdot \frac{2^r(2^r-1)}{(2^r+1)^3},
\] 
which is less than $q^{-1/2}$ since $r >13$. Now assume $q_0 \geqs 3$. Here one checks that  
\[
\frac{(q_0+1)^3}{(q+1)^3} \cdot \frac{q(q-1)}{q_0(q_0-1)} < 4q^{-\left(1 - \frac{1}{r}\right)}
\]
and thus    
\[
(r-1) \cdot \frac{(q_0+1)^3}{(q+1)^3} \cdot \frac{q(q-1)}{q_0(q_0-1)} < 4q^{-\frac{1}{2}}
\]
for all $r \in \pi'$. We deduce that $\a_4 < 4q^{-1/2}\log\log q = \a_4'$ since  $|\pi'|<\log\log q$.

In conclusion, if $q>10000$ then
\[
\mathcal{Q}(G,4) < \a_1+\a_2 + \a_3' + \a_4' < 5q^{-\frac{1}{2}}\log\log q
\]
and the result follows. 
\end{proof}

\begin{lem}\label{l:l2p1}
If $G_0 = {\rm L}_{2}(q)$ and $H$ is of type $P_1$, then $b(G,H) \in \{3,4\}$ and $b(G,H) = 3$ if and only if 
\begin{itemize}\addtolength{\itemsep}{0.2\baselineskip}
\item[{\rm (i)}] $G \leqs {\rm PGL}_{2}(q)$; or 
\item[{\rm (ii)}] $q$ is odd, $f$ is even and $G = \la G_0, \delta\phi^{f/2} \ra = G_0.2$.
\end{itemize}
In addition, if $b(G,H) = 3$ then $\mathcal{P}(G,3) \to 1$ as $q \to \infty$.
\end{lem}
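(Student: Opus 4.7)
My plan is to reduce the characterisation of $b(G,H) = 3$ to a counting argument around the $(q+1)q(q-1)$ ordered triples of distinct points in $\Omega$, combined with the classification of sharply $3$-transitive groups on $q+1$ points. Since the pointwise stabiliser in $G_0$ of two distinct $1$-spaces contains a cyclic subgroup of order $(q-1)/d$, nontrivial for $q \geqs 7$, we immediately have $b(G,H) \geqs 3$, and Lemma \ref{l:l2p10} already provides $b(G,H) \leqs 4$.

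For the forward direction, suppose $G$ satisfies (i) or (ii). In case (i), sharp $3$-transitivity of ${\rm PGL}_2(q)$ on $\Omega$ forces the pointwise stabiliser of $\{0,1,\infty\}$ in $G$ to be trivial, giving $b(G,H) = 3$. In case (ii), i.e.~$G = \langle G_0, \delta\phi^{f/2}\rangle$ with $q$ odd and $f$ even, I would identify $G$ with the classical Zassenhaus group ${\rm M}(q)$, the other sharply $3$-transitive group on $q+1$ points, again yielding $b(G,H) = 3$.

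For the converse, I use the identity $(q+1)q(q-1) = d\cdot|G_0|$ where $d = (2,q-1)$. Since $b(G,H) = 3$ requires a regular orbit on ordered triples of distinct points, this forces $|G/G_0| \leqs d$. For $q$ even this gives $G = G_0$, which is case (i). For $q$ odd we have $d=2$, and the possibilities for $G/G_0$ of order at most $2$ inside $\langle\ddot\delta\rangle\times\langle\ddot\phi\rangle$ are: trivial, $\langle \ddot\delta\rangle$, $\langle \ddot\delta\ddot\phi^{f/2}\rangle$ (when $f$ is even), and $\langle \ddot\phi^{f/2}\rangle$ (when $f$ is even). The first two sit inside ${\rm PGL}_2(q)$ and give case (i); the third gives case (ii). The last possibility is the main obstacle: here $|G|$ equals the number of ordered triples, so $b(G,H) = 3$ would force $G$ to be sharply $3$-transitive on $\Omega$. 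I would rule this out by noting that $G_0$ has exactly two orbits on ordered triples of distinct points (for $q$ odd), detected by whether the invariant $(c-b)(b-a)(c-a)$ lies in the subgroup of squares of $\mathbb{F}_q^{\times}$, and that the Frobenius $x\mapsto x^p$ preserves the unique index-$2$ subgroup of squares; hence $\phi^{f/2}$ preserves both $G_0$-orbits, $G$ is not $3$-transitive, and therefore $b(G,H) \geqs 4$.

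Finally, the asymptotic statement is immediate: in cases (i) and (ii), $G$ is contained in a sharply $3$-transitive subgroup of ${\rm Sym}(\Omega)$, so every ordered triple of distinct points is a base for $G$, giving
\[
\mathcal{P}(G,3) \geqs \frac{(q+1)q(q-1)}{(q+1)^3} = \frac{q(q-1)}{(q+1)^2} \to 1
\]
as $q \to \infty$.
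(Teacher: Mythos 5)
Your proof is correct and follows essentially the same strategy as the paper: the order-counting argument shows $b(G,H)=3$ forces $|G:G_0|\leqs d$, and the key step for $q$ odd is to rule out $G=\la G_0,\phi^{f/2}\ra$. The only notable difference is that you eliminate this case via the quadratic-residue invariant $(b-a)(c-b)(c-a)$ on $G_0$-orbits of ordered triples, whereas the paper simply observes that $\phi^{f/2}$ fixes the three points $\la e_1\ra$, $\la e_2\ra$, $\la e_1+e_2\ra$ and hence cannot lie in a sharply $3$-transitive group.
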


\begin{proof}
First observe that $\log |G| /\log |\O|>2$, so by combining Lemmas \ref{l:easy} and \ref{l:l2p10} we deduce that $b(G,H) \in \{3,4\}$. As before, we may identify $\O$ with the set of $1$-dimensional subspaces of the natural module $V$ for $G_0$. Given this identification, it is straightforward to check that   
\[
\{ \la e_1 \ra, \la e_2 \ra, \la e_1+e_2 \ra, \la e_1+\mu e_2 \ra\}
\]
is a base for $G$ of size $4$.

First assume $q$ is even, so $G_0$ is $3$-transitive on $\O$ and thus every $3$-point stabiliser in $G$ has order $|G:G_0|$. Therefore, $b(G,H) = 3$ if and only if $G = G_0$, in which case 
\begin{equation}\label{e:pG3}
\mathcal{P}(G,3) = \frac{|G|}{|\O|^3} = \frac{q(q^2-1)}{(q+1)^3}
\end{equation}
and we see that $\mathcal{P}(G,3) \to 1$ as $q \to \infty$. 

Now assume $q$ is odd. Let $\a,\b,\gamma \in \O$ be three distinct points and observe that $G_0$ is $2$-transitive, but not $3$-transitive on $\O$. Since ${\rm PGL}_{2}(q)$ is $3$-transitive, it follows that every $3$-point stabiliser in $G_0$ is trivial. Therefore, the $2$-point stabiliser $(G_0)_{\a,\b}$ has $4$ orbits on $\O$, namely $\{a\}$, $\{\b\}$ and two regular orbits $\Gamma_1$ and $\Gamma_2$, each of size $\frac{1}{2}(q-1)$. 

Suppose $G$ is $3$-transitive. Then $G_{\a,\b}$ is transitive on $\Gamma_1 \cup \Gamma_2$ and thus $|G_{\a,\b,\gamma}| = \frac{1}{2}|G:G_0|$. Therefore, $b(G,H) = 3$ if and only if $G = G_0.2$ is sharply $3$-transitive, which implies that either $G = {\rm PGL}_{2}(q)$, or $f$ is even and $G = \la G_0, \delta\phi^{f/2}\ra$ (note that $\phi$ fixes $\la e_1 \ra$, $\la e_2 \ra$ and $\la e_1+e_2 \ra$, so $\la G_0, \phi^{f/2}\ra$ is not $3$-transitive). In these cases, every $3$-point stabiliser is trivial and \eqref{e:pG3} holds. Finally, if $G$ is not $3$-transitive, then each $\Gamma_i$ is an orbit for $G_{\a,\b}$, so $|G_{\a,\b,\gamma}| = |G:G_0|$ and we deduce that $b(G,H) = 3$ if and only if $G = G_0$.
\end{proof}

\begin{lem}\label{l:l2base}
If $G_0 = {\rm L}_{2}(q)$ and $H$ is of type ${\rm GL}_{1}(q) \wr S_2$ or ${\rm GL}_{1}(q^2)$, then $b(G,H) \leqs 3$ and $\mathcal{P}(G,3) \to 1$ as $q \to \infty$.
\end{lem}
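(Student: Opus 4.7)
The plan is to apply Lemma~\ref{l:base} with $c=3$, showing $\mathcal{Q}(G,3)<1$ for all admissible $q$ and $\mathcal{Q}(G,3)\to 0$ as $q\to\infty$; this will immediately give $b(G,H)\leq 3$ and $\mathcal{P}(G,3)\to 1$. Small cases (say $q\leq 32$) can be dispatched directly in \textsc{Magma} as in Section~\ref{ss:comp}, so I may assume $q$ is large. The key structural data are: in case~(b), $H_0\cong D_{2(q-1)/d}$ is the normaliser of a split maximal torus and $|\Omega|=q(q+1)/2$, while in case~(c), $H_0\cong D_{2(q+1)/d}$ is the normaliser of a non-split maximal torus and $|\Omega|=q(q-1)/2$. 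In both cases $|H|$ is linear in $q$, while the smallest relevant $G_0$-class size of a semisimple element is quadratic in $q$, so every fixed point ratio for an element of $H$ will be $O(q^{-1})$.

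I will then partition the prime-order elements of $H$ into three families and estimate each contribution to $\mathcal{Q}(G,3)$, following the template of Lemma~\ref{l:l2p10}. First, semisimple torus elements of odd prime order $r\mid (q\mp 1)/d$ split into $\frac12(r-1)$ $\tilde{G}$-classes of size $q(q\mp 1)$, each meeting $H$ in only a constant number of elements, so their combined contribution after cubing and summing over the at most $\log q$ choices of $r$ is $O(q^{-3}\log q)$. Second, the involutions in $H$ (the unique torus involution when it exists, together with the roughly $q$ involutions in the Weyl-like coset) lie in at most two $G_0$-classes of size $\frac12 q(q\pm 1)$ or, in the characteristic~$2$ case, of size $q^2-1$; these will contribute $O(q^{-1})$ in total, giving the dominant term. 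Third, outer prime-order classes in $H\setminus H_0$---diagonal automorphisms when $q$ is odd, and field automorphisms of order $r\mid f$ when $q=q_0^r$---will be handled exactly as in the $\alpha_4$-computation of Lemma~\ref{l:l2p10}, using that $C_{H_0}(\phi^{f/r})$ is the normaliser of the corresponding type of torus inside ${\rm L}_2(q_0)$; these will contribute at most $O(q^{-3/2}\log\log q)$.

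Summing the three families will give $\mathcal{Q}(G,3)=O(q^{-1})$, which is $<1$ for large $q$ and tends to $0$ as $q\to\infty$, completing both conclusions of the lemma. The main obstacle will be the third family: to apply Lemma~\ref{l:calc} with the correct constants, I must carefully identify $x^G\cap H$ when $x$ is a field automorphism, which requires verifying that a suitable split (respectively non-split) torus of $G_0$ is $\phi$-stable and that its normaliser inside $C_{G_0}(\phi^{f/r})$ is of the same type over the fixed subfield $\mathbb{F}_{q_0}$. Once these subgroup identifications are in place---and they reduce, via a compatible choice of the Frobenius representative from~\eqref{e:aut} and of the torus, to the standard fact that Frobenius commutes with scalar base change of diagonal/quadratic matrices---the numerical estimates are routine variants of those in Lemma~\ref{l:l2p10}, and no genuinely new ideas are required.
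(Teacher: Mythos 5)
Your proposal follows essentially the same route as the paper: invoke Lemma~\ref{l:base} with $c=3$, dispatch small $q$ by \textsc{Magma}, and for large $q$ estimate $\mathcal{Q}(G,3)$ by partitioning prime-order elements of $H$ into odd-order semisimple torus elements, involutions, and field automorphisms, exactly as in the actual proof (the paper also treats only the ${\rm GL}_1(q)\wr S_2$ case in detail and notes the ${\rm GL}_1(q^2)$ case is analogous). Two small imprecisions worth flagging, neither of which affects the conclusion: first, listing ``diagonal automorphisms when $q$ is odd'' as a separate subfamily of $H\setminus H_0$ is redundant, since the only diagonal elements of prime order are involutions, which you have already absorbed into your second family (and the $\alpha_4$-style analysis of Lemma~\ref{l:l2p10} is specifically tailored to field automorphisms via Borel subgroups of $C_{G_0}(\phi^{f/r})$, so it would not directly apply to diagonal involutions anyway); second, your claimed $O(q^{-3/2}\log\log q)$ for the outer contribution is a touch optimistic --- the odd-order field automorphisms with $q=2^r$ already contribute on the order of $q^{-1}$, so the honest bound is $O(q^{-1}\log\log q)$, matching the paper's $\alpha_2$ term and still comfortably $o(1)$.
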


\begin{proof}
Here $H_0 = D_{2(q-\e)/d}$ and $|\O| = \frac{1}{2}q(q+\e)$, where $\e = 1$ if $H$ is of type ${\rm GL}_{1}(q) \wr S_2$, otherwise $\e=-1$. We proceed by  estimating the contributions to $\mathcal{Q}(G,3)$ from the various elements of prime order in $H$. Both cases are very similar, so for brevity we will assume that $H$ is of type ${\rm GL}_{1}(q) \wr S_2$. Let $x \in H$ be an element of prime order $r$ and recall that $i_2(H)$ denotes the number of involutions in $H$.

If $x$ is unipotent, then $r=p=2$, $|x^G| = q^2-1 = b_1$ and $|x^G \cap H| = i_2(H) = q-1 = a_1$. Similarly, if $x$ is a semisimple involution, then $|x^G| \geqs \frac{1}{2}q(q-1) = b_2$ and we note that $i_2(H) \leqs q = a_2$. Next suppose $x$ is semisimple and $r \geqs 3$, so $r$ divides $q-1$, $|x^{G_0}| = q(q+1)$ and $|x^{G_0} \cap H| = 2$. Since $G_0$ has $\frac{1}{2}(r-1) \leqs \frac{1}{2}(q-2)$ distinct conjugacy classes of such elements, it follows that the combined contribution to $\mathcal{Q}(G,3)$ from semisimple elements of odd order is at most
\[
\sum_{r \in \pi} \frac{1}{2}(r-1) \cdot \frac{8}{q^2(q+1)^2} < \frac{4(q-2)\log q}{q^2(q+1)^2} = \a_1,
\]
where $\pi$ is the set of odd prime divisors of $q-1$. 

Finally, suppose $q=q_0^r$ and $x$ is a field automorphism of order $r$. If $r=2$ then $|x^G| \geqs \frac{1}{2}q^{1/2}(q+1) = b_3$ and an easy calculation shows that $H$ contains at most $a_3 = 2q^{1/2}$ of these elements. Now assume $r$ is odd, so 
\[
|x^G \cap H| = \frac{q-1}{q_0-1},\;\; |x^G| = \frac{q(q^2-1)}{q_0(q_0^2-1)}
\]
and there are $r-1$ distinct conjugacy classes of field automorphisms of order $r$. If $q_0 = 2$ then $q=2^r$ and the contribution from field automorphisms is equal to 
\[
(r-1) \cdot \frac{36(2^r-1)}{2^{2r}(2^r+1)^2} < 2^{-r} = q^{-1}.
\]
Similarly, if $q_0 \geqs 3$ then the combined contribution from odd order field automorphisms is given by
\[
\sum_{r \in \pi'}(r-1) \cdot \frac{q_0^2(q_0+1)^2}{q^2(q+1)^2}\cdot \frac{q-1}{q_0-1} < \sum_{r \in \pi'}3(r-1)q^{-3\left(1-\frac{1}{r}\right)} < q^{-1}\log\log q = \a_2,
\]
where $\pi'$ is the set of odd prime divisors of $f = \log_pq$.

In conclusion,
\[
\mathcal{Q}(G,3) < \sum_{i=1}^{3}a_i^3/b_i^2 + \a_1 +\a_2 < 2q^{-\frac{1}{2}}
\]
for all $q>37$. In addition, this upper bound gives $\mathcal{Q}(G,3) < 1$ if $q>13$. The remaining groups with $q \leqs 13$ can be checked using {\sc Magma}.
\end{proof}

\begin{lem}\label{l:l2c2}
If $G_0 = {\rm L}_{2}(q)$ and $H$ is of type ${\rm GL}_{1}(q) \wr S_2$, then $b(G,H) \leqs 3$, with equality if and only if ${\rm PGL}_{2}(q) < G$.
\end{lem}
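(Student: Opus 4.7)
The upper bound $b(G,H) \leq 3$ is already in Lemma \ref{l:l2base}, so the task is to determine exactly when equality fails. The plan is to identify $\Omega$ with the set of $2$-element subsets of $\mathbb{P}^1(\mathbb{F}_q) = \mathbb{F}_q \cup \{\infty\}$, fix the distinguished pair to be $\{0, \infty\}$, and use the explicit description of its stabiliser in $\tilde{G} = {\rm PGL}_{2}(q)$ as the dihedral group $D_{2(q-1)}$ of maps $x \mapsto \lambda x$ and $x \mapsto \mu/x$.

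For the backward direction $G \leq \tilde{G} \Longrightarrow b(G,H) = 2$, I take $\omega = \{0, c\}$ for any $c \in \mathbb{F}_q^{\times}$. A direct inspection shows that the only element of the dihedral stabiliser that also fixes $\omega$ is the identity: $x \mapsto \lambda x$ preserves $\omega$ only when $\lambda = 1$, and $x \mapsto \mu/x$ sends $0$ to $\infty \notin \omega$. Since $G \leq \tilde{G}$, this gives a base of size $2$.

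For the forward direction $\tilde{G} < G \Longrightarrow b(G,H) > 2$, I show that every pair $\omega \neq \{0, \infty\}$ has a non-trivial $G$-stabiliser. If $\omega = \{a,b\}$ is disjoint from $\{0, \infty\}$, the involution $x \mapsto ab/x$ is already in $\tilde{G} \subseteq G$ and fixes both pairs. If $\omega = \{0, c\}$ (the case $\{\infty, c\}$ is symmetric), pick a non-trivial $\phi^j \in G$; then $x \mapsto (c/\phi^j(c))\phi^j(x)$ lies in $G$ (the scalar factor comes from $\delta \in \tilde{G} \subseteq G$), is non-trivial because $\phi^j \neq 1$, and stabilises both pairs.

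The main obstacle is the intermediate case where $G$ contains a non-trivial field automorphism but not $\delta$, so that $G \cap \tilde{G} = G_0$ with $G \neq G_0$ and neither $G \leq \tilde{G}$ nor $\tilde{G} < G$ holds. Here the $1$-intersecting pair $\{0, c\}$ no longer works: for $p$ odd, $c/\phi^j(c) = \mu^{k(1-p^j)}$ with $1-p^j$ even, so this scalar is automatically a square in $\mathbb{F}_q^{\times}$ and the composite $x \mapsto (c/\phi^j(c))\phi^j(x)$ falls into $G_0$, giving a non-trivial stabiliser. I would instead take a disjoint pair $\{a,b\}$ chosen so that $-ab$ is a non-square (placing the involution $x \mapsto ab/x$ in $\tilde{G} \setminus G_0$, hence outside $G$) and $a/b$ lies in no proper subfield of $\mathbb{F}_q$ (ruling out torus--field-automorphism composites for the fix case and, via the bound $1+p^j < q$, also for the swap and anti-diagonal cases). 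Choosing $a=1, b=\mu$ when $q \equiv 1 \pmod 4$ and $a=1, b=\mu^2$ when $q \equiv 3 \pmod 4$ meets both conditions for all sufficiently large $q$; any genuinely small $q$ can be dispatched by direct {\sc Magma} computation.
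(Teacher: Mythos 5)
Your proposal is essentially correct and arrives at the same trichotomy the paper uses (namely $G \leqs \tilde G$, then $\tilde G < G$, then $G \cap \tilde G = G_0 < G$ with $\tilde G = {\rm PGL}_2(q)$), but the route through each case is noticeably more self-contained: where the paper cites the subdegree table of \cite{FI} and the regular-orbit computation of \cite{BG_saxl} to get $b = 2$ for $G = \tilde G$ and then lifts to $\tilde G < G$ by a regular-orbit counting argument, you give direct M\"{o}bius-transformation constructions in both directions (the explicit base $\{0,\infty\},\{0,c\}$ for $G \leqs \tilde G$; the explicit stabiliser elements $z \mapsto ab/z$ and $z \mapsto (c/\phi^j(c))\phi^j(z)$ for $\tilde G < G$). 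For the hard intermediate case your disjoint pair with $-ab$ a non-square and $a/b$ generating $\mathbb{F}_q$ plays exactly the role of the paper's $\{\langle e_1 - e_2\rangle, \langle e_1 + \mu e_2\rangle\}$, and the two conditions you isolate are precisely what the paper's matrix calculation is checking. Two small quibbles: the phrase \emph{falls into $G_0$} is wrong (the composite lies in the coset $G_0\phi^j$, not in $G_0$; and indeed for $G = \langle G_0, \delta\phi^{f/2}\rangle$ the $\{0,c\}$ pair actually does work because $G$ contains no pure field automorphism), though this does not affect the validity of switching to a disjoint pair; and you defer the small-$q$ verification to {\sc Magma}, whereas a short order argument shows the conditions $(q-1)/2 \mid p^j + 1$ or $p^j - 1$ with $0 < j < f$ can only occur for the already-excluded values $q \in \{3,5,9\}$, so in fact no {\sc Magma} check is needed here.
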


\begin{proof}
Here $H_0 = D_{2(q-1)/d}$, $|\O| = \frac{1}{2}q(q+1)$ and we may identify $\O$ with the set of distinct pairs of $1$-dimensional subspaces of $V$. By Lemma \ref{l:l2base}, we have $b(G,H) \leqs 3$. In fact, we claim that 
$\{\a,\b,\gamma\}$ is a base for $G$, where   
\[
\a = \{ \la e_1 \ra, \la e_2 \ra\},\; \b = \{ \la e_1 \ra, \la e_1+e_2 \ra\},\; \gamma = \{ \la e_1 \ra, \la e_1+\mu e_2 \ra\}.
\]
To see this, suppose $x = A \phi^j$ fixes $\a$, $\b$ and $\gamma$, where $A \in {\rm GL}_{2}(q)$ and $0 \leqs j < f$. We need to show that $A \in Z({\rm GL}_{2}(q))$ and $j=0$, which is a routine calculation. For example, one checks that $x$ fixes $\a$ and $\b$ if and only if $A \in Z({\rm GL}_{2}(q))$, and then it also fixes $\gamma$ if and only if $\mu = \mu^{p^j}$. Since $\mu$ is a generator for $\mathbb{F}_{q}^{\times}$, it follows that $j=0$ and this justifies the claim.

As explained in \cite[Example 2.5]{BG_saxl} (also see \cite[Table 2]{FI}), if $G = {\rm PGL}_{2}(q)$ then $H$ has a unique regular orbit on $\O$ and thus $b(G,H) = 2$. As an immediate consequence, we deduce that $b(G,H)= 3$ if ${\rm PGL}_{2}(q)<G$ (indeed, if $G_{\a}$ has a regular orbit, then the stabiliser of $\a$ in ${\rm PGL}_{2}(q)$ has at least $|G:{\rm PGL}_{2}(q)|$ regular orbits). Let us also observe that   
\[
\mathcal{P}({\rm PGL}_{2}(q),2) = \frac{|G|}{|\O|^2} = \frac{4(q-1)}{q(q+1)},
\]
which tends to $0$ as $q$ tends to infinity. 

Since ${\rm PGL}_{2}(q)$ has a trivial $2$-point stabiliser, we immediately deduce that $b(G,H) = 2$ if $G = G_0$. Moreover, by arguing as in the proof of \cite[Lemma 7.9]{BH_UDN} for example, one can show that if $q$ is odd then $(G_0)_{\a}$ has exactly $\frac{1}{4}(q+m)$ regular orbits, where $m = 7$ if $q \equiv 1 \imod{4}$, otherwise $m=5$. Therefore, if $q$ is odd then 
\[
\mathcal{P}({\rm L}_{2}(q),2) = \frac{(q-1)(q+m)}{2q(q+1)},
\]
which tends to $\frac{1}{2}$. 

Finally, to complete the proof we may assume that $q$ is odd and $G \cap {\rm PGL}_{2}(q) = G_0$. Here either $G = \la G_0, \phi^j \ra$ for some $j$ with $0 \leqs j < f$, or $G = \la G_0, \delta\phi^{j}\ra$ with $0<j<f$ and $f/(f,j)$ even. In both cases, we claim that $\{\a,\b\}$ is a base for $G$, where
\[
\a = \{\la e_1 \ra, \la e_2 \ra\},\;\; \b = \{\la e_1-e_2 \ra, \la e_1+\mu e_2\ra\}.
\]
To see this, let $x = AB^i\phi^j$, where $A \in {\rm SL}_{2}(q)$, $B = {\rm diag}(\mu,1) \in {\rm GL}_{2}(q)$ and either $i=0$ and $0 \leqs j < f$, or $1 \leqs i < q-1$ and $0<j<f$. It suffices to show that $x$ fixes $\a$ and $\b$ if and only if $A = \pm I_2$ and $i=j=0$. So let us assume $x$ fixes $\a$ and $\b$. Since $x$ fixes $\a$, it follows that $AB^i$ is either diagonal or anti-diagonal. 

Suppose $AB^i = {\rm diag}(a\mu^i,a^{-1})$ is diagonal. If $x$ fixes both spaces in $\b$, then 
\begin{align*}
(e_1-e_2)^x & = a\mu^ie_1-a^{-1}e_2 = \l(e_1-e_2) \\
(e_1+\mu e_2)^x & = a\mu^ie_1+a^{-1}\mu^{p^j}e_2 = \eta(e_1+\mu e_2)
\end{align*}
for some $\l,\eta \in \mathbb{F}_{q}^{\times}$. The first condition gives $a^2 = \mu^{-i}$ and using the second we deduce that $\mu^{p^j-1} =1$. Since $\mu$ has (multiplicative) order $q-1$, it follows that $j=0$ and thus $i=0$ and $a^2=1$, so $A = \pm I_2$ as required.
Similarly, if $x$ interchanges the two $1$-spaces in $\b$, then we deduce that $\mu^{p^j+1} = 1$, which contradicts the fact that $\mu$ has order $q-1$.

Now suppose $AB^i = \left(\begin{smallmatrix} 0 & a \\ -a^{-1}\mu^i & 0 \end{smallmatrix}\right)$ is anti-diagonal. If $x$ fixes both spaces in $\b$, then
\begin{align*}
(e_1-e_2)^x & = -ae_1-a^{-1}\mu^ie_2 = \l(e_1-e_2) \\
(e_1+\mu e_2)^x & = a\mu^{p^j}e_1-a^{-1}\mu^ie_2 = \eta(e_1+\mu e_2)
\end{align*}
for scalars $\l,\eta \in \mathbb{F}_{q}^{\times}$. These conditions imply that $\mu^{p^j+1} = 1$, which is a contradiction as above. Finally, if 
$x$ interchanges both spaces in $\b$ then we get $\mu^{i-1}=a^2$ and $\mu^{p^j-1}=1$. The latter condition implies that $j=0$, which forces $i=0$ and thus $\mu = a^{-2}$ is a square in $\mathbb{F}_q$. Once again we have reached a contradiction since $\mu$ is a generator for $\mathbb{F}_q^{\times}$.
\end{proof}

\begin{lem}\label{l:l2c3}
If $G_0 = {\rm L}_{2}(q)$ and $H$ is of type ${\rm GL}_{1}(q^2)$, then $b(G,H) \leqs 3$, with equality if and only if ${\rm PGL}_{2}(q) \leqs G$.
\end{lem}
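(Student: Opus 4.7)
The proof is a direct analogue of Lemma~\ref{l:l2c2}. The upper bound $b(G,H) \leqs 3$ is already given by Lemma~\ref{l:l2base}, so the content lies in pinning down when equality holds. I identify $\O$ with the set of unordered pairs $\{L,L^\sigma\}$ of Galois-conjugate $1$-spaces in $V \otimes_{\mathbb{F}_q}\mathbb{F}_{q^2}$ not defined over $\mathbb{F}_q$, where $\sigma$ generates $\mathrm{Gal}(\mathbb{F}_{q^2}/\mathbb{F}_q)$; each pair is the fixed-point set in $\mathbb{P}^1(\mathbb{F}_{q^2})$ of a non-split maximal torus of $\mathrm{PGL}_2(q)$, whose setwise stabiliser is the dihedral group $D_{2(q+1)}$ under consideration.

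For the forward direction, suppose $\mathrm{PGL}_2(q) \leqs G$. Citing \cite[Example 2.5]{BG_saxl} and the non-split-torus row of \cite[Table 2]{FI} (the analogues for the non-split torus of the facts invoked in Lemma~\ref{l:l2c2}), the dihedral subgroup $\tilde H = D_{2(q+1)}$ has no regular orbit on $\O$, forcing $b(G,H) \geqs 3$. A short self-contained argument: given $\a = \{L_1,L_1^\sigma\}$ and $\b = \{L_2,L_2^\sigma\}$ in $\O$, sharp $3$-transitivity of $\mathrm{PGL}_2(q^2)$ on $\mathbb{P}^1(\mathbb{F}_{q^2})$ supplies a unique involution $g \in \mathrm{PGL}_2(q^2)$ swapping $L_1 \leftrightarrow L_1^\sigma$ and $L_2 \leftrightarrow L_2^\sigma$; a direct check that $g$ and $g^\sigma$ agree on these four points, combined with sharp $3$-transitivity once more, forces $g^\sigma = g$, so $g \in \mathrm{PGL}_2(q) \cap G_{\a,\b} \setminus \{1\}$.

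For the converse I may assume $q$ is odd and $G \cap \mathrm{PGL}_2(q) = G_0$ (when $q$ is even, $\mathrm{PGL}_2(q) = G_0 \leqs G$ automatically and the forward direction applies). I propose the explicit base
\[
\a = \{\la e_1+\nu e_2\ra,\la e_1+\bar\nu e_2\ra\},\qquad \b = \{\la e_1+\mu\nu e_2\ra,\la e_1+\mu\bar\nu e_2\ra\},
\]
where $\nu \in \mathbb{F}_{q^2}\setminus \mathbb{F}_q$ and $\bar\nu = \nu^q$. Writing $x \in G$ in the form $A\phi^j$ or $A\delta\phi^j$ with $A \in \mathrm{SL}_2(q)$ as in Lemma~\ref{l:l2c2}, the hypothesis that $x$ stabilises both $\a$ and $\b$ forces $x$ to permute the four $1$-spaces above as either the identity or the unique block-preserving double-swap involution from the first half. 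The identity case runs verbatim parallel to the closing argument of Lemma~\ref{l:l2c2}: multiplicative relations of the form $\mu^{p^j \pm 1}=1$ emerge, contradicting the order of $\mu$ unless $A$ is scalar and $j=0$. In the involution case, an explicit matrix computation (taking, say, $\nu^2 \in \mathbb{F}_q\setminus(\mathbb{F}_q^\times)^2$, so that $\bar\nu = -\nu$) shows the double-swap to be $\mathrm{diag}(a,-a)$ of determinant $-a^2$, which lies in $\mathrm{PGL}_2(q)\setminus \mathrm{PSL}_2(q)$ exactly when $q \equiv 3 \imod 4$; the complementary case $q \equiv 1 \imod 4$ is handled by a different choice of $\nu$ with a genuinely quadratic minimal polynomial, ensuring the more general determinant formula $-a^2(1+uv)$ — with $u,v$ rational in $\mu$ and the minimal-polynomial coefficients — is a non-square.

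The main obstacle is this squareness check in the involution case. Its resolution hinges on the observation that, as $\nu$ ranges over admissible elements of $\mathbb{F}_{q^2}\setminus \mathbb{F}_q$, the relevant determinant expression traverses both square and non-square classes of $\mathbb{F}_q^\times$; the existence of a good $\nu$ in each residue class of $q$ modulo $4$ follows by a direct count. Once the double-swap involution is placed outside $G_0 = \mathrm{PSL}_2(q)$, the field-automorphism analysis closes the argument by the same $\mu^{p^j \pm 1}=1$ trick used at the end of Lemma~\ref{l:l2c2}, with $\nu \in \mathbb{F}_{q^2}$ playing the bookkeeping role that $\mu \in \mathbb{F}_q$ played there.
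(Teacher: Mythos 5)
Your identification of $\O$ with unordered Galois-conjugate pairs $\{L,L^\sigma\}$ is essentially equivalent to the paper's unitary model (orthogonal pairs of non-degenerate $1$-spaces over $\mathbb{F}_{q^2}$), so the overall framework agrees. Your forward direction, however, is genuinely nicer than the paper's citation of a subdegree table: the observation that the unique involution in $\mathrm{PGL}_2(q^2)$ swapping $L_1\leftrightarrow L_1^\sigma$ and $L_2\leftrightarrow L_2^\sigma$ must be Galois-stable (hence in $\mathrm{PGL}_2(q)$) is short, self-contained, and works for every pair $\a,\b$ simultaneously. This part is correct and well worth keeping.

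The converse direction has a real gap. You assert that stabilising both $\a$ and $\b$ ``forces $x$ to permute the four $1$-spaces \ldots as either the identity or the unique block-preserving double-swap involution,'' but this excludes the two mixed actions (fix $\a$ pointwise and swap $\b$, or vice versa), and nothing in your argument rules them out. Indeed, whether the unique involution in the non-split torus fixing $L_1$ and $L_1^\sigma$ also swaps $L_2\leftrightarrow L_2^\sigma$ is a nontrivial condition on $\b$: for your simplified choice $\bar\nu=-\nu$, a short cross-ratio computation shows the mixed case occurs precisely when $\mu+\mu^{-1}=0$, i.e.\ $\mu^2=-1$, so the exclusion is only automatic after a verification you do not make. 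The paper's proof avoids this by explicitly running through all four cases ($A$ diagonal or anti-diagonal, times $x$ fixing or swapping the second pair) and deriving a contradiction in each. Beyond that, your resolution of the ``squareness check'' is announced but not carried out, and the field-automorphism cases ($j>0$) are not computed at all — you only gesture at the ``$\mu^{p^j\pm1}=1$ trick.'' There is also an unaddressed subtlety in how the order-$f$ field automorphism $\phi$ of $\mathrm{L}_2(q)$ should act on $V\otimes_{\mathbb{F}_q}\mathbb{F}_{q^2}$ (the natural extension has order $2f$ and acts as $\sigma$ at the $f$-th power, which interacts with your conjugate pairs in a way that must be made explicit). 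The paper sidesteps all of this by redefining $\phi$ to be a field automorphism of $X_0=\mathrm{U}_2(q)$ acting as $\eta\mapsto\eta^p$ on $\mathbb{F}_{q^2}$ and by choosing the base point $\b$ using a generator $\lambda$ of $\mathbb{F}_{q^2}^\times$, so that the closing relations have the clean form $\lambda^{(q+1)(p^j\pm1)}=1$ and immediately force $i=j=0$. To finish your proof you would need to (i) justify or drop the identity/double-swap dichotomy by working through all four stabiliser cases, (ii) carry out the determinant/squareness computation for a specific $\nu$ in both residue classes of $q$ modulo $4$, and (iii) treat the semilinear cases $A\phi^j$ and $A\delta\phi^j$ with $j>0$ explicitly, being careful about how $\phi$ extends to the quadratic extension.
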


\begin{proof}
Here $H_0 = D_{2(q+1)/d}$, $|\O| = \frac{1}{2}q(q-1)$ and Lemma \ref{l:l2base} gives $b(G,H) \leqs 3$ and $\mathcal{P}(G,3) \to 1$ as $q \to \infty$. The subdegrees for the action of ${\rm PGL}_{2}(q)$ are presented in \cite[Table 2]{FI} and we see that there is no suborbit of size $2(q+1)$. Therefore, $b(G,H) = 3$ if ${\rm PGL}_{2}(q) \leqs G$.

To complete the proof, we may assume that $q$ is odd and $G \cap {\rm PGL}_{2}(q) = G_0$. The subdegrees for the action of $G = G_0$ are computed in \cite[Lemma 7.9]{BH_UDN} and we deduce that $b(G,H) = 2$ and 
\[
\mathcal{P}(G,2) = \frac{(q+1)(q-m)}{2q(q-1)},
\]
where $m =1$ if $q \equiv 1 \imod{4}$ and $m=3$ if $q \equiv 3 \imod{4}$. In particular, $\mathcal{P}(G,2) \to \frac{1}{2}$ as $q \to \infty$. As in the proof of the previous lemma, it now remains to consider the following two cases:
\begin{itemize}\addtolength{\itemsep}{0.2\baselineskip}
\item[{\rm (a)}] $G = \la G_0, \phi^j\ra$ with $1 \leqs j < f$;
\item[{\rm (b)}] $G = \la G_0, \delta\phi^{j}\ra$ with $1 \leqs j < f$ and $f/(f,j)$ even.  
\end{itemize}

In both cases, we claim that $b(G,H) = 2$. To show this, it will be useful to identify $G_0$ with the unitary group $X_0 = {\rm U}_{2}(q)$ and $\O$ with the set of orthogonal pairs of non-degenerate $1$-dimensional subspaces of the natural module $U$ for $X_0$ over $\mathbb{F}_{q^2}$. Fix an orthonormal basis $\{u,v\}$ for $U$ with respect to the defining unitary form on $U$ and set $\a = \{\la u \ra, \la v \ra \} \in \O$. Observe that
\[
\O = \{\a\} \cup \{ \omega_{\xi} \,:\, \xi \in \mathbb{F}_{q^2}^{\times},\, \xi^{q+1} \ne -1\},
\]
where $\omega_{\xi} = \{ \la u+\xi v \ra, \la u-\xi^{-q}v\ra\}$. Note that $\omega_{\xi} = \omega_{-\xi^{-q}}$.

For the remainder of this proof, we will abuse notation by writing $\phi$ for the field automorphism of $X_0$ that corresponds to the map $\eta \mapsto \eta^{p}$ on $\mathbb{F}_{q^2}$. In particular, we will assume that  
\[
(au+bv)^{\phi} = a^pu + b^pv
\]
for all $a,b \in \mathbb{F}_{q^2}$. Now $X_0 \cap \la \phi \ra = \la \phi^f \ra$ and $\la X_0, \phi \ra = X_0.f$. With this set up, the two cases we need to consider are as described in (a) and (b) above, with $G_0$ replaced by $X_0$. Note that in (b),  the diagonal automorphism $\delta$ is the image of a diagonal matrix ${\rm diag}(\l^{q-1},1) \in {\rm GU}_{2}(q)$ with respect to the basis $\{u,v\}$ for $U$, where $\mathbb{F}_{q^2}^{\times} = \la \l \ra$. 

We claim that $\{\a,\b\}$ is a base for $G$, where 
\[
\b = \{ \la u+ \l v \ra, \la u - \l^{-q}v\ra\}.
\]
To see this, let $x = AB^i\phi^j$, where 
\[
A = \left(\begin{array}{cc}
a & b \\
c & d 
\end{array}\right) \in {\rm SU}_{2}(q),\;\; B = \left(\begin{array}{cc}
\l^{q-1} & 0 \\
0 & 1
\end{array}\right) \in {\rm GU}_{2}(q)
\]
and $0 \leqs j < 2f$ with $j \ne f$. In addition, assume that either $i=0$, or $1\leqs i < q+1$ and $0<j<2f$. Then to justify the claim, it suffices to show that $x$ fixes $\a$ and $\b$ if and only if  $A = \pm I_2$ and $i=j=0$.

Let us assume $x$ fixes $\a$ and $\b$. Since $x$ fixes $\a$, it is of the form  
\[
\left(\begin{array}{cc}
a\l^{i(q-1)} & 0 \\
0 & a^{-1} 
\end{array}\right)\phi^j \; \mbox{ or } \; \left(\begin{array}{cc}
0 & a \\
-a^{-1}\l^{i(q-1)} & 0 
\end{array}\right)\phi^j,
\]
according to whether or not $x$ fixes or interchanges the two $1$-spaces in $\a$. Note that $a^{q+1} = 1$ since $A \in {\rm SU}_{2}(q)$. This gives us two cases to consider.

Suppose $A$ is diagonal and $x$ fixes the two subspaces comprising $\b$. By direct calculation, we deduce that
\begin{equation}\label{e:a}
a^2 = \l^{p^j - i(q-1)-1} = \l^{q-qp^j-i(q-1)},
\end{equation}
whence $\l^{(q+1)(p^j-1)} = 1$ and thus $q^2-1$ divides $(q+1)(p^j-1)$ (recall that $\mathbb{F}_{q^2}^{\times} = \la \l \ra$). Since $j \ne f$ we immediately deduce that $j=0$ is the only possibility. Therefore $i=0$ (recall that $i \geqs 1$ only if $j>0$) and thus \eqref{e:a} implies that $a^2 = 1$, so $A = \pm I_2$. Similarly, if $A$ is diagonal and $x$ interchanges the spaces in $\b$, then $\l^{(q+1)(p^j+1)} = 1$ and this is incompatible with the fact that $\l$ has (multiplicative) order $q^2-1$.

Now assume $A$ is anti-diagonal. If $x$ fixes the two spaces in $\b$ then 
$\l^{(q+1)(p^j+1)} = 1$, which is a contradiction as above. On the other hand, if $x$ swaps the spaces in $\b$ then  
\[
a^2 = \l^{i(q-1)-p^j+q} = \l^{i(q-1)+qp^j-1}
\]
and thus $\l^{(q+1)(p^j-1)}=1$. As above, it follows that $i=j=0$ and thus $a^2 = \l^{q-1}$. But $a^{q+1}=1$ so we have $\l^{(q^2-1)/2}=1$ and once again we have reached a contradiction.

This justifies the claim and we conclude that $b(G,H) = 2$ in cases (a) and (b) above. This completes the proof of the lemma.
\end{proof}

\begin{lem}\label{l:l2sub}
Suppose $G_0 = {\rm L}_{2}(q)$, where $q = 3^k$ and $k$ is an odd prime. If $H$ is of type ${\rm GL}_{2}(3)$, then $b(G,H) = 2$ and 
$\mathcal{P}(G,2)\to 1$ as $q \to \infty$. 
\end{lem}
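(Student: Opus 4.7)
The plan is to apply the probabilistic method of Lemma \ref{l:base}, proving both $\mathcal{Q}(G,2) < 1$ (which forces $b(G,H) \leqs 2$) and $\mathcal{Q}(G,2) \to 0$ as $q \to \infty$ (which delivers $\mathcal{P}(G,2) \to 1$). The matching lower bound $b(G,H) \geqs 2$ is automatic since $H$ is a proper subgroup of $G$.

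First I would pin down the structure of $H$. Since $k$ is an odd prime we have $q = 3^k \equiv 3 \imod{8}$, so $S_4 \not\leqs G_0$ and hence $H_0 = H \cap G_0 \cong A_4$. Because the field automorphism $\phi$ of order $k$ acts trivially on $\mathbb{F}_3$, it centralises the subfield copy of $\mathrm{PGL}_{2}(3) \cong S_4$ containing $H_0$; it follows that the normaliser of $H_0$ in $\mathrm{Aut}(G_0)$ is $S_4 \times C_k$, and hence $|H| \leqs 24k$.

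The next step is to enumerate the $G$-classes of prime-order elements meeting $H$. Three families arise: semisimple involutions (all confined to the $S_4$ factor, so at most $9$ elements of $H$, in classes of size $\geqs q(q-1)/2$); unipotent elements of order $3$ coming from $A_4$ (at most $8$ elements, in at most two $G$-classes of size $\geqs (q^2-1)/2$); and elements of order $k$ involving $\phi$ (at most $O(k)$ elements, centralised by copies of $\mathrm{PSL}_{2}(3) \cong A_4$, hence in classes of size $\geqs |G_0|/12 = q(q^2-1)/24$). Applying Lemma \ref{l:calc} to each family yields a bound of the shape $\mathcal{Q}(G,2) = O(1/q) + O(k^2/q^3)$, which visibly tends to zero, establishing the asymptotic statement. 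For all $q \geqs 3^5$ the same bound is comfortably below $1$.

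The main obstacle is the boundary case $q = 27$ (so $k = 3$), where the coincidence that field automorphisms also have order $3$ produces up to $18$ extra elements of order $3$ in $H$, of the form $g\phi^{\pm 1}$ with $g \in A_4$. A careful count shows the field-automorphism-type elements contribute at most $18^2/819 \approx 0.40$ to $\mathcal{Q}(G,2)$, the involutions contribute at most $9^2/351 \approx 0.23$, and the unipotent elements at most $8^2/364 \approx 0.18$. The total stays strictly below $1$, so the probabilistic method handles this case as well, with no computer calculation required.
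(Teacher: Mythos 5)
Your proof is correct, and the main line of attack (bound $\mathcal{Q}(G,2)$ via Lemma \ref{l:calc}, using $|H|\leqslant 24k$ and class-size lower bounds, then invoke Lemma \ref{l:base}) matches the paper's for $q\geqslant 3^5$. The genuine point of departure is $q=27$: there the crude estimate $a_1^2/b_1 = (24k)^2/(\tfrac12 q(q-1))$ exceeds $1$ (it is about $14.8$ when $G={\rm Aut}(G_0)$), and the paper simply defers to a {\sc Magma} check, whereas you refine the estimate by hand. Your refinement works: the three families of prime-order elements in $H\leqslant S_4\times C_3$ have the class-size lower bounds you state ($|x^G|\geqslant q(q-1)/2=351$ for involutions since $q\equiv 3\pmod 4$; $|x^G|\geqslant (q^2-1)/2=364$ for the eight order-$3$ elements in $A_4$, which are unipotent; and $|x^G|\geqslant |G_0|/|{\rm L}_2(3)|=819$ for the up-to-$18$ order-$3$ elements of $H$ outside $G_0$, which are field automorphisms of order $3$ with $C_{G_0}(x)\cong{\rm L}_2(3)$), and indeed $324/819 + 81/351 + 64/364 \approx 0.80 < 1$. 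One small inaccuracy: for $q\geqslant 3^5$ (so $k\geqslant 5$) the number of prime-order elements of $H$ outside $G_0$ is only $k-1$, not $O(k)$ in the naive sense of $24k$, since $\gcd(o(\sigma),k)=1$ for $\sigma\in S_4$; this only strengthens your estimate, so it does not affect correctness. Overall your argument is a valid, computer-free alternative to the paper's treatment of the boundary case.
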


\begin{proof}
The case $q = 27$ can be checked using {\sc Magma}, so let us assume $q \geqs 3^5$. Here $H_0 = {\rm L}_{2}(3) \cong A_4$ and $|H| \leqs 24k = a_1$. Now $|x^G| \geqs \frac{1}{2}q(q-1) = b_1$ for all $x \in H$ of prime order (minimal if $x$ is an involution) and thus $\mathcal{Q}(G,2)<a_1^2/b_1$. It is routine to check that this upper bound is less than $q^{-1/2}$ if $q>3^5$ and it is less than $1$ if $q = 3^5$. 
\end{proof}

\begin{lem}\label{l:s4}
Suppose $G_0 = {\rm L}_{2}(q)$ and $H$ is of type $2^{1+2}_{-}.{\rm O}_{2}^{-}(2)$, where $q = p \geqs 7$. Then $b(G,H) = 2+\delta_{7,q}$ and 
$\mathcal{P}(G,2)\to 1$ as $q \to \infty$. 
\end{lem}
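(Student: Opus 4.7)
The plan is to apply the probabilistic method of Lemma~\ref{l:base} with $c=2$, isolating $q=7$ as the unique exception.

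First I would observe that $|H|$ is bounded above by an absolute constant independent of $p$. Since $q = p$ is prime we have ${\rm Out}(G_0) = C_2$ and hence $|H:H_0| \leqs 2$, while the type $2^{1+2}_{-}.{\rm O}_{2}^{-}(2)$ forces $H_0$ to be a quotient of $2^{1+2}_{-}.S_3$ by a central involution, giving $|H_0| \leqs 24$. Thus $|H| \leqs 48$ uniformly in $p$.

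Next I would estimate $\mathcal{Q}(G,2)$. Every element of prime order in $G$ lies in a $G$-class of size at least $\frac{1}{2}p(p-1)$, the minimum being attained by a semisimple involution whose centraliser in $G_0$ normalises a non-split torus. So Lemma~\ref{l:calc} applied with $A = |H|$ and $B = \frac{1}{2}p(p-1)$ gives
\[
\mathcal{Q}(G,2) \leqs \frac{|H|^2}{B} \leqs \frac{2 \cdot 48^2}{p(p-1)},
\]
which tends to $0$ as $p \to \infty$ and is less than $1$ once $p$ is sufficiently large. By Lemma~\ref{l:base} this yields $b(G,H) \leqs 2$, and combined with the trivial lower bound $b(G,H) \geqs 2$ we deduce $b(G,H) = 2$ for all large enough primes; the same decay also yields $\mathcal{P}(G,2) \to 1$.

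For the remaining finitely many small primes I would use the MAGMA techniques of Section~\ref{ss:comp}: for each such $p \geqs 11$, a random search should quickly produce a pair of points whose joint stabiliser is trivial, confirming $b(G,H) = 2$. The case $q = 7$ must be handled separately. Here $G_0 = {\rm L}_{2}(7) \cong {\rm L}_{3}(2)$ acts $2$-transitively on $|\O| = 7$ points, so every $2$-point stabiliser in $G_0$ has order $168/(7 \cdot 6) = 4 > 1$ and hence $b(G,H) \geqs 3$. The reverse inequality $b(G,H) \leqs 3$ is immediate by random search (or by exhibiting three points in general position), giving $b(G,H) = 3$ as required.

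The hard part will be not the asymptotic estimate but the finite verification: the bound $\mathcal{Q}(G,2) < 1$ only becomes effective for $p$ somewhere in the low hundreds, so one must perform a MAGMA sweep over a non-trivial but finite list of smaller primes, and in particular argue carefully that $q = 7$ is the unique prime producing $b(G,H) = 3$ in order to account for the $\delta_{7,q}$ term in the statement.
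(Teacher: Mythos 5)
Your approach is essentially identical to the paper's: bound $\mathcal{Q}(G,2)$ using a constant upper bound on $|H|$ together with $|x^G| \geqs \tfrac{1}{2}q(q-1)$ for all prime-order $x$, then handle the finitely many remaining small primes with {\sc Magma}. The paper obtains the sharper bound $|H| \leqs 24$ from \cite[Proposition 4.6.7]{KL} (so only $q \leqs 31$ needs checking, rather than $q \leqs 67$ with your $|H| \leqs 48$), and treats $q=7$ inside that computation rather than separately; your explicit argument for $q=7$ is correct, though the lower bound $b \geqs 3$ there already follows from Lemma~\ref{l:easy} since $|G| > |\O|^2$.
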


\begin{proof}
Here $q = p \geqs 7$ and $H_0 = A_4.c$, where $c=2$ if $p \equiv \pm 1 \imod{8}$, otherwise $c=1$ (see \cite[Proposition 4.6.7]{KL}). Therefore, $|H| \leqs 24 = a_1$ and we note that $|x^G|\geqs \frac{1}{2}q(q-1) = b_1$ for all $x \in H$ of prime order. This yields $\mathcal{Q}(G,2) \leqs a_1^2/b_1$, which is less than $q^{-1/2}$ if $q >109$, and it is less than $1$ if $q > 31$. The remaining cases with $q \leqs 31$ can be checked using {\sc Magma}. 
\end{proof}

This completes the proof of Proposition \ref{p:psl2}.

\section{Groups of Lie type: Parabolic actions}\label{s:parab}

To complete the proof of Theorem \ref{t:as}, we may assume $G$ is an almost simple group of Lie type over $\mathbb{F}_q$ with socle $G_0 \ne {\rm L}_{2}(q)$. We partition these groups into three collections according to $G_0$ and the structure of the maximal subgroup $H$. In this section, we consider the groups where $H$ is a parabolic subgroup; the remaining cases are handled in  Sections \ref{s:class} (classical groups) and \ref{s:excep} (exceptional groups).

\begin{rem}\label{r:isom}
In order to avoid unnecessary repetition, if $G_0$ is a classical group then we will assume it is one of the following:
\[
{\rm L}_{n}^{\e}(q),\, n \geqs 3; \; {\rm PSp}_{4}(q), \, n \geqs 4;\; {\rm P\O}_{n}^{\e}(q),\, n \geqs 7.
\]
In addition, we will assume that $G_0 \ne {\rm L}_{3}(2), {\rm L}_{4}(2), {\rm PSp}_{4}(2)'$ or ${\rm PSp}_{4}(3)$, which is justified by the existence of the following exceptional isomorphisms (see \cite[Proposition 2.9.1]{KL}):
\[
{\rm L}_{3}(2) \cong {\rm L}_{2}(7),\; {\rm L}_{4}(2) \cong A_8,\; {\rm PSp}_{4}(2)' \cong A_6,\; {\rm PSp}_{4}(3) \cong {\rm U}_{4}(2).
\]
Similarly, if $G_0$ is an exceptional group, then we will assume $G_0 \ne {}^2G_2(3)', G_2(2)'$ since ${}^2G_2(3)' \cong {\rm L}_{2}(8)$ and $G_2(2)' \cong {\rm U}_{3}(3)$.
\end{rem}

The main result of this section is the following.  

\begin{prop}\label{p:par}
Let $G \leqs {\rm Sym}(\O)$ be a finite almost simple primitive group with socle $G_0$ and soluble point stabiliser $H$. Set $b=b(G,H)$ and assume $G_0 \ne {\rm L}_{2}(q)$ is a group of Lie type and $H$ is a maximal parabolic subgroup of $G$.  
\begin{itemize}\addtolength{\itemsep}{0.2\baselineskip}
\item[{\rm (i)}] We have $3 \leqs b \leqs 5$, with $b=5$ if and only if $G_0 = {\rm L}_{4}(3)$ and $H$ is of type $P_2$, or $G_0 = {\rm U}_{5}(2)$ and $H$ is of type $P_1$.
\item[{\rm (ii)}] The precise value of $b$ is recorded in Tables \ref{tab:as2} ($G_0$ exceptional) and  \ref{tab:as3} ($G_0$ classical).
\end{itemize}
In addition, $\mathcal{P}(G,3) \to 1$ as $|G| \to \infty$.
\end{prop}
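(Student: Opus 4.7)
The plan is to apply the classification of almost simple primitive groups with soluble point stabilisers due to Li and Zhang \cite{LZ}, which yields a short explicit list of candidate pairs $(G_0,H)$. Solubility of a maximal parabolic is very restrictive: its Levi complement contains a classical or exceptional subsystem subgroup whose solubility forces both the rank and $q$ to be small. For classical $G_0$ this singles out a few possibilities of type $P_m$ with $m$ small in ${\rm L}_n^{\e}(q)$, ${\rm PSp}_n(q)$ and ${\rm P\O}_n^{\e}(q)$ over small fields, and for exceptional $G_0$ it forces $p \in \{2,3\}$ (consistent with Corollary \ref{c:ex}). I would then split the resulting case list into a generic family where the probabilistic method applies uniformly, and a handful of sporadic cases that require direct computation in {\sc Magma} following Section \ref{ss:comp}.

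For the lower bound $b \geqs 3$, the key observation is that a maximal parabolic $H$ admits no regular orbit on $\O = G/H$. One clean way to see this is through the Bruhat decomposition: for any $g \in G$, the intersection $H \cap H^g$ contains a conjugate of a maximal torus of $G$, hence is nontrivial. Equivalently (and sufficient for the cases arising here), one checks that $|H|^2 > |G|$ for every pair on the list, which rules out the existence of a regular orbit of size $|H|$.

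For the upper bound $b \leqs 5$ and the generic value $b = 3$, I would invoke Lemma \ref{l:base} via the sum $\mathcal{Q}(G,3)$ in \eqref{e:Q}. Elements of prime order in $H$ partition into unipotent elements (both in the unipotent radical and in the Levi), semisimple elements of each prime order, and field or graph automorphisms when $G>G_0$. For each type I would apply a lower bound on $|x^G|$ coming from centraliser orders and an upper bound on $|x^G \cap H|$ from a class-count inside $H$; Lemma \ref{l:calc} then reduces each contribution to an explicit rational function of $q$ (and the rank) which tends to $0$ as $|G| \to \infty$. Summing the bounds yields $\mathcal{Q}(G,3) < 1$ for all but finitely many pairs on the list, simultaneously proving $b \leqs 3$ generically and $\mathcal{P}(G,3) \to 1$.

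The main obstacle lies in the small cases where the probabilistic bound is insufficient, and in particular in certifying that $b = 5$ for the two exceptional pairs $(G_0,H) = ({\rm L}_{4}(3), P_2)$ and $({\rm U}_{5}(2), P_1)$, together with pinning down the exact values $b \in \{3,4\}$ for the other small cases tabulated in Tables \ref{tab:as2}--\ref{tab:as3}. For these I would follow the {\sc Magma} protocol of Section \ref{ss:comp}: construct $G$ and $H$ as permutation groups, compare $\lceil \log |G|/\log|\O|\rceil$ from Lemma \ref{l:easy} against the output of a random search for bases of small size, and when these disagree, walk down the stabiliser chain via \texttt{CosetAction} to certify that every $(b-1)$-point stabiliser is nontrivial, in the spirit of Example \ref{e:special}. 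I expect this to settle the two $b=5$ cases as well as the residual $b=4$ rows of the tables.
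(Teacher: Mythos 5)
Your plan has the same architecture as the paper's: reduce to the Li--Zhang list (the paper's Lemma~\ref{l:par1}), establish $b \geqs 3$ from $|G| > |\O|^2$ via Lemma~\ref{l:easy}, handle the infinite parabolic families by showing $\mathcal{Q}(G,3) < 1$, and finish with {\sc Magma} for the remaining small cases, including certifying $b=5$ for $({\rm L}_4(3),P_2)$ and $({\rm U}_5(2),P_1)$ via stabiliser chains. However, the step where you bound $|x^G \cap H|$ from above by ``a class-count inside $H$'' is precisely where the argument would fail as written, and it is where the bulk of the paper's technical work in Section~\ref{s:parab} lies. For a parabolic $H$, the unipotent radical sits entirely inside $H$, so a naive count of elements of order $p$ in $H$ wildly overestimates $|x^G \cap H|$ for, say, a long-root element $x$. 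Concretely, for $G_0 = {\rm Sp}_4(q)$ with $H$ a Borel subgroup and $x$ a long-root ($b_1$-type) involution one has $|x^G| = 2(q^4-1)$ while $|x^G \cap H| = 2(q^2-1)$; replacing the latter by $i_2(H)$, which is of order $q^3$, would make the $b_1$-contribution to $\mathcal{Q}(G,3)$ of order $q$, not $O(q^{-2})$, and the estimate collapses. The paper obtains the correct class-by-class counts by computing $\chi(x) = |C_\O(x)|$ exactly, where $\chi = 1_H^G$ is the permutation character, decomposing $\chi$ into unipotent characters (for ${\rm L}_3^{\e}(q)$ and ${\rm Sp}_4(q)$, using the character tables of Simpson--Frame and Geck--Malle) or into Deligne--Lusztig almost characters evaluated via Green functions (for $G_2(q)$), and then applying $|x^G \cap H| = \chi(x)|x^G|/|\O|$; field, graph and graph-field automorphisms are handled by structural facts such as $C_{H_0}(\phi)$ being a Borel subgroup of $C_{G_0}(\phi)$. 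None of this machinery appears in your proposal, and without it the probabilistic bound does not close.

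One smaller remark: the Bruhat-decomposition argument you sketch for $b \geqs 3$ is an attractive structural alternative, but it needs the caveat that a $\sigma$-stable maximal torus of the ambient algebraic group can have trivial fixed-point group over $\mathbb{F}_2$ (this is ultimately why ${\rm L}_2(q)$ is excluded). Your order-count fallback $|H|^2 > |G|$, which is exactly what the paper checks via Lemma~\ref{l:easy}, is the safer route and suffices for every case on the list.
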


\begin{rem}\label{r:nota}
We adopt the standard notation from \cite{KL} for maximal parabolic subgroups. In particular, if $G_0$ is a classical group with natural module $V$, then $P_m$ denotes the stabiliser of an $m$-dimensional totally singular subspace of $V$. Similarly, if $G_0 = {\rm L}_{n}(q)$, then $P_{m,n-m}$ is the stabiliser of a flag $0<W<U<V$, where $\dim W = m < n/2$ and $\dim U = n-m$. If $G_0 = {\rm P\O}_{8}^{+}(q)$ then we write $P_{1,3,4}$ for a parabolic subgroup $H$ of $G$ such that $H \cap G_0 = L/Z$ and  
\[
L = [q^{11}]{:}[(q-1)/d]^2.\frac{1}{d}{\rm GL}_{2}(q).d^2 < \O_{8}^{+}(q)
\]
with $d = (2,q-1)$ and $Z = Z(\O_{8}^{+}(q))$. Note that in this case, $H$ is maximal and soluble if and only if $q \in \{2,3\}$ and $G \not\leqs {\rm PGO}_{8}^{+}(q)$ (see \cite[Table 8.50]{BHR}).
\end{rem}

To get started, we first determine the cases that we need to consider. As before, we set $H_0= H \cap G_0$. In Table \ref{tab:par}, we write $\phi$ for a field automorphism of $G_0$ of order $f = \log_pq$.

\begin{lem}\label{l:par1}
Let $G$ be a finite almost simple group of Lie type over $\mathbb{F}_q$ with socle $G_0 \ne {\rm L}_{2}(q)$ and a soluble maximal parabolic subgroup $H$. Then one of the following holds:
\begin{itemize}\addtolength{\itemsep}{0.2\baselineskip}
\item[{\rm (i)}] $G_0 \in \{ {\rm L}_n^{\e}(q), {\rm L}_{6}(q), {\rm PSp}_{6}(q), \O_7(q), {\rm P\O}_{8}^{+}(q)\}$ with $n \leqs 5$ and $q \in \{2,3\}$.
\item[{\rm (ii)}] $G_0$ is an exceptional group and one of the following holds:

\vspace{1mm}

\begin{itemize}\addtolength{\itemsep}{0.2\baselineskip}
\item[{\rm (a)}] $G = G_2(3)$ and $H=[3^5]{:}{\rm GL}_{2}(3)$.
\item[{\rm (b)}] $G_0 = {}^3D_4(q)$, $H_0 = [q^{11}]{:}((q^3-1) \circ {\rm SL}_{2}(q)).(2,q-1)$ and $q \in \{2,3\}$.
\item[{\rm (c)}] $G_0 = {}^2F_4(2)'$ and $H_0 = [2^9]{:}5{:}4$ or $[2^{10}]{:}S_3$.
\item[{\rm (d)}] $G = F_4(2).2$ and $H = [2^{22}]{:}S_3^2.2$.
\end{itemize}
\item[{\rm (iii)}] $(G,H)$ is one of the cases recorded in Table \ref{tab:par}.
\end{itemize}
\end{lem}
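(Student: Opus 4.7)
Write $H_0 = H \cap G_0 = UL$, where $U$ is the unipotent radical and $L$ is a Levi complement. Since $U$ is nilpotent, $H_0$ is soluble if and only if $L$ is, and since $H/H_0$ is cyclic, $H$ is soluble if and only if $H_0$ is. The derived group of $L$ is a central product of quasisimple groups of Lie type whose Dynkin diagrams are obtained by deleting a single node from the (twisted) Dynkin diagram of $G_0$. Solubility of $L$ therefore forces every such component to be a soluble group of Lie type, an extremely short list (essentially $\mathrm{SL}_2(2), \mathrm{SL}_2(3), \mathrm{SU}_3(2)$ and the like), which simultaneously constrains the rank of $G_0$ and the field size $q$.

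First I would go through the classical cases type by type using the explicit Levi decompositions recorded in \cite[Propositions 4.1.17--4.1.20]{KL}. For $G_0 = \mathrm{L}_n^{\e}(q)$ and a $P_m$-parabolic, the derived group of the Levi is, up to central identification, of the form $\mathrm{SL}_m(q^u) \circ \mathrm{SL}_{n-2m}^{\e}(q)$ with $u \in \{1,2\}$, and a similar statement holds for $P_{m,n-m}$ in the linear case; solubility is only possible when both factors are small, forcing $n \leqs 6$ and $q \in \{2,3\}$ in most cases. The symplectic and orthogonal $P_m$-Levis have a $\mathrm{GL}_m(q)$-factor together with a smaller classical factor, and an analogous analysis gives the remaining entries of (i) together with the classical rows of Table \ref{tab:par}.

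For exceptional $G_0$, I would use the known descriptions of maximal parabolic subgroups and their Levi structures (see \cite{BHR}). The solubility condition immediately eliminates $E_6(q), E_7(q), E_8(q)$, together with $F_4(q)$ and ${}^2E_6(q)$ for all but the smallest $q$, leaving only: $G_2(q)$ with $q\in\{2,3\}$ (Levi of type $A_1(q)$), ${}^3D_4(q)$ with $q\in\{2,3\}$ (Levi of type $A_1(q)$ or $A_1(q^3)$, modulo a torus factor), the Tits group ${}^2F_4(2)'$, and $F_4(2)$ with a class of parabolics fused by the graph-field automorphism. Modulo the $G_2(2)' \cong \mathrm{U}_3(3)$ and ${}^2G_2(3)' \cong \mathrm{L}_2(8)$ isomorphisms in Remark \ref{r:isom}, this yields the list in (ii). As a final check, and to read off the precise structure of $H$, I would cross-reference the outcome with the Li--Zhang classification \cite{LZ}, from which the cases in (iii) and Table \ref{tab:par} can be extracted directly.

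The main obstacle will be the bookkeeping when $G>G_0$: diagonal, field and graph (or graph-field) automorphisms can fuse $G_0$-classes of parabolic subgroups and enlarge their normalisers, as in (ii)(d), where two $F_4(2)$-classes of parabolic subgroups merge under a graph-field automorphism to yield a single soluble maximal subgroup of $F_4(2).2$. At each stage one must verify that the enlarged $H = N_G(H_0)$ is still soluble \emph{and} maximal in $G$, and record its exact structure; this outer-automorphism tracking is the most delicate and tedious part of the argument.
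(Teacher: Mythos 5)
The paper's own proof of this lemma is a one-line citation: the classification of almost simple primitive groups with soluble stabilisers was carried out by Li and Zhang, and the result follows by inspecting their tables (\cite[Tables 16--20]{LZ}). Your proposal aims to re-derive the classification from the structure of Levi complements, which is a genuinely different and more conceptual route, but as written it has a gap.

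The core problem is that your mechanism -- solubility of the Levi forces its quasisimple factors to be small, which constrains $q$ -- only applies to parabolics whose Levi has a non-trivial semisimple part. When $H$ is (the normaliser of) a Borel subgroup, the Levi is a torus and is always soluble, so no constraint on $q$ emerges. This is exactly the source of the infinite families in Table \ref{tab:par}: in the rank-one groups ${\rm U}_3(q)$, ${}^2B_2(q)$ and ${}^2G_2(q)$ the maximal parabolic \emph{is} the Borel, and in the rank-two groups ${\rm L}_3(q)$, ${\rm Sp}_4(q)$ ($p=2$) and $G_2(q)$ ($p=3$), the Borel becomes a maximal subgroup of $G$ precisely when $G$ contains a graph or graph-field automorphism fusing the two $G_0$-classes of maximal parabolics over it. Your exceptional list ("leaving only $G_2(q)$ with $q\in\{2,3\}$, ${}^3D_4(q)$ with $q\in\{2,3\}$, ${}^2F_4(2)'$, and $F_4(2)$") omits ${}^2B_2(q)$ and ${}^2G_2(q)$ entirely and wrongly caps $q$ at $3$ for $G_2$: case (d) of Table \ref{tab:par} runs over all $q=3^f$. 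The same issue means the claimed "analogous analysis" in the classical section cannot produce rows (a)--(c) of Table \ref{tab:par}, which carry no upper bound on $q$. (A smaller slip: $H/H_0 \cong G/G_0$ need not be cyclic, since ${\rm Out}(G_0)$ may have a non-cyclic graph component; what is true, and all you need, is that ${\rm Out}(G_0)$ is soluble.) To repair the argument you must separate the Borel case from the proper-parabolic case, observe the Borel is always soluble, and then determine exactly when it is maximal in $G$ -- the fusion phenomenon you flag at the end, but which your $q$-bounding step silently discards.
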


\begin{table}
\[
\begin{array}{clll} \hline
\mbox{Case} & G_0 & \mbox{Type of $H$} & \mbox{Conditions} \\ \hline
{\rm (a)} & {\rm L}_{3}(q) & P_{1,2} & G \not\leqs \la {\rm PGL}_{3}(q), \phi \ra \\
{\rm (b)} & {\rm U}_{3}(q) & P_1 & \\
{\rm (c)} & {\rm Sp}_{4}(q) & [q^4]{:}C_{q-1}^2 & \mbox{$q=2^f \geqs 4$ and $G \not\leqs \la G_0, \phi \ra$} \\
{\rm (d)} & G_2(q) & [q^6]{:}C_{q-1}^2 & \mbox{$q=3^f \geqs 3$ and  $G \not\leqs \la G_0, \phi \ra$} \\
{\rm (e)} & {}^2B_2(q) & [q^{2}]{:}C_{q-1} & q=2^{2m+1} \geqs 8 \\
{\rm (f)} & {}^2G_2(q) & [q^{3}]{:}C_{q-1} & q=3^{2m+1} \geqs 27 \\ \hline
\end{array}
\]
\caption{Parabolic actions}
\label{tab:par}
\end{table}

\begin{proof}
This follows by inspection of \cite[Tables 16-19]{LZ} for $G_0$ classical and \cite[Table 20]{LZ} for $G_0$ exceptional. 
\end{proof}

\begin{prop}\label{p:par1}
Proposition \ref{p:par} holds in cases (i) and (ii) of Lemma \ref{l:par1}.
\end{prop}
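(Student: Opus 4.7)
The statement falls into finitely many concrete cases, so the plan is to follow the computational recipe laid out in Section \ref{ss:comp}. For each pair $(G,H)$ appearing in case (i) of Lemma \ref{l:par1} (the classical groups $G_0 \in \{{\rm L}_n^{\e}(q), {\rm L}_6(q), {\rm PSp}_6(q), \O_7(q), {\rm P\O}_8^+(q)\}$ with $n \leqs 5$ and $q \in \{2,3\}$) and case (ii) (the small list of specific exceptional groups), I would first construct $G$ as a permutation group via \texttt{AutomorphismGroupSimpleGroup} and then locate $H$ as a maximal subgroup. Since the relevant parabolic subgroups are all soluble with orders easily read off, the routine call \texttt{MaximalSubgroups(G:IsSolvable:=true)} or direct construction via \texttt{ClassicalMaximals}/\texttt{MaximalParabolics} pinpoints the correct $H$ up to conjugacy.

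Next, for each case I would apply the lower bound $b(G,H) \geqs \lceil \log|G|/\log|\O| \rceil$ from Lemma \ref{l:easy}, and then produce a matching upper bound by random search for elements $x_1,\ldots,x_{c-1} \in G$ with $H \cap H^{x_1} \cap \cdots \cap H^{x_{c-1}} = 1$. In the vast majority of cases these two values coincide, which settles $b(G,H)$ immediately. The value of $b$ recorded in Tables \ref{tab:as2} and \ref{tab:as3} is then read off directly. Since only finitely many groups are under consideration in (i) and (ii), the asymptotic clause $\mathcal{P}(G,3) \to 1$ is vacuous here — that part of Proposition \ref{p:par} will be verified in the infinite families of case (iii).

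The only delicate points are the two alleged $b=5$ extremes, namely $G_0 = {\rm L}_4(3)$ with $H$ of type $P_2$ and $G_0 = {\rm U}_5(2)$ with $H$ of type $P_1$, together with any other sporadic pair for which $b$ strictly exceeds $\lceil \log|G|/\log|\O|\rceil$. In these instances the random search gives $b \leqs 5$ (respectively the claimed table value), and to confirm the matching lower bound I would invoke \texttt{CosetAction} to realise $G$ on $\O$ and then walk down a stabiliser chain, enumerating orbit representatives of successive $k$-point stabilisers for $k<4$ and verifying that every $4$-point stabiliser is nontrivial. This is the analogue of the $S_8$ computation discussed after Example \ref{e:special}.

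The main obstacle I anticipate is scalability in the two largest representations: $G_0 = {\rm P\O}_8^+(3)$ acting on the various $P_1, P_3, P_4, P_{1,3,4}$ cosets, and $G_0 = {\rm L}_6(3)$ acting on $P_m$. Here $|\O|$ and the depth of the stabiliser chain may render a naive orbit enumeration impractical. If so, I would fall back on the double coset enumeration technique of Example \ref{e:special}: find a collection $T$ of $(H,H)$-double coset representatives with $|HxH|<|H|^2$ and $\sum_{x \in T}|HxH| > |G|-|H|^2$, which shows $H$ has no regular orbit on $\O$ and hence forces $b(G,H) \geqs 3$; iterating inside a $2$-point stabiliser handles the bound $b \geqs 4$ when needed. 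Together with the random-search upper bounds, this closes every remaining case.
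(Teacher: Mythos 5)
Your approach is essentially the same as the paper's: construct each of the finitely many groups $(G,H)$ from cases (i) and (ii) in {\sc Magma}, pin down the base size by combining the lower bound from Lemma~\ref{l:easy} with a random-search upper bound, and fall back on stabiliser-chain or double-coset enumeration when these do not immediately agree. You are also right that the asymptotic clause $\mathcal{P}(G,3) \to 1$ is moot here since only finitely many groups are in play.

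The one place where the paper deviates from a purely computational treatment is case (ii)(d), $G = F_4(2).2$ with $H = [2^{22}]{:}S_3^2.2$: there the paper simply cites \cite[Theorem 3]{BLS} (where this case appears as $H = P_{2,3}$) to obtain $b(G,H) = 3$, rather than running the {\sc Magma} machinery. Your proposal would attempt to compute this directly; since $\lceil \log|G|/\log|\O|\rceil = 3$ coincides with the claimed value, you would only need a random search to produce two conjugates $H^{x_1}, H^{x_2}$ with $H \cap H^{x_1}\cap H^{x_2}=1$, which is likely feasible given that $F_4(2)$ admits a faithful permutation representation of moderate degree. Still, it is worth noting that the authors chose the citation as a shortcut — an existing theorem renders the computation unnecessary — and your write-up should either acknowledge the same citation or explicitly confirm that the computation terminates for this large group. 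Otherwise your argument matches the paper's and is correct.
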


\begin{proof}
For the case in part (ii)(d), \cite[Theorem 3]{BLS} gives $b(G,H) = 3$ (here $H = P_{2,3}$ in the notation of \cite{BLS}). All of the remaining groups can be handled using {\sc Magma} (see Section \ref{ss:comp}). 
\end{proof}

For the remainder of this section, we may assume $(G,H)$ belongs to one of the infinite families recorded in Table \ref{tab:par}. Notice that in each case, $H = N_G(P)$ where $P$ is a Sylow $p$-subgroup of $G_0$. As before, if $G_0$ is a classical group then we refer the reader to \cite[Section 3]{BG} for information on the conjugacy classes of elements of prime order in ${\rm Aut}(G_0)$.

\begin{lem}\label{l:psl3}
Suppose $G_0 = {\rm L}_{3}(q)$ and $H$ is of type $P_{1,2}$. Then either $b(G,H)=3$, or $G = {\rm L}_{3}(4).D_{12}$ and $b(G,H)=4$. Moreover, $\mathcal{P}(G,3) \to 1$ as $q \to \infty$.
\end{lem}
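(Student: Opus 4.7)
The plan is to follow the three-step template used throughout this section: establish $b(G,H)\geqs 3$ via Lemma \ref{l:easy}, settle the small values of $q$ with \textsc{Magma}, and invoke the probabilistic method of Lemma \ref{l:base} in the generic range. Since $|\O|=(q^2+q+1)(q+1)$ grows like $q^3$ while $|G|\geqs|G_0|=q^3(q^3-1)(q^2-1)/d$ grows like $q^8/d$ with $d=(3,q-1)$, a direct comparison gives $|G|>|\O|^2$ for every $q\geqs 3$, so Lemma \ref{l:easy} forces $b(G,H)\geqs 3$.

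For the small values of $q$, I would work through each $q\leqs Q_0$ (with $Q_0$ chosen so that the $\mathcal Q$-estimate below drops below $1$ for $q>Q_0$). For each such $q$, $G$ is built using the recipe in Section \ref{ss:comp}, $H$ is realised as $N_G(P)$ for a Sylow $p$-subgroup $P$ of $G_0$, and $b(G,H)$ is computed by combining a random search (for an upper bound) with the coset-stabiliser or double-coset enumeration approach (for the matching lower bound, as in Example \ref{e:special}). The outcome to verify is that $b(G,H)=3$ in every case \emph{except} $G={\rm L}_3(4).D_{12}=\Aut({\rm L}_3(4))$, for which one exhibits an explicit base of size $4$ and checks computationally that every triple of flags is fixed by some nontrivial element.

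For the generic case I apply Lemma \ref{l:calc} class by class. The prime-order elements of $H$ fall into four natural families: (a) unipotent elements in the radical $[q^3]$ of $H_0$; (b) semisimple elements of $H_0$, lying in a maximal split torus of order $(q-1)^2/d$; (c) involutory graph and graph-field automorphisms, which appear because the hypothesis $G\not\leqs\la{\rm PGL}_3(q),\phi\ra$ forces $G$ to contain a graph automorphism of $G_0$, and any such automorphism normalising the Borel lies in $H$; and (d) field and graph-field automorphisms of odd prime order when $q=q_0^r$. Standard formulas from \cite[Section 3]{BG} supply $|x^G|$ and $|x^G\cap H|$ for each class, and the expected dominant contribution to $\mathcal Q(G,3)$ comes from the long-root unipotent class, where $|x^G\cap H|$ is of order $q^2$ and $|x^G|$ is of order $q^4$, giving a contribution of order $q^{-2}$; every other class should contribute $O(q^{-2}\log\log q)$ after summing over the relevant orders and subfield divisors. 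This yields $\mathcal Q(G,3)=O(q^{-1}\log\log q)$, and in particular $\mathcal P(G,3)\to 1$ as $q\to\infty$.

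The main obstacle, I expect, is step two: confirming that ${\rm L}_3(4).D_{12}$ really is the unique exception with $b(G,H)=4$. The hypothesis $G\not\leqs\la{\rm PGL}_3(q),\phi\ra$ allows a range of overgroups of $G_0$ for each small $q$ (any subgroup of $\mathrm{Out}(G_0)$ meeting the graph-automorphism coset), and the value $q=4$ is particularly delicate because $\Aut({\rm L}_3(4))$ has the unusually large outer automorphism group $D_{12}$. Verifying $b(G,H)=3$ in the smaller $q=4$ extensions but $b(G,H)=4$ in the full automorphism group will likely require the double-coset enumeration technique of Example \ref{e:special} to establish a watertight lower bound, since an exhaustive orbit analysis via \texttt{CosetAction} on $|\O|=105$ points is expected to be effective but the coset chain at the borderline is tight.
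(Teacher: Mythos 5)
Your overall template matches the paper's: force $b\geqs 3$ from $\log|G|>2\log|\O|$, check small $q$ by computer, and bound $\mathcal{Q}(G,3)$ for large $q$. You also correctly single out $G={\rm L}_3(4).D_{12}$ as the unique exception. However, your asymptotic estimate is wrong in a way that would bite if the plan were carried out literally, and this is the substance of the paper's proof.

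You claim the dominant contribution to $\mathcal{Q}(G,3)$ comes from the long-root unipotent class and is of order $q^{-2}$, with every other class contributing $O(q^{-2}\log\log q)$. This is not the case. The non-regular semisimple classes already contribute $O(q^{-1}\log q)$ (there are $\approx(r-1)$ classes per prime $r\mid q-1$, each of size $\sim q^6$ with $|x^G\cap H|\sim q^2$). More seriously, since $H=N_G(P)$ is a Borel normaliser, it contains field automorphisms of every prime order $r$ dividing $f=\log_p q$, and for $r=2$ the term $e(q,2)$ is of order $q^{-1/2}$; the involutory graph-field term is likewise $O(q^{-1/2})$. So the true bound is $\mathcal{Q}(G,3)=O(q^{-1/2}\log\log q)$, not $O(q^{-1}\log\log q)$. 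Worse, for $q=q_0^r$ with $q_0=2$ and $r$ large this field-automorphism term does not decay monotonically, and the paper notes explicitly that $\varphi>1$ at $q=2^7$. Your estimate would suggest a cutoff $Q_0$ far smaller than $128$, leaving a genuine gap for the intermediate $q$ (e.g.\ $q=2^7$), which the paper resolves by pushing the \textsc{Magma} verification up to $q=2^7$ and only invoking the probabilistic bound for $q>128$.

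A secondary omission: you assert that standard formulas from \cite[Section 3]{BG} give $|x^G\cap H|$. They give $|x^G|$, but $|x^G\cap H|$ requires the permutation character. The paper decomposes $\chi=\chi_1+2\chi_{q(q+1)}+\chi_{q^3}$ from the ${\rm L}_3(q)$ character table in \cite{SF} (correcting a misprint in the Steinberg values on unipotent elements), and computes $|C_\O(x)|$ explicitly for graph automorphisms by counting $1$-spaces with $U<U^\perp$. Any rigorous version of your step three must do something equivalent, and it is precisely here that the different classes acquire their different orders of magnitude. Finally, your concern about needing double-coset enumeration at $q=4$ is misplaced: $|\O|=105$ is small enough that direct orbit analysis via \texttt{CosetAction} is immediate.
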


\begin{proof}
Write $q=p^f$ and set $d=(3,q-1)$. As recorded in Table \ref{tab:par}, the maximality of $H$ implies that $G$ contains graph or graph-field automorphisms of $G_0$. We have 
\[
H_0  = [q^3]{:}[(q-1)^2/d],\;\;  |\O| = (q^2+q+1)(q+1)
\]
and one checks that $\log |G| / \log |\O| >2$ (recall that $q \geqs 3$). The cases with $q \leqs 2^7$ can be checked using {\sc Magma}. (Note that if $q \geqs 5$, then it suffices to show that $b(G,H) \leqs 3$ for $G = {\rm Aut}(G_0)$, which is easily checked by random search, noting that $H = N_G(P)$ for a Sylow $p$-subgroup $P$ of $G_0$.) For the remainder of the proof, we will assume that $q > 2^7$. Our aim is to show that $\mathcal{Q}(G,3)<1$ (and also $\mathcal{Q}(G,3) \to 0$ as $q$ tends to infinity).

First assume $x \in G_0$ is an element of prime order $r$. Let $\chi$ be the corresponding permutation character of $G_0$, so $\chi(x) = |C_{\O}(x)|$. The character table of $G_0$ is presented in \cite[Table 2]{SF} and we observe that 
\[
\chi = \chi_1 + 2\chi_{q(q+1)} + \chi_{q^3}
\]
as a sum of unipotent characters (in the notation of \cite[Table 2]{SF}). Here $\chi_1$ and $\chi_{q^3}$ are the trivial and Steinberg characters of $G_0$, respectively.

Suppose $x$ is unipotent and let $J_i$ denote a standard unipotent Jordan block of size $i$. If $x$ has Jordan form $[J_2,J_1]$ on the natural module, then we read off $\chi(x) = 2q+1$ (note that there is an error in \cite[Table 2]{SF}: the Steinberg character vanishes on all nontrivial unipotent elements, so $\chi_{q^3}(x) = 0$ and not $q$ as stated in the table). Similarly, 
$\chi(x) = 1$ if $x = [J_3]$. For $x = [J_2,J_1]$ we have $|x^{G_0}| = (q+1)(q^3-1)$ and we deduce that $|x^{G_0} \cap H_0| = 2q^2-q-1$. On the other hand, if $x$ is regular then $|x^{G_0}| = q(q^2-1)(q^3-1)/d$ and we get $|x^{G_0} \cap H_0| = q(q-1)^2/d$. Therefore, the combined contribution to $\mathcal{Q}(G,3)$ from unipotent elements is 
\[
\a = \frac{(q^2-q-1)^3}{(q+1)^2(q^3-1)^2} + \frac{q^3(q-1)^6}{q^2(q^2-1)^2(q^3-1)^2} < q^{-2}.
\]

Next assume $x \in G_0$ is semisimple and note that we may assume $r$ divides $q-1$ (otherwise $x^G \cap H$ is empty). If $r=2$ then $|x^{G_0}| = q^2(q^2+q+1)$ and $\chi(x) = 3(q+1)$, which gives $|x^{G_0} \cap H_0| = 3q^2$. Therefore, the contribution from semisimple involutions is equal to
\[
\b_1 = \frac{(3q^2)^3}{q^4(q^2+q+1)^2} = \frac{27q^2}{(q^2+q+1)^2}.
\]

If $x$ is regular then $|x^{G_0}| = q^3(q+1)(q^2+q+1)$ and $\chi(x) = 6$, so $|x^{G_0} \cap H_0| = 6q^3$. Let $n(r)$ be the number of $G_0$-classes of regular semisimple elements of order $r$. Then $n(3) = 1$ and $n(r) = \frac{1}{6}(r-1)(r-2)$ if $r \geqs 5$. Therefore, the combined contribution to $\mathcal{Q}(G,3)$ from these elements is equal to
\[
\b_2 = \left(\delta + \frac{1}{6}\sum_{r \in \pi} (r-1)(r-2) \right) \cdot \frac{(6q^3)^3}{(q^3(q+1)(q^2+q+1))^2}, 
\]
where $\pi$ is the set of primes $r \geqs 5$ dividing $q-1$ and we set $\delta = 1$ if $d = 3$, otherwise $\delta = 0$. Similarly, if $x \in G_0$ is non-regular then $|x^{G_0}| = q^2(q^2+q+1)$ and $\chi(x) = 3(q+1)$, which gives $|x^{G_0} \cap H_0| = 3q^2$. Since there are $r-1$ distinct $G_0$-classes of such elements if $r \geqs 5$ (and none if $r=3$), the contribution here is equal to 
\[
\b_3 = \left(\delta+\sum_{r \in \pi}(r-1) \right)\cdot \frac{27q^2}{(q^2+q+1)^2},
\]
where $\delta$ and $\pi$ are defined as above. Therefore, the combined contribution from all semisimple elements in $G_0$ is equal to $\b_0 = (1-\delta_{2,p})\b_1+\b_2+\b_3$. 

For $2^7< q< 1000$, we calculate that $\b_0 < \frac{1}{7}$. Now assume $q > 1000$. If $q-1$ is a prime, then $q$ is even, $\delta = 0$, $\pi = \{q-1\}$ and it is routine to check that $\b_0<70q^{-1}$.
Now assume $q-1$ is composite. Since $|\pi| < \log q$ and $r \leqs \frac{1}{2}(q-1)$, we deduce that 
\[
\delta + \frac{1}{6}\sum_{r \in \pi} (r-1)(r-2) < 1 + \frac{1}{24}(q-3)(q-5)\log q
\]
and
\[
\delta+\sum_{r \in \pi}(r-1) < 1+\frac{1}{2}(q-3)\log q.
\]
These estimates yield upper bounds on $\b_2$ and $\b_3$ and one checks that $\b_0< 250q^{-1}$.

To complete the analysis of semisimple elements, it remains to consider the contribution from elements of order $3$ in ${\rm PGL}_{3}(q) \setminus G_0$, so let us assume $d=3$. There are four $G_0$-classes of such elements; two of the classes are represented by elements that are the images of non-regular elements of order $3$ in ${\rm GL}_{3}(q)$, while the latter two are the images of elements of order $9$ that do not fix any $1$-spaces over $\mathbb{F}_q$ (in particular, $x^G \cap H$ is empty for these elements). If $x$ is the image of a non-regular element of order $3$ then $|x^{G_0}| = q^2(q^2+q+1)$ and we calculate that $|x^{G_0} \cap H| = 3q^2$ (this can be computed directly and it also follows from the fact that $\chi(y) = 3(q+1)$ for all non-regular semisimple elements $y \in G_0$), so the contribution from these elements is equal to $2\b_1$. 

Therefore, the entire contribution to $\mathcal{Q}(G,3)$ from semisimple elements is equal to 
\[
\b = (1 - \delta_{2,p}+2\delta_{3,d})\b_1+\b_2+\b_3
\]
and we conclude that $\b<\frac{1}{7}$ if $2^7<q < 1000$ and $\b<250q^{-1}$ if $q > 1000$.

Next assume $x \in G$ is a field automorphism of prime order $r$, so $q=q_0^r$. Set $\tilde{G} = {\rm PGL}_{3}(q)$ and $\tilde{H} = [q^3]{:}C_{q-1}^2 = N_{\tilde{G}}(P)$. Then
\[
|x^{\tilde{G}}| = \frac{q^3(q^2-1)(q^3-1)}{q^{3/r}(q^{2/r}-1)(q^{3/r}-1)} = f(q,r)
\]
and as noted in the proof of \cite[Lemma 6.1]{LLS2}, we have
\[
|x^{\tilde{G}} \cap \tilde{H}x| = \frac{q^3(q-1)^2}{q^{3/r}(q^{1/r}-1)^2} = g(q,r).
\]
Therefore, the contribution to $\mathcal{Q}(G,3)$ from field automorphisms is
\[
\varphi=\sum_{r \in \pi} (r-1) \cdot g(q,r)^3f(q,r)^{-2},
\]
where $\pi$ is the set of prime divisors of $\log_pq=f$. One checks that $\varphi<\frac{1}{2}$ if $2^7<q<10000$, so let us assume $q > 10000$. (It is worth noting here that $\varphi>1$ if $q=2^7$, which explains why we used {\sc Magma} to handle this case.) Set $e(q,r) = (r-1) \cdot g(q,r)^3f(q,r)^{-2}$.

If $q_0 \in \{2,3\}$ then $\varphi = e(q_0^r,r) < 3q^{-1/2}$. Now assume $q_0 \geqs 4$ and observe that 
\[
|x^{\tilde{G}} \cap \tilde{H}x| < 2q^{5\left(1-\frac{1}{r}\right)},\;\; |x^{\tilde{G}}|>q^{8\left(1-\frac{1}{r}\right)}
\]
and thus 
\[
e(q,r) < (r-1)\cdot 8q^{-\left(1-\frac{1}{r}\right)} = 8(r-1)q_0^{-(r-1)}.
\]
For $r \geqs 3$, this implies that $e(q,r)<8q^{-1/2}$ and direct calculation gives $e(q,2) < 2q^{-1/2}$. Since $|\pi| < \log\log q$, we conclude that 
\[
\varphi < 8q^{-\frac{1}{2}}\log\log q
\]
for $q>10000$.

Next suppose $x \in G$ is an involutory graph-field automorphism. Here $q=q_0^2$,  
\[
|x^{\tilde{G}}\cap \tilde{H}x| = \frac{q^3(q-1)^2}{q^{3/2}(q-1)} = q^{3/2}(q-1)
\]
(since a Borel subgroup of $C_{\tilde{G}}(x) = {\rm PGU}_{3}(q^{1/2})$ has order $q^{3/2}(q-1)$) and
\[
|x^{\tilde{G}}| = \frac{q^3(q^2-1)(q^3-1)}{q^{3/2}(q-1)(q^{3/2}+1)} = q^{3/2}(q+1)(q^{3/2}-1).
\]
Therefore, the contribution from these elements is equal to 
\[
\frac{|x^{\tilde{G}}\cap \tilde{H}x|^3}{|x^{\tilde{G}}|^2} = \frac{q^{3/2}(q-1)^3}{(q+1)^2(q^{3/2}-1)^2} < q^{-\frac{1}{2}}.
\]

Finally, let us assume $x$ is an involutory graph automorphism of $G_0$. Without loss of generality, replacing $x$ by a conjugate if necessary, we may assume that $x$ is the inverse-transpose map. We claim that $|C_{\O}(x)| = q+1$, which implies that $|x^{\tilde{G}} \cap \tilde{H}| = q^2(q-1)$. Since $|x^{\tilde{G}}| = q^2(q^3-1)$, it follows that the contribution from graph automorphisms is at most 
\[
\frac{(q^2(q-1))^3}{(q^2(q^3-1))^2} = \frac{q^2(q-1)}{(q^2+q+1)^2} < q^{-1}.
\] 

To establish the claim, it is helpful to identify $\O$ with the set of flags $0< U < W < V$ of the natural module $V$ for $G_0$. Let $\{e_1,e_2,e_3\}$ be a basis for $V$. Now $x$ maps the $1$-space $U = \la u \ra$ to the $2$-space $U^{\perp} = \{ v \in V \,:\, u^{T}v = 0 \mbox{ for all } u \in U\}$. Therefore, $x$ fixes a flag $0< U < W < V$ if and only if $U < U^{\perp}$, whence $|C_{\O}(x)|$ is the number of $1$-spaces $\la a_1e_1+a_2e_2+a_3e_3 \ra$ with $a_1^2+a_2^2+a_3^2 = 0$. 

If $q$ is even, then $a_1^2+a_2^2+a_3^2 = 0$ if and only if $a_3 = a_1+a_2$, so there are $q^2-1$ choices for $a_1e_1+a_2e_2+a_3e_3$ and thus $q+1$ distinct $1$-spaces with the desired property. For $q$ odd, we see that 
$|C_{\O}(x)|$ is the number of totally isotropic $1$-spaces in a $3$-dimensional orthogonal space. Therefore, $|C_{\O}(x)| = |{\rm SO}_{3}(q):L|$ where $L$ is a Borel subgroup of ${\rm SO}_{3}(q)$, which once again gives  $|C_{\O}(x)| = q+1$ as claimed. 

We conclude that if $2^7 < q < 10000$, then 
\[
\mathcal{Q}(G,3) < \frac{1}{2}+\frac{1}{7} + q^{-\frac{1}{2}} + q^{-1} + q^{-2} < 1
\]
and thus $b(G,H) = 3$. Similarly, if $q>10000$ then the above estimates imply that 
\[
\mathcal{Q}(G,3) < (1+8\log\log q)q^{-\frac{1}{2}} + 251q^{-1} + q^{-2}
\]
and the result follows.
\end{proof}

\begin{lem}\label{l:psu3}
If $G_0 = {\rm U}_{3}(q)$ and $H$ is of type $P_{1}$, then $b(G,H)=3$ and $\mathcal{P}(G,3) \to 1$ as $q \to \infty$.
\end{lem}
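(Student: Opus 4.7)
The plan is to follow the same blueprint as the preceding Lemma \ref{l:psl3}: combine Lemma \ref{l:easy} with a probabilistic estimate of $\mathcal{Q}(G,3)$ via fixed point ratios. Here $H_0 = [q^3]{:}C_{(q^2-1)/d}$ with $d=(3,q+1)$, and $|\O| = q^3+1$ is the set of singular $1$-spaces of the natural unitary module, on which $G_0$ acts $2$-transitively. Since $|G_0| = q^3(q^2-1)(q^3+1)/d$, a direct calculation gives $\log|G|/\log|\O| > 2$, so Lemma \ref{l:easy} yields $b(G,H) \geqs 3$. It therefore suffices to establish $\mathcal{Q}(G,3) < 1$ for every $q$ and $\mathcal{Q}(G,3) \to 0$ as $q \to \infty$, which together give $b(G,H) = 3$ and the asymptotic statement.

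For small $q$ (say $q \leqs 2^7$) I would compute directly in \textsc{Magma} as in Section \ref{ss:comp}: since $H = N_G(P)$ for $P$ a Sylow $p$-subgroup of $G_0$, it is enough to produce three $G$-conjugates of $P$ with trivial simultaneous normaliser in ${\rm Aut}(G_0)$. For large $q$ I would partition the prime-order elements of $H$ as follows and estimate each contribution to $\mathcal{Q}(G,3)$. First, the \emph{unipotent elements} split into the two $G_0$-classes $[J_2,J_1]$ and $[J_3]$, for which $2$-transitivity forces $\chi = 1 + {\rm St}$ and hence $\chi(x) = 1$ on each nontrivial unipotent; this gives $|x^{G_0}\cap H_0| = |x^{G_0}|/(q^3+1)$ and a combined contribution of size $O(q^{-2})$. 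Second, the \emph{semisimple elements} of $G_0$ lie in the cyclic Levi and have order dividing $(q^2-1)/d$; I would treat involutions ($q$ odd) and odd-order elements separately, using the Steinberg value $\pm|C_{G_0}(x)|_p$ to compute $|x^{G_0}\cap H_0|$ exactly. Third, I would account for the \emph{diagonal outer automorphisms} of order $3$ when $d=3$, handled by the same method. Finally, for a \emph{field automorphism} $x$ of prime order $r$, write $q = q_0^r$ and use $C_{\widetilde{G}}(x) = {\rm PGU}_3(q_0)$ (odd $r$) or ${\rm PGL}_3(q_0)$ ($r=2$, the graph-field case), combined with \cite[Lemma 6.1]{LLS2} to get exact formulas for $|x^{\widetilde{G}} \cap \widetilde{H}x|$.

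The main obstacle is the bookkeeping for the semisimple classes: although all semisimple elements of $H_0$ lie in a single cyclic Levi, they partition into many $G_0$-classes with different centraliser orders (coming from the Coxeter torus of order $(q^2-1)/d$ or its subtori associated with $q-1$ and $q+1$), and one must use the crude bounds $|\pi| < \log q$ for the prime divisors of $q^2-1$ and $|\pi'| < \log\log q$ for those of $f$ to control the number of classes that arise. Summing the four contributions as in Lemma \ref{l:psl3} will yield $\mathcal{Q}(G,3) = O(q^{-1/2}\log\log q)$, which is less than $1$ for all $q$ beyond the computational range and tends to $0$ as $q \to \infty$. This simultaneously delivers $b(G,H) = 3$ and $\mathcal{P}(G,3) \to 1$, completing the proof.
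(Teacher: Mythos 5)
Your overall blueprint matches the paper's: the lower bound from Lemma \ref{l:easy}, a \textsc{Magma} check for small $q$, the identification $\chi = 1 + \mathrm{St}$ coming from $2$-transitivity, and estimation of the unipotent, semisimple, diagonal and field-automorphism contributions to $\mathcal{Q}(G,3)$. Those parts are sound and are essentially what the paper does.

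However, your treatment of the order-two outer automorphism is wrong, and this is a genuine gap rather than a presentational slip. For $G_0 = \mathrm{U}_3(q)$, the outer automorphism group is $C_d \times C_{2f}$ (with $q = p^f$), and a field automorphism of \emph{prime} order $r$ from the $C_{2f}$ factor exists only for odd $r \mid f$, with $q = q_0^r$ and $C_{\widetilde{G}}(x) = \mathrm{PGU}_3(q_0)$. The unique involution in $\langle \phi \rangle$ is the graph automorphism; it exists for \emph{every} $q$ (not only for $q = q_0^2$) and its centraliser in $G_0$ is of type $\mathrm{SO}_3(q) \cong \mathrm{PGL}_2(q)$, not $\mathrm{PGL}_3(q_0)$. (You appear to be carrying over the $\mathrm{L}_3$ picture, where the graph-field automorphism does require $q$ square and centralises $\mathrm{PGU}_3(q_0)$ — the unitary analogue is different.) This misidentification would give the wrong values for both $|x^{\widetilde{G}}|$ and $|x^{\widetilde{G}} \cap \widetilde{H}x|$, and — more seriously — since the graph automorphism exists for all $q$, your scheme would simply omit its contribution whenever $q$ is not a square. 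The paper computes this contribution directly, showing $|C_\Omega(x)| = q+1$ by an explicit count of totally isotropic $1$-spaces fixed by the Frobenius map on coordinates, and obtains a term of order $q^{-1}$; a contribution of this size cannot be dropped from the estimate. You need to replace your ``$r=2$ graph-field'' clause with a separate, correct analysis of the graph automorphism valid for all $q$ and all almost simple $G$ containing it.
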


\begin{proof}
This is very similar to the proof of the previous lemma. Write $q=p^f$ and $d=(3,q+1)$. Note that $q \geqs 3$ and
\[
H_0 = [q^3]{:}C_{(q^2-1)/d},\;\; |\O| = q^3+1.
\]
We have $\log |G| /\log |\O| > 2$, so $b(G,H) \geqs 3$. The cases with $q \leqs 8$ can be checked using {\sc Magma}, so for the remainder of the proof we will assume that $q>8$.

Let $\chi$ be the corresponding permutation character of $G_0$. The character table of $G_0$ is given in \cite[Table 2]{SF} and we observe that
\[
\chi = \chi_1 + \chi_{q^3}
\]
is the sum of the trivial and Steinberg characters of $G_0$. Let $x \in G_0$ be an element of prime order $r$.

If $x$ is unipotent then $\chi(x) = 1+0$ (as noted in the proof of the previous lemma, there is a misprint in \cite[Table 2]{SF}), so $|x^{G_0}\cap H_0| = q-1$ if $x = [J_2,J_1]$ and $|x^{G_0} \cap H_0| = q(q^2-1)/d$ if $x = [J_3]$. It follows that the contribution to $\mathcal{Q}(G,3)$ from unipotent elements is 
\[
\frac{(q-1)^3}{(q-1)^2(q^3+1)^2} + \frac{q^3(q^2-1)^3}{q^2(q^2-1)^2(q^3+1)^2} < q^{-3}.
\]

Next suppose $x$ is semisimple and note that we may assume $r$ divides $q^2-1$. If $r=2$ then $|x^{G_0}| = q^2(q^2-q+1)$ and $\chi(x) = q+1$, which gives $|x^{G_0} \cap H_0| = q^2$. Therefore, the contribution from semisimple involutions is equal to
\[
\b_1 = \frac{q^6}{q^4(q^2-q+1)^2} = \frac{q^2}{(q^2-q+1)^2}.
\]
Now assume $r \geqs 3$. Let $n(r)$ be the number of $G_0$-classes of regular semisimple elements of order $r$. If $r$ divides $q-1$ then $x$ is regular, $|x^{G_0}| = q^3(q^3+1)$, $n(r) = \frac{1}{2}(r-1)$ and $\chi(x) = 2$, which gives $|x^{G_0} \cap H_0| = 2q^3$. Therefore, 
\[
\b_2 = \frac{1}{2}\sum_{r \in \pi} (r-1) \cdot \frac{8q^3}{(q^3+1)^2} <  \frac{4(q-2)q^3\log q}{(q^3+1)^2} =\b_2'
\]
is the contribution from these elements, 
where $\pi$ is the set of primes $r \geqs 3$ dividing $q-1$. 

Now assume $r$ divides $q+1$. For now, let us also assume that $r \geqs 5$. If $x$ is regular, then $\chi(x) = 0$ so we may assume $x$ is non-regular. Then $|x^{G_0}| = q^2(q^2-q+1)$, $n(r) = r-1$ and $\chi(x) = q+1$, so $|x^{G_0} \cap H_0| = q^2$ and the contribution from these elements is equal to
\[
\b_3 = \sum_{r \in \pi'} (r-1) \cdot \frac{q^2}{(q^2-q+1)^2} < \frac{q^2\log q}{(q^2-q+1)^2} = \b_3',
\]
where $\pi'$ is the set of primes $r \geqs 5$ dividing $q+1$.

To complete the analysis of semisimple elements, let us assume $r=d=3$. Suppose $x \in G_0$ and observe that $|H_0|$ is divisible by $3$ if and only if $q \equiv -1 \imod{9}$. So let us assume $q \equiv -1 \imod{9}$. If $x$ is regular, then $\chi(x)=0$. There are also two non-regular classes of elements $x \in G_0$ of order $3$ with $|x^{G_0}| = q^2(q^2-q+1)$ and $\chi(x) = q+1$ (in the notation of \cite[Table 2]{SF}, these elements are of type $C_4^{(k)}$). Here we get $|x^{G_0} \cap H_0| = q^2$. In addition, there are two classes of elements of order $3$ in ${\rm PGU}_{3}(q) \setminus G_0$, but none of them fix a $1$-dimensional subspace of the natural module for $G_0$ (indeed, on lifting to ${\rm GU}_{3}(q)$, none of these elements have an eigenvalue in $\mathbb{F}_{q^2}$). It follows that the total contribution to $\mathcal{Q}(G,3)$ from elements of order $3$ when $d=3$ is at most  
\[
\b_4 = \frac{2q^2}{(q^2-q+1)^2}.
\]

We conclude that the combined contribution from semisimple elements is less than
\[
\b_1+\b_2'+\b_3'+\b_4 = \frac{(3+\log q)q^2}{(q^2-q+1)^2} +  \frac{4(q-2)q^3\log q}{(q^3+1)^2} < 2q^{-1}
\]
for all $q > 8$.

Next let us assume $x$ is a field automorphism of prime order $r$, so $q=q_0^r$ and $r$ is odd. Set $\tilde{G} = {\rm PGU}_{3}(q)$ and $\tilde{H} = [q^3]{:}C_{q^2-1} = N_{\tilde{G}}(P)$, where $P$ is a Sylow $p$-subgroup of $G_0$. Then
\[
|x^{\tilde{G}}| = \frac{q^3(q^2-1)(q^3+1)}{q^{3/r}(q^{2/r}-1)(q^{3/r}+1)} = f(q,r) > q^{8\left(1-\frac{1}{r}\right)} 
\]
and
\[
|x^{\tilde{G}} \cap \tilde{H}x| = \frac{q^3(q^2-1)}{q^{3/r}(q^{2/r}-1)} = g(q,r) < 2q^{5\left(1-\frac{1}{r}\right)},
\]
so the total contribution to $\mathcal{Q}(G,3)$ from field automorphisms is
\[
\varphi=\sum_{r \in \pi} (r-1) \cdot g(q,r)^3f(q,r)^{-2},
\]
where $\pi$ is the set of odd prime divisors of $\log_pq=f$. One checks that $\varphi<\frac{1}{2}$ if $8<q<10000$, so let us assume $q > 10000$. Set $e(q,r) = (r-1) \cdot g(q,r)^3f(q,r)^{-2}$, so  
\[
e(q,r) < (r-1)\cdot 8q^{-\left(1-\frac{1}{r}\right)} = 8(r-1)q_0^{-(r-1)} < 8q^{-\frac{1}{2}}
\]
and we conclude that 
\[
\varphi < 8q^{-\frac{1}{2}}\log\log q
\]
for $q>10000$. 
 
Finally, let us assume $x \in G$ is an involutory graph automorphism. Fix a standard unitary basis $\{e_1,v, f_1\}$ for the natural module $V$, where $e_1$ and $f_1$ are isotropic, $(e_1,f_1) = (v,v)=1$ and $(e_1,v) = (f_1,v) = 0$ with respect to the defining unitary form $(\, , \,)$ on $V$. It will be convenient to identify $\O$ with the set of totally isotropic $1$-dimensional subspaces of $V$. Without loss of generality, we may assume that $x$ corresponds to the involutory automorphism of $\mathbb{F}_{q^2}$, so $x$ sends the subspace $\la ae_1+bv+cf_1 \ra$ of $V$ to $\la a^qe_1+b^qv+c^qf_1\ra$. The $1$-space $\la ae_1+bv+cf_1 \ra$ is totally isotropic if and only if $ac^q+b^{q+1}+ca^q = 0$, and it is fixed by $x$ if and only if $a,b,c \in \mathbb{F}_q$. Therefore, $|C_{\O}(x)|$ is equal to the number of $1$-spaces $\la ae_1+bv+cf_1 \ra$ with $a,b,c \in \mathbb{F}_q$ and $2ac+b^2 = 0$. 

If $q$ is even then $b=0$ and there are $(q^2-1)/(q-1) = q+1$ choices for $(a,c)$, whence $|C_{\O}(x)|=q+1$. Now assume $q$ is odd. If $b=0$ then either $a$ or $c$ is $0$, so $\la e_1 \ra$ and $\la f_1 \ra$ are the only options. If $b \ne 0$, then we may assume $b=1$ by scaling, so $ac = -\frac{1}{2}$ and there are $q-1$ possibilities for $(a,c)$. So once again we get $|C_{\O}(x)| = 2+(q-1) = q+1$. Now $|x^{\tilde{G}}| = q^2(q^3+1)$ and it follows that $|x^{\tilde{G}} \cap H| =  q^2(q+1)$. Therefore, the contribution to $\mathcal{Q}(G,3)$ from graph automorphisms is equal to 
\[
\frac{q^2(q+1)}{(q^2-q+1)^2}, 
\]
which is less than $2q^{-1}$ for $q>8$.

To conclude, we observe that the above estimates imply that 
\[
\mathcal{Q}(G,3) < \frac{1}{2}+4q^{-1}+q^{-3} < 1
\]
if $8<q<10000$ and 
\[
\mathcal{Q}(G,3) < 8q^{-\frac{1}{2}}\log\log q+ 4q^{-1}+q^{-3}
\]
if $q>10000$. The result follows.
\end{proof}

\begin{lem}\label{l:sp4}
If $G_0 = {\rm Sp}_{4}(q)$ and $H$ is of type $[q^4]{:}C_{q-1}^2$, then $b(G,H) = 3$ and $\mathcal{P}(G,3) \to 1$ as $q \to \infty$.
\end{lem}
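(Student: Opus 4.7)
The strategy mirrors the treatment of Lemmas \ref{l:psl3} and \ref{l:psu3}. Since $q = 2^f \geq 4$ and $G \not\leq \langle G_0, \phi \rangle$, the group $G$ contains a graph or graph-field automorphism of $G_0$ normalising the Borel subgroup $H_0 = [q^4]{:}C_{q-1}^2$, which is precisely $N_{G_0}(P)$ for a Sylow $2$-subgroup $P$ of $G_0$. One computes $|\Omega| = (q+1)^2(q^2+1)$ and checks that $\log|G|/\log|\Omega| > 2$, so Lemma \ref{l:easy} yields $b(G,H) \geq 3$. For the upper bound, I would appeal to Lemma \ref{l:base} and show that $\mathcal{Q}(G,3) < 1$ (and in fact $\mathcal{Q}(G,3) \to 0$) by bounding the contributions to the sum \eqref{e:Q} coming from the $G$-classes of elements of prime order meeting $H$. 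A handful of small cases (say $q \leq 2^6$ or so) will need to be handled directly with \textsc{Magma}, since the sums will not give a useful estimate there; this is straightforward using the methods of Section \ref{ss:comp}, producing three random cosets and checking triviality of the joint stabiliser.

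For the asymptotic estimate, I would partition the elements of prime order in $H$ into unipotent elements, semisimple elements, field automorphisms, and graph/graph-field automorphisms, and bound each contribution separately. For the unipotent contribution, I would use the classification of involution classes in $\mathrm{Sp}_4(q)$ with $q$ even (there are root-type, short-root-type, and regular classes, possibly fused in $\mathrm{Aut}(G_0)$ by a graph automorphism); in each case $|x^{G_0}\cap H_0|$ can be computed directly, while $|x^{G_0}|$ is bounded below by a suitable power of $q$, so the contribution is $O(q^{-c})$ for some $c>0$. For semisimple $x$ of odd prime order $r$ we may assume $r \mid (q-1)(q+1)(q^2+1)$, but $x^G \cap H$ is empty unless $r \mid q-1$; since $x \in H_0$ forces $x$ to lie in the maximal torus $C_{q-1}^2$, the number of $G_0$-classes of such $x$ is at most a polynomial in the prime divisors of $q-1$, and the usual centraliser estimates give a contribution bounded by $O(q^{-1}\log q)$ after summing.

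For field automorphisms of prime order $r$, I would use the standard formula
\[
|x^{\tilde{G}}\cap \tilde{H}x| = \frac{|\tilde{H}|}{|C_{\tilde H}(x)|} = \frac{q^4(q-1)^2}{q^{4/r}(q^{1/r}-1)^2},\qquad |x^{\tilde{G}}| = \frac{|{\rm Sp}_4(q)|}{|{\rm Sp}_4(q^{1/r})|},
\]
where $\tilde{G} = G_0\langle \phi\rangle$ and $\tilde{H} = N_{\tilde G}(P)$, exactly as in the proof of Lemma \ref{l:psl3}. The total contribution is bounded by a sum $\sum_{r\mid f}(r-1)g(q,r)^3/f(q,r)^2$ which can be shown to be $O(q^{-1/2}\log\log q)$. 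For the graph-field automorphism $\rho$ of order $2$ (which exists since $p=2$), $C_{G_0}(\rho) = {}^2B_2(q)$ (when $f$ is odd) or is of twisted type, and $C_H(\rho)$ is a Borel subgroup of $C_{G_0}(\rho)$; the resulting bound $|x^{G_0}\cap H_0|^3/|x^{G_0}|^2 = O(q^{-1/2})$ is comparable to the analogous estimate in Lemma \ref{l:psl3}. Finally, for a graph automorphism $\tau$ (when $f$ is even, so that $\tau \in \mathrm{Aut}(G_0)$ has order $2$ rather than $2f$), I would compute $|C_\Omega(\tau)|$ by counting fixed flags under the long/short root interchange and deduce a contribution of $O(q^{-1})$.

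Summing, $\mathcal{Q}(G,3) = O(q^{-1/2}\log\log q) \to 0$ as $q \to \infty$, which gives both $b(G,H) \leq 3$ (for $q$ above the threshold) and $\mathcal{P}(G,3) \to 1$. The main obstacle I foresee is the bookkeeping for the unipotent and twisted-automorphism contributions: because $p=2$, the classes of involutions in $G_0$ (and of outer involutions in $\mathrm{Aut}(G_0)$) are richer than in the $\mathrm{PSL}_3$ analysis above, and the presence of both a graph automorphism and a graph-field automorphism (depending on the parity of $f$) forces separate case analysis. Once those contributions are pinned down, the remaining work is essentially the same polynomial manipulation as in Lemmas \ref{l:psl3} and \ref{l:psu3}.
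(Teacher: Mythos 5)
Your overall strategy matches the paper's: use Magma for small $q$, then show $\mathcal{Q}(G,3) \to 0$ by partitioning the prime-order elements of $H$ into unipotent, semisimple, field and graph classes, just as in Lemmas \ref{l:psl3} and \ref{l:psu3}. The lower bound $b \geqs 3$ and the formulas you quote for field automorphisms are all consistent with the paper's argument.

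However, you have a genuine misunderstanding of ${\rm Out}({\rm Sp}_4(2^f))$ that needs to be corrected before the case analysis would go through. The outer automorphism group is cyclic of order $2f$, generated by the exceptional graph automorphism $\tau$ with $\tau^2 = \phi$. The unique involution in this cyclic group is $\ddot{\tau}^f$, and it lies outside $\langle \ddot{\phi}\rangle = \langle\ddot{\tau}^2\rangle$ precisely when $f$ is \emph{odd}. So an involutory element of ${\rm Aut}(G_0)$ outside $\langle G_0, \phi\rangle$ exists \emph{only} when $f$ is odd, and in that case $C_{G_0}(x) = {}^2B_2(q)$. Your proposal inverts this: you claim an involutory ``graph-field automorphism'' exists for all $f$ with centraliser ${}^2B_2(q)$ or ``twisted type'', and separately a ``graph automorphism $\tau$ of order $2$'' when $f$ is even. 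Neither is correct — when $f$ is even there is no involutory outer element to account for at all, and the dichotomy you describe between ``graph'' and ``graph-field'' automorphisms does not exist for ${\rm Sp}_4(q)$ in characteristic $2$ (there is a single exceptional outer automorphism whose odd powers generate the coset). The paper handles exactly one case: $f$ odd, $C_{G_0}(x) = {}^2B_2(q)$, $C_{H_0}(x)$ a Borel of ${}^2B_2(q)$, contribution $q^2(q-1)(q+1)^{-4} < q^{-1}$. Your proposed flag-counting step for a nonexistent $f$-even graph automorphism would come up empty, so while this error would not derail the computation, it reflects a structural confusion you should iron out.

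One smaller point of divergence: for the unipotent and semisimple contributions, the paper does not compute $|x^{G_0}\cap H_0|$ ``directly'' in the naive sense. It writes the permutation character as a Deligne--Lusztig character $\chi = R^{\bar G}_{\bar T}(1_{\bar T})$, decomposes it into unipotent characters via \cite[Table 2.8]{GM}, and reads off $\chi(x)$ from the Green functions and from \cite[Lemma 2.2.23]{GM}, then recovers $|x^{G_0}\cap H_0| = \chi(x)|x^{G_0}|/|\O|$. This is appreciably cleaner than trying to count fixed points or intersections by hand, and it is exactly what makes the bookkeeping you flag as ``the main obstacle'' tractable. If you intend to imitate Lemma \ref{l:psl3}, that character-theoretic route is the one to follow.
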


\begin{proof}
Here $q = 2^f \geqs 4$ and the maximality of $H$ implies that $G$ contains graph automorphisms. We have 
\[
H_0 = [q^4]{:}C_{q-1}^2, \;\; |\O| = (q+1)^2(q^2+1),
\] 
so $\log |G| / \log |\O|>2$ and thus $b(G,H) \geqs 3$. For $q \leqs 32$, it is easy to check that $b(G,H) \leqs 3$ using {\sc Magma} \cite{magma}. For the remainder, we may assume that $q \geqs 64$. 

Write $G_0 = \bar{G}_{\s} = {\rm Sp}_{4}(q)$, where $\bar{G} = {\rm Sp}_{4}(k)$, $k$ is the algebraic closure of $\mathbb{F}_2$ and $\s$ is a Steinberg endomorphism of $\bar{G}$. Set $H_0 = H \cap G_0$. Then $H_0 = \bar{H}_{\s}$, where $\bar{H}$ is a $\s$-stable Borel subgroup of $\bar{G}$, and we fix a $\s$-stable maximal torus $\bar{T}$ of $\bar{G}$ contained in $\bar{H}$ such that $\bar{T}_{\s} = C_{q-1}^2$. Let $\chi$ be the permutation character corresponding  to the action of $G_0$ on $\O$. Since $H_0$ is a Borel subgroup of $G_0$, it follows that $\chi = R^{\bar{G}}_{\bar{T}}(1_{\bar{T}})$ is the Deligne-Lusztig character of $G_0$ corresponding to the trivial conjugacy class in the Weyl group $W = D_8$ of $G_0$. By adopting the notation in \cite[Table 2.8]{GM}, we can express
\begin{equation}\label{e:decomp}
\chi = \theta_0 + 2\theta_9 + \theta_{11} + \theta_{12} + \theta_{13}
\end{equation}
as a sum of unipotent characters of $G_0$ (in this notation, $\theta_0$ and $\theta_{13}$ are the trivial and Steinberg characters of $G_0$). Let $x \in G_0$ be an element of prime order $r$.

First assume $r=2$, so $x$ is of type $b_1$, $a_2$ or $c_2$ with respect to  the notation in \cite{AS}. Since $G$ contains graph automorphisms, we note that $b_1$ and $a_2$ are $G$-conjugate. The values of the unipotent characters in \eqref{e:decomp} at unipotent elements are recorded in \cite[Table 2.10]{GM}. If $x$ is of type $b_1$ or $a_2$, then $|x^G| = 2(q^4-1)$ and $\chi(x) = (q+1)^2$, which implies that $|x^{G} \cap H| = 2(q^2-1)$. Similarly, if $x$ is of type $c_2$ then $|x^G| = (q^2-1)(q^4-1)$ and $\chi(x) = 2q+1$, which gives $|x^G \cap H| = (2q+1)(q-1)^2$. We conclude that the contribution to $\mathcal{Q}(G,3)$ from unipotent elements is precisely 
\[
\frac{(2(q^2-1))^3}{(2(q^4-1))^2} + \frac{((2q+1)(q-1)^2)^3}{((q^2-1)(q^4-1))^2} < 2q^{-2}.
\]

Now assume $r$ is odd and divides $q-1$ (note that $x^G \cap H$ is empty if $r$ does not divide $q-1$). Suppose $x$ is regular, so $r \geqs 5$, $|x^{G_0}| = q^4(q+1)^2(q^2+1)$ and $C_{\bar{G}}(x)$ is a maximal torus. Here we calculate that $\chi(x) = |W| = 8$ (for example, this is easily computed via \cite[Lemma 2.2.23]{GM}), which gives $|x^{G_0} \cap H_0| = 8q^4$. Since there are $\binom{(r-1)/2}{2} = \frac{1}{8}(r-1)(r-3)$ distinct $G_0$-classes of regular semisimple elements of order $r$, it follows that the combined contribution from regular semisimple elements is precisely
\[
\sum_{r \in \pi} \frac{1}{8}(r-1)(r-3) \cdot \frac{(8q^4)^3}{(q^4(q+1)^2(q^2+1))^2} = \sum_{r \in \pi} (r-1)(r-3) \cdot \frac{64q^4}{(q+1)^4(q^2+1)^2},
\]
where $\pi$ is the set of odd prime divisors of $q-1$. Since $|\pi| < \log q$ and $r \leqs q-1$, this is at most
\[
\b_1 = \frac{64(q-2)(q-4)q^4\log q}{(q+1)^4(q^2+1)^2}.
\]

Now assume $r$ is odd and $x$ is non-regular, so $|x^{G_0}| = q^3(q+1)(q^2+1)$. Using \cite[Lemma 2.2.23]{GM} we calculate that $\chi(x) = 4(q+1)$, which yields $|x^{G_0} \cap H_0| = 4q^3$. Since there are $r-1$ distinct $G_0$-classes of such elements, the contribution here is equal to 
\[
\sum_{r \in \pi} (r-1) \cdot \frac{(4q^3)^3}{(q^3(q+1)(q^2+1))^2} = \sum_{r \in \pi} 64(r-1) \cdot \frac{q^3}{(q+1)^2(q^2+1)^2},
\]
which is at most
\[
\b_2 = \frac{64(q-2)q^3\log q}{(q+1)^2(q^2+1)^2}.
\]

We conclude that the total contribution to $\mathcal{Q}(G,3)$ from semisimple elements is less than $\b_1+\b_2 < 12q^{-1}$.

Next assume $x \in G$ is a field automorphism of order $r$, so $q=q_0^r$ and 
\[
|x^{G_0}| = \frac{q^4(q^2-1)(q^4-1)}{q^{4/r}(q^{2/r}-1)(q^{4/r}-1)} = f(q,r).
\]
As before, $C_{H_0}(x)$ is a Borel subgroup of $C_{G_0}(x) = {\rm Sp}_{4}(q_0)$ and this implies that 
\[
|x^{G_0} \cap H_0x| = \frac{q^4(q-1)^2}{q^{4/r}(q^{1/r}-1)^2} = g(q,r).
\]
Since there are $r-1$ distinct $G_0$-classes of field automorphisms of order $r$ in ${\rm Aut}(G_0)$, it follows that the combined contribution to $\mathcal{Q}(G,3)$ from field automorphisms is equal to
\[
\varphi = \sum_{r \in \pi}(r-1) \cdot g(q,r)^3f(q,r)^{-2},
\]
where $\pi$ is the set of prime divisors of $f = \log q$. Set $e(q,r) = (r-1) \cdot g(q,r)^3f(q,r)^{-2}$. If $2^6 \leqs q \leqs 2^{11}$ then it is easy to check that $\varphi<\frac{1}{10}$, so let us assume that $q \geqs 2^{12}$.

If $f=r$ then $\varphi = e(q,r)$ and one checks that this is less than $q^{-1}$. Now assume $f$ is composite, so $q_0 = q^{1/r} \geqs 4$ for each $r \in \pi$.  This implies that 
\[
f(q,r) > q^{10\left(1-\frac{1}{r}\right)},\;\; g(q,r)<2q^{6\left(1-\frac{1}{r}\right)}
\]
and thus
\[
e(q,r) < 8(r-1)q^{-2\left(1-\frac{1}{r}\right)} = 8(r-1)q_0^{-2(r-1)} \leqs 4q^{-1}
\]
for all $r \geqs 3$. Since $e(q,2)<2q^{-1}$, we deduce that 
\[
\varphi < 2q^{-1}+|\pi| \cdot 4q^{-1} < 2q^{-1}+4q^{-1}\log\log q
\]
for all $q \geqs 2^{12}$.

Finally, let us assume $x$ is an involutory graph automorphism of $G_0$, so $f$ is odd and 
\[
|x^G\cap H| = \frac{q^4(q-1)^2}{q^2(q-1)} = q^2(q-1),\;\; |x^G| = \frac{|{\rm Sp}_{4}(q)|}{|{}^2B_2(q)|} = q^2(q^2-1)(q+1)
\]
since $C_{H_0}(x)$ is a Borel subgroup of $C_{G_0}(x) = {}^2B_2(q)$. Therefore, the contribution from these elements is
$q^2(q-1)(q+1)^{-4} < q^{-1}$.

By bringing the above bounds together, we conclude that if $q \geqs 64$ then
\[
\mathcal{Q}(G,3) < 13q^{-1}+2q^{-2}+\eta,
\]
where $\eta = \frac{1}{10}$ if $q \leqs 2^{11}$, otherwise $\eta = 2q^{-1}+4q^{-1}\log\log q$. Therefore, $\mathcal{Q}(G,3) <1$ for all $q \geqs 64$ and we also observe that $\mathcal{Q}(G,3) \to 0$ as $q \to \infty$.
\end{proof}

\begin{lem}\label{l:g2}
If $G_0 = G_2(q)$ and $H$ is of type $[q^6]{:}C_{q-1}^2$, then $b(G,H)=3$ and $\mathcal{P}(G,3) \to 1$ as $q \to \infty$.
\end{lem}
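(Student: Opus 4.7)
The strategy closely parallels Lemma \ref{l:sp4}. Write $q=3^f$ and set $\bar G=G_2(k)$ over the algebraic closure $k$ of $\mathbb F_3$, so that $G_0=\bar G_\sigma$ for a Steinberg endomorphism $\sigma$. The hypothesis that $H$ is maximal forces $G\not\leqs\la G_0,\phi\ra$, so $G$ contains an involutory graph or graph-field automorphism of $G_0$. We have
\[
|H_0|=q^6(q-1)^2,\quad |\O|=\frac{(q^6-1)(q^2-1)}{(q-1)^2},
\]
and a direct computation gives $\log|G|/\log|\O|>2$, so Lemma \ref{l:easy} yields $b(G,H)\geqs 3$. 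Small cases (say $q\leqs 27$) are dispatched using {\sc Magma} along the lines of Section \ref{ss:comp}; for the remainder we fix a threshold $q\geqs Q_0$ and prove $\mathcal{Q}(G,3)<1$ together with $\mathcal{Q}(G,3)\to 0$, invoking Lemma \ref{l:base}.

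The plan is to partition the elements of prime order in $H$ into four collections and bound each contribution separately. Since $H_0$ is a Borel subgroup of $G_0$, the permutation character is the Deligne--Lusztig character $\chi=R^{\bar G}_{\bar T}(1_{\bar T})$ corresponding to the trivial class of the Weyl group $W=D_{12}$; this admits an explicit decomposition into the ten unipotent characters of $G_2(q)$ (see \cite[Table 2.8]{GM}), whose values on the relevant conjugacy classes can be read off, analogously to the use of the tables for ${\rm Sp}_4(q)$. Using \cite[Lemma 2.2.23]{GM} one obtains $\chi(x)=|W|=12$ for every regular semisimple $x$, with smaller values for the non-regular classes and the various unipotent classes $A_1,\tilde A_1,(\tilde A_1)_3,G_2(a_1),G_2$ (the last three specific to characteristic $3$). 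The first three collections (unipotent elements, semisimple elements of order dividing $q-1$, and field automorphisms of prime order $r$ with $q=q_0^r$) are treated exactly as in the proof of Lemma \ref{l:sp4}: the regular semisimple contribution is bounded by a multiple of $q^{-1}\log q$ after summing over $\pi$ (the odd prime divisors of $q-1$) using $|\pi|<\log q$, while the field-automorphism sum is controlled by noting that $C_{H_0}(x)$ is a Borel subgroup of $C_{G_0}(x)=G_2(q_0)$, which fixes both $|x^{G_0}\cap H_0x|$ and the index $|x^{G_0}|$, giving a uniform bound of order $q^{-1}\log\log q$.

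The fourth collection is the contribution from involutory graph(-field) automorphisms, and this is where the argument departs slightly from the symplectic case, thanks to the exceptional isogeny of $G_2$ in characteristic $3$. If $x\in G\setminus G_0$ is an involution of this type, then $C_{G_0}(x)\cong {}^2G_2(q_0)$ for some $q_0$ with $q=q_0^{2^\epsilon}$ (with $\epsilon=0$ when $f$ is odd and $x$ is a pure exceptional graph automorphism, $\epsilon=1$ for a graph-field involution with $f$ even), and once again $C_{H_0}(x)$ is a Borel of this Ree-type centraliser, of order $q_0^3(q_0-1)$. This gives
\[
|x^{G_0}\cap H_0x|=\frac{q^6(q-1)^2}{q_0^3(q_0-1)},\qquad |x^{G_0}|=\frac{|G_2(q)|}{|{}^2G_2(q_0)|},
\]
from which one checks that the corresponding contribution to $\mathcal{Q}(G,3)$ is bounded by a small multiple of $q^{-1}$ in both subcases. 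Summing the four bounds gives $\mathcal{Q}(G,3)=O(q^{-1}\log\log q)$, which is less than $1$ for $q\geqs Q_0$ and tends to $0$ as $q\to\infty$, completing the proof and showing $\mathcal{P}(G,3)\to 1$.

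The principal obstacle is pinning down the exact values (or sharp enough bounds) of the unipotent characters on the additional characteristic-$3$ unipotent classes of $G_2(q)$ and verifying that the graph-field involution contribution is genuinely controlled by the order of ${}^2G_2(q_0)$; once these are in hand, the remaining estimates are direct imitations of the calculations in Lemma \ref{l:sp4}.
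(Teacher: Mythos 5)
Your proposal is in spirit the same probabilistic argument as the paper: lower-bound $b$ via Lemma~\ref{l:easy}, dispatch small $q$ by computer, decompose the permutation character, and estimate $\mathcal{Q}(G,3)$ contribution by contribution (unipotent, semisimple, field, graph). The one substantive inaccuracy concerns the involutory graph automorphisms. For $G_0=G_2(3^f)$ the outer automorphism group is cyclic of order $2f$, generated by the exceptional square root $\tau$ of the $3$-power Frobenius, so it contains a \emph{unique} involution, namely $\tau^f$. If $f$ is odd this is a graph automorphism with $C_{G_0}(\tau^f)={}^2G_2(q)$ (with the \emph{same} $q$); if $f$ is even then $\tau^f=\phi^{f/2}$ is a plain field automorphism with $C_{G_0}(\tau^f)=G_2(q^{1/2})$, and it has already been counted in your ``field automorphism'' collection. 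There is no involutory graph-field automorphism with centraliser ${}^2G_2(q^{1/2})$; the $\epsilon=1$ subcase in your fourth collection is phantom. Since you are only proving an upper bound on $\mathcal{Q}(G,3)$, including an extra nonexistent case does not invalidate the conclusion, but it reflects a misreading of $\Aut(G_2(q))$ and you should simply set $f$ odd and $q_0=q$ throughout that paragraph.

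Two smaller remarks. First, the paper decomposes $\chi$ into the almost characters $R_\phi$ of \cite[Section 2]{LLS2} and evaluates them via Green functions and \cite[Corollary 3.2]{LLS2}, rather than working directly with the ten unipotent characters via \cite[Table 2.8]{GM} (which is the route the paper takes for ${\rm Sp}_4(q)$); the two bookkeepings are equivalent but the $R_\phi$ version is cleaner here. Second, among the unipotent classes you list, $\tilde A_1$ and the regular class $G_2$ consist of elements of order $9$ when $p=3$, so only $A_1$, $(\tilde A_1)_3$ and $G_2(a_1)$ actually contribute to the prime-order sum; and of the classes you singled out as ``specific to characteristic $3$'', only $(\tilde A_1)_3$ is truly characteristic-specific.

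Overall: essentially correct and the same approach, modulo the phantom graph-field case and the minor bookkeeping points above.
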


\begin{proof}
Here $q = 3^f$ and the maximality of $H$ implies that $G$ contains graph automorphisms. The cases $q \in \{3,9\}$ can be handled using {\sc Magma}, so let us assume $q \geqs 27$. Note that 
\[
H_0 = [q^6]{:}C_{q-1}^2,\;\; |\O| = (q+1)(q^5+q^4+q^3+q^2+q+1)
\]
and $\log |G| / \log |\O|>2$, whence $b(G,H) \geqs 3$.

Let $\chi$ be the corresponding permutation character of $G_0$. As explained in \cite[Section 2]{LLS2}, we can decompose $\chi$ as a sum
\[
\chi = R_{\phi_{1,6}}+R_{\phi_{1,3}'}+R_{\phi_{1,3}''}+2R_{\phi_{2,2}}+2R_{\phi_{2,1}}+R_{\phi_{1,0}}
\]
where each $R_{\phi}$ is an almost character of $G_0$ labelled by an  irreducible character $\phi$ of the Weyl group of $G_0$ (here we are using the labelling given in \cite[Section 13.2]{Carter}). Let $x \in G_0$ be an element of prime order $r$. As usual, we may assume $r$ divides $|H_0|$, so either $r=3$ or $r$ divides $q-1$.

First assume $r=3$, so $x$ is unipotent. The restriction of the $R_{\phi}$ to unipotent elements are called the Green functions of $G_0$, which are polynomials in $q$ with non-negative coefficients. The full character table of $G_0$ is available in \cite{Eno} and all of the relevant Green functions have been computed (see \cite{Lub}, for example). This allows us to read off $\chi(x)$ for each element $x \in G_0$ of order $3$ and we obtain the following results, where we use the labels from \cite[Table 22.2.6]{LS_book} for the unipotent classes in the ambient algebraic group $\bar{G} = G_2$ (note that there are two $G$-classes of elements of type $G_2(a_1)$):
\[
\begin{array}{llll} \hline
 & \chi(x) & |x^G| & |x^G \cap H| \\ \hline
A_1 & (q+1)(q^2+q+1) & 2(q^6-1) & 2(q^2+q+1)(q-1) \\
(\tilde{A}_1)_3 & 2q^2+2q+1 & (q^2-1)(q^6-1) & (2q^2+2q+1)(q-1)^2 \\
G_2(a_1) & 2q+1 & \frac{1}{2}q^2(q^2-1)(q^6-1) & \frac{1}{2}q^2(q-1)^2(2q+1) \\ \hline
\end{array}
\]
We deduce that the contribution to $\mathcal{Q}(G,3)$ from unipotent elements is less than $q^{-2}$.

Now assume $r \ne 3$, so $r$ divides $q-1$ and $C_{\bar{G}}(x)$ is either $A_1\tilde{A}_1$ ($r=2$ only), $A_1T_1$, $\tilde{A}_1T_1$ or $T_2$, where $\tilde{A}_1$ denotes an $A_1$ subgroup generated by short root subgroups and $T_i$ is an $i$-dimensional torus. We refer the reader to \cite{Lu2} for a convenient source of information on the semisimple conjugacy classes in $G_0$ (the original reference is \cite{Chang}). In each case, we compute $\chi(x)$ by applying \cite[Corollary 3.2]{LLS2}, which can be implemented in {\sc Magma} (alternatively, one can also do this using \cite[Lemma 2.2.23]{GM}). In this way, we obtain the results presented below, where $n$ denotes the number of $G_0$-classes of semisimple elements with the given centraliser:
\[
\begin{array}{lllll} \hline
 & |C_{G_0}(x)| & n & \chi(x) &  |x^{G_0} \cap H_0| \\ \hline
A_1\tilde{A}_1 & q^2(q^2-1)^2  & 1 & 3(q+1)^2 &  3q^4 \\
A_1T_1 & q(q-1)(q^2-1) & \frac{1}{2}(q-3) & 6(q+1) &  6q^5 \\
\tilde{A}_1T_1 & q(q-1)(q^2-1) & \frac{1}{2}(q-3) & 6(q+1) &  6q^5 \\
T_2 & (q-1)^2 & \frac{1}{12}(q^2-8q+15) & 12 &  12q^6 \\ \hline
\end{array}
\]
One checks that the total contribution to $\mathcal{Q}(G,3)$ from semisimple elements is less than $q^{-2}$.

Now suppose $x \in G$ is a field automorphism of prime order $r$. Then $q=q_0^r$ and we have 
\[
|x^G \cap H| = \frac{q^6(q-1)^2}{q_0^6(q_0-1)^2} < 2q^{8\left(1-\frac{1}{r}\right)},\;\; |x^G| = \frac{|G_2(q)|}{|G_2(q_0)|} > q^{14\left(1-\frac{1}{r}\right)}
\]
so the contribution to $\mathcal{Q}(G,3)$ from these elements is less than
\[
\sum_{r \in \pi} (r-1)\cdot 8q^{-4\left(1-\frac{1}{r}\right)} < |\pi|\cdot q^{-1} < q^{-1}\log\log q,
\]
where $\pi$ is the set of prime divisors of $f = \log_3 q$.

Finally, suppose $x \in G$ is an involutory graph automorphism. Here $f$ is odd and $C_{H_0}(x)$ is a Borel subgroup of $C_{G_0}(x) = {}^2G_2(q)$, so
\[
|x^G \cap H| = q^3(q-1),\;\; |x^G| = q^3(q-1)(q^3-1)
\]
and the contribution from graph automorphisms is equal to $q^3(q-1)(q^3-1)^{-2}< q^{-2}$. We conclude that if $q \geqs 27$, then
\[
\mathcal{Q}(G,3) < 3q^{-2}+q^{-1}\log\log q
\]
and the result follows.
\end{proof}

Finally, we turn to the cases labelled (e) and (f) in Table \ref{tab:par}. 

\begin{lem}\label{l:pex}
Suppose $G_0 = {}^2B_2(q)$ or ${}^2G_2(q)$ and $H$ is of type $[q^2]{:}C_{q-1}$ or $[q^3]{:}C_{q-1}$, respectively. Then $b(G,H)=3$ and $\mathcal{P}(G,3) \to 1$ as $q \to \infty$.
\end{lem}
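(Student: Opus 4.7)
The plan is to apply the probabilistic method used throughout this section. By Lemma~\ref{l:easy}, a short calculation gives $\log|G|/\log|\O| > 2$ in both families (where $|\O| = q^2+1$ for Suzuki and $|\O| = q^3+1$ for Ree), so $b(G,H) \geqs 3$. For the reverse inequality and the statement about $\mathcal{P}(G,3)$, it suffices by Lemma~\ref{l:base} to prove that $\mathcal{Q}(G,3) < 1$ for all admissible $q$ and that $\mathcal{Q}(G,3) \to 0$ as $q \to \infty$. First I would dispatch the smallest cases ($q=8$ for $G_0 = {}^2B_2(q)$ and $q=27$ for $G_0 = {}^2G_2(q)$) by the computational methods of Section~\ref{ss:comp}.

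For the remaining $q$ I would identify the $G$-classes of prime-order elements meeting $H = N_G(P)$. A convenient feature of these two families is that ${\rm Out}(G_0)$ is cyclic of odd order $f = \log_p q$, generated by a field automorphism; there are no diagonal or graph automorphisms to worry about. The prime-order elements meeting $H$ therefore fall into three groups: (i) unipotent elements of order $p \in \{2,3\}$, all lying in the Sylow $p$-subgroup $P = O_p(H_0)$; (ii) semisimple elements of odd prime order $r$ dividing $q-1$, whose centralisers are contained in the split maximal torus $C_{q-1}$ of $H_0$ (elements of order dividing $q \pm \sqrt{2q}+1$ or $q\pm\sqrt{3q}+1$ lie in non-split tori that miss $H$); and (iii) field automorphisms of prime order $r \mid f$.

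For each class I would write down $|x^G|$ and $|x^G \cap H|$ using the standard structure of the Borel subgroup. For semisimple elements, $|C_{G_0}(x)| = q-1$ gives $|x^G| = |G_0|/(q-1)$, and $|x^G \cap H_0| = (r-1)|P:C_P(x)|$ since $x$ acts fixed-point-freely on $P$, yielding $|x^G \cap H_0| = (r-1)q^{e}$ with $e=2$ (Suzuki) or $e=3$ (Ree); summing over the at most $(r-1)/2$ classes per order and over $r \in \pi(q-1)$ contributes a term of order $q^{-2}\log q$ to $\mathcal{Q}(G,3)$. For unipotent elements, $|x^G \cap H|$ equals the size in $P$ of the corresponding class, and the standard class-size formulas for $P$ (all sizes are $\leqs q^e$) combined with $|x^G| \geqs (q-1)|\O|$ give a contribution $O(q^{-1})$. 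For field automorphisms, I would use $|x^G \cap H x| = |H_0|/|H_0 \cap C_{G_0}(x)|$ and $|x^G| = |G_0|/|C_{G_0}(x)|$ with $C_{G_0}(x)$ equal to a Suzuki or Ree group over $\mathbb{F}_{q^{1/r}}$ and $H_0 \cap C_{G_0}(x)$ its Borel; summing over $r \in \pi(f)$ gives a contribution bounded by $O(q^{-1}\log\log q)$.

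Combining these three bounds yields $\mathcal{Q}(G,3) = O(q^{-1}\log q)$, which is $<1$ once $q$ is not too small and tends to $0$ as $q \to \infty$, as required. The main obstacle is keeping the bookkeeping clean for the unipotent contribution in the Ree case, where $P$ has nilpotency class $3$ and several distinct orbits of involutions (in ${}^2G_2(q)$ one has elements of orders $3$ and $9$, but only order $3$ is relevant) with slightly different orbit sizes; however, since only finitely many unipotent classes occur (independently of $q$) and each individual contribution is $O(q^{-1})$, this is a routine computation once the class data from the structure of $P$ is in hand, and does not affect the asymptotics.
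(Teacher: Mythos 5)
The paper dispatches this lemma in one line by citing \cite[Theorem 3(i)]{BLS}, so you have taken a genuinely different route: you sketch a direct estimate of $\mathcal{Q}(G,3)$ from scratch, of the kind carried out for Lemmas \ref{l:psl3}--\ref{l:g2}. The overall structure (lower bound from $\log|G|/\log|\O|>2$; small $q$ by computer; then split prime-order elements of $H = N_G(P)$ into unipotent, semisimple, and field-automorphism contributions and bound each) is the right one and, suitably fleshed out, would work. The trade-off is that the citation buys a much shorter proof, while your route is self-contained and consistent with the style of the rest of the section.

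There is, however, one concrete gap in your case analysis for ${}^2G_2(q)$. You restrict case (ii) to semisimple elements of \emph{odd} prime order dividing $q-1$, and you justify this with the claim that such elements have centraliser contained in $C_{q-1}$, hence $|C_{G_0}(x)|=q-1$. But in the Ree case $q$ is odd, so $q-1$ is even and $H_0$ contains semisimple \emph{involutions}; these fall into none of your three groups. Moreover the centraliser claim fails for them: the involution centraliser in ${}^2G_2(q)$ is $C_2\times\mathrm{PSL}_2(q)$, of order $q(q^2-1)$, not $q-1$, so $|x^{G_0}|=q^2(q^2-q+1)$ is considerably smaller than $|G_0|/(q-1)$. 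The omission is not fatal to the conclusion — a short calculation shows the involution contribution to $\mathcal{Q}(G,3)$ is still $O(q^{-2})$ — but it must be treated as a separate case with its own centraliser and fixed-point count, and cannot be absorbed into the torus argument you give. Relatedly, your final paragraph refers to ``several distinct orbits of involutions'' in $P$ for ${}^2G_2(q)$; this is a slip, since $P$ is a $3$-group there and contains no involutions — you presumably meant the several classes of elements of order $3$ (long-root elements, and cubes of regular unipotent elements of order $9$), and this confusion should be cleaned up when doing the actual bookkeeping. Finally, your formula ``$|x^G\cap H_0|=(r-1)q^e$'' should be read as a total over all classes of order $r$, not a per-class count; as written, combining it with the further sum over $(r-1)/2$ classes would double-count.
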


\begin{proof}
This follows immediately from \cite[Theorem 3(i)]{BLS}. 
\end{proof}

This completes the proof of Proposition \ref{p:par}.

\section{Classical groups: Non-parabolic actions}\label{s:class}

We are now ready to complete the proof of Theorem \ref{t:as} for classical groups by handling the remaining cases where $G_0 \ne {\rm L}_{2}(q)$ and $H$ is a non-parabolic subgroup of $G$. We continue to assume (as we may) that $G_0$ satisfies the conditions presented in Remark \ref{r:isom}. Our main result is the following.

\begin{prop}\label{p:nonpar}
Let $G \leqs {\rm Sym}(\O)$ be a finite almost simple primitive classical group with socle $G_0$ and soluble point stabiliser $H$. Set $b=b(G,H)$ and assume $G_0 \ne {\rm L}_{2}(q)$ and $H$ is non-parabolic.  
\begin{itemize}\addtolength{\itemsep}{0.2\baselineskip}
\item[{\rm (i)}] We have $b \leqs 4$, with $b>2$ if and only if $(G,H,b)$ is one of the cases in Table \ref{tab:as4}.
\item[{\rm (ii)}] In addition, $\mathcal{P}(G,2) \to 1$ as $|G| \to \infty$.
\end{itemize}
\end{prop}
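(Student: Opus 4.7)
The plan is to combine the probabilistic approach of Section~\ref{ss:bases} with direct computation in \textsc{Magma} along the lines of Section~\ref{ss:comp}. First I would consult \cite[Tables 16--19]{LZ} to enumerate the soluble non-parabolic maximal subgroups $H$ of $G$. Solubility is a severe constraint on maximal subgroups of a classical group, and one finds that, apart from a bounded list of exceptions over $\mathbb{F}_2$ and $\mathbb{F}_3$, the relevant $H$ are either of type ${\rm GL}_{1}(q) \wr S_n$ (the stabiliser of a decomposition $V = V_1 \oplus \cdots \oplus V_n$ with $\dim V_i = 1$), of type ${\rm GL}_{1}(q^n)$ (a field extension subgroup), in Aschbacher's $\mathcal{C}_6$ collection (an extraspecial normaliser), or a maximal subgroup forced to be soluble by small rank. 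This gives a manageable case division, which I would organise according to the pair $(G_0, H)$ and treat in turn.

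For each generic infinite family the strategy is to prove that $\mathcal{Q}(G,2) < 1$ and that $\mathcal{Q}(G,2) \to 0$ as $q \to \infty$, which via Lemma~\ref{l:base} immediately gives both $b(G,H) = 2$ and $\mathcal{P}(G,2) \to 1$. To this end I would bound $|x^G \cap H|$ for each $G$-class representative $x$ of prime order in $H$ using the structure of $H$ (together with divisibility by $|H|$), and bound $|x^G|$ from below using the standard centraliser formulas in classical groups recorded in \cite[Section~3]{BG}. The contributions naturally split into unipotent, semisimple, field and graph (or graph-field) automorphism types; Lemma~\ref{l:calc} then allows one to group together the many classes of semisimple elements and the classes of field automorphisms of each prime order, since uniform upper bounds $A$ on $|x^G \cap H|$ and lower bounds $B$ on $|x^G|$ reduce the sum to $B(A/B)^2$. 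The essential quantitative point is that $|\Omega| = |G:H|$ grows polynomially in $q$ of degree equal to the dimension of the corresponding coset variety, while the counts of elements of prime order in $H$ are dominated by $|H|$, which is of much lower degree; so the bound $|H|^2 = o(|\Omega|)$ eventually gives $\mathcal{Q}(G,2) \to 0$.

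The main obstacle will be the finite list of exceptional configurations where either (a) the generic estimate on $\mathcal{Q}(G,2)$ is insufficient for small $q$, forcing a direct \textsc{Magma} computation of $b(G,H)$ using Lemma~\ref{l:easy} and random search, or (b) the group genuinely has $b(G,H) > 2$, producing precisely the entries of Table~\ref{tab:as4}. In case (b) I would still need to establish the upper bound $b(G,H) \leqs 4$; random search typically exhibits a base of the required size, after which the exact value of $b(G,H)$ is pinned down by examining stabiliser chains via \texttt{CosetAction}. However, for several classical groups over $\mathbb{F}_3$, such as the cases $G = {\rm Aut}({\rm U}_{6}(3))$ with $H$ of type ${\rm GU}_{2}(3) \wr S_3$ and $G = {\rm PGO}_{12}^{+}(3)$ with $H$ of type ${\rm O}_{4}^{+}(3) \wr S_3$, the default \texttt{MaximalSubgroups} routine is ineffective and $H$ must be constructed by hand as a normaliser inside the matrix group (as in Examples~\ref{e:u63} and \ref{e:o123}). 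Likewise, to show $b(G,H) \geqs 3$ in cases like $G_0 = {\rm P\O}_{8}^{+}(3)$ with $H$ of type ${\rm O}_{4}^{+}(3) \wr S_2$ and $|G:G_0|\geqs 6$, the permutation degree is too large for \texttt{CosetAction}, and I would instead appeal to the double coset enumeration technique described in Example~\ref{e:special}. Once these sporadic configurations are resolved and identified with entries of Table~\ref{tab:as4}, the asymptotic bound on $\mathcal{Q}(G,2)$ from the generic argument delivers $\mathcal{P}(G,2) \to 1$.
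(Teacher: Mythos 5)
Your proposal is essentially the same approach as the paper: enumerate the soluble non-parabolic maximal subgroups via \cite[Tables 16--19]{LZ}, handle the infinite families by bounding $\mathcal{Q}(G,2)$ via Lemma~\ref{l:calc} and the class-size estimates of \cite[Section~3]{BG}, dispose of the sporadic small-$q$ cases in \textsc{Magma} (with the normaliser and double-coset tricks exactly as you describe), and read off Table~\ref{tab:as4}. The one refinement in the paper that your sketch glosses over is that for the two largest exceptional configurations --- $G_0 = {\rm U}_{12}(2)$ with $H$ of type ${\rm GU}_{3}(2) \wr S_4$ and $G_0 = {\rm P\O}_{16}^{+}(3)$ with $H$ of type ${\rm O}_{4}^{+}(3) \wr S_4$ --- even the by-hand normaliser construction is not enough for \textsc{Magma}, so one falls back on a hand-crafted $\mathcal{Q}(G,2)<1$ estimate, which is already one of the tools in your toolkit.
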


In order to prove this result, we first need to determine the possibilities for $G$ and $H$. 

\begin{lem}\label{l:nonpar1}
Let $G$ be a finite almost simple classical group over $\mathbb{F}_q$ with socle $G_0 \ne {\rm L}_{2}(q)$ and a maximal soluble non-parabolic subgroup $H$. Then one of the following holds:
\begin{itemize}\addtolength{\itemsep}{0.2\baselineskip}
\item[{\rm (i)}] $G_0 \in \{  {\rm L}_{3}^{\e}(q), {\rm L}_{4}^{\e}(q), {\rm L}_{5}^{\e}(q), {\rm PSp}_{6}(q) \} \cup \{{\rm L}_{3}(4), \O_7(3), {\rm PSp}_{8}(3), \O_{8}^{+}(2)\}$ with $q \in \{2,3\}$.
\item[{\rm (ii)}] $G_0 = {\rm L}_{n}^{\e}(3)$, $n \in \{6,8\}$ and $H$ is of type ${\rm GL}_{2}^{\e}(3) \wr S_{n/2}$.
\item[{\rm (iii)}] $G_0 = {\rm U}_{n}(2)$, $n \in \{6,9,12\}$ and $H$ is of type ${\rm GU}_{3}(2) \wr S_{n/3}$.
\item[{\rm (iv)}] $G_0 = {\rm P\O}_{n}^{+}(3)$, $n \in \{8,12,16\}$ and $H$ is of type ${\rm O}_{4}^{+}(3) \wr S_{n/4}$.
\item[{\rm (v)}] $(G,H)$ is one of the cases recorded in Table \ref{tab:nonpar}.
\end{itemize}
\end{lem}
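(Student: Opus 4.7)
The plan is to read off the conclusion from the classification of Li and Zhang \cite{LZ}, which lists the soluble maximal subgroups of every finite almost simple classical group. More precisely, I would consult \cite[Tables 16--19]{LZ}, which are organised via Aschbacher's theorem, and then prune the list in three stages: discard the entries where $H$ is a maximal parabolic subgroup (handled in Lemma~\ref{l:par1}), discard the rows with $G_0 = {\rm L}_2(q)$ (handled in Section~\ref{s:psl2}), and discard the socles excluded by Remark~\ref{r:isom} via the exceptional isomorphisms. What remains is then partitioned according to the resulting dimension and field.

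The main mechanism behind the classification is that solubility of the geometric subgroup forces very restrictive parameters. I would go through Aschbacher's classes in turn. In class $\mathcal{C}_2$, the relevant subgroups are wreath products $\mathrm{GL}_k^{\varepsilon}(q)\wr S_m$, $\mathrm{Sp}_{2k}(q)\wr S_m$ or ${\rm O}_k^{\varepsilon'}(q)\wr S_m$ stabilising a direct-sum decomposition of the natural module; solubility of the base group singles out precisely the tuples $(k,q,\varepsilon)=(2,3,\pm)$, $(3,2,-)$ and $(k,\varepsilon',q)=(4,+,3)$, giving the infinite families listed in parts (ii), (iii) and (iv). Classes $\mathcal{C}_3$ (field extensions), $\mathcal{C}_5$ (subfield subgroups), $\mathcal{C}_6$ (normalisers of symplectic-type $r$-groups), $\mathcal{C}_7$ (tensor-induced) and $\mathcal{C}_8$ (classical subgroups) can each only contribute soluble maximal subgroups in bounded dimension over small fields, so these either collapse into the sporadic low-rank cases collected in part (i) or feed the infinite families recorded in Table~\ref{tab:nonpar} for part (v). The aspirant class $\mathcal{S}$ contributes nothing new, since an almost simple subgroup is never soluble.

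The residual cases with $n$ and $q$ both small are collected in part (i), where the socle $G_0$ is one of ${\rm L}_n^{\varepsilon}(q)$ for $n\leq 5$, ${\rm PSp}_6(q)$ with $q\in\{2,3\}$, or one of the handful of further low-dimensional exceptions such as ${\rm L}_3(4)$, $\Omega_7(3)$, ${\rm PSp}_8(3)$, $\Omega_8^+(2)$; these are precisely the entries in \cite[Tables 16--19]{LZ} that survive the pruning above but do not fit into any of the infinite families. The bookkeeping task, which I expect to be the main obstacle, is to check carefully that every row of the Li--Zhang tables is accounted for under exactly one of (i)--(v) (taking into account isomorphisms between socles via Remark~\ref{r:isom}), and conversely that every entry of Table~\ref{tab:nonpar} does correspond to a genuine soluble maximal subgroup. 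Both directions reduce to consulting the detailed structure and maximality criteria in \cite[Chapter~8]{BHR}, from which the final list can be verified line by line.
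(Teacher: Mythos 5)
Your proposal takes essentially the same route as the paper, which proves this lemma in one line by inspecting \cite[Tables 16--19]{LZ}; your more detailed run through the Aschbacher classes is a reasonable unpacking of that inspection. One small inaccuracy worth noting: the $\mathcal{C}_2$ tuples giving soluble base groups are not \emph{precisely} those in (ii)--(iv), and Table~\ref{tab:nonpar} is not fed only by $\mathcal{C}_3$--$\mathcal{C}_8$ --- cases (b), (c) and (e) of that table are also $\mathcal{C}_2$ stabilisers (with soluble factors ${\rm GL}_1^{\e}(q)$ or ${\rm O}_2^{\e}(q)$ of unbounded field size), so $\mathcal{C}_2$ contributes to part (v) as well; this does not affect the correctness of the overall pruning.
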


\begin{table}
\[
\begin{array}{clll} \hline
\mbox{Case} & G_0 & \mbox{Type of $H$} & \mbox{Conditions} \\ \hline
{\rm (a)} & {\rm L}_{n}^{\e}(q) & {\rm GL}_{1}^{\e}(q^n) & \mbox{$n \geqs 3$ prime; $(n,q,\e) \ne (3,3,-)$, $(5,2,-)$} \\
{\rm (b)} & {\rm L}_{n}^{\e}(q) & {\rm GL}_{1}^{\e}(q) \wr S_n   & \mbox{$n = 3,4$; $q \geqs 5$ if $\e=+$} \\
{\rm (c)} & {\rm P\O}_{8}^{+}(q) & {\rm O}_{2}^{\e}(q) \wr S_4  & \mbox{$q \geqs 5$ if $\e=+$} \\ 
{\rm (d)} & {\rm P\O}_{8}^{+}(q) & {\rm O}_{2}^{-}(q^2) \times {\rm O}_{2}^{-}(q^2)  & G \not\leqs \la {\rm PGO}_{8}^{+}(q), \phi \ra \\
{\rm (e)} & {\rm Sp}_{4}(q) & {\rm O}_{2}^{\e}(q) \wr S_2 &  \mbox{$q \geqs 4$ even, $G \not\leqs \la G_0, \phi \ra$} \\
{\rm (f)} & {\rm Sp}_{4}(q) & {\rm O}_{2}^{-}(q^2) &  \mbox{$q \geqs 4$ even, $G \not\leqs \la G_0, \phi \ra$} \\
{\rm (g)} & {\rm U}_{3}(q) & {\rm GU}_{3}(2) & \mbox{$q=2^k$, $k \geqs 3$ prime} \\
{\rm (h)} & {\rm L}_{3}^{\e}(q) & 3^{1+2}.{\rm Sp}_{2}(3) & q = p \equiv \e \imod{3} \\ \hline
\end{array}
\]
\caption{Non-parabolic actions of classical groups}
\label{tab:nonpar}
\end{table}

\begin{proof}
This follows by inspecting \cite[Tables 16--19]{LZ}.
\end{proof}

\begin{prop}\label{p:nonpar1}
Proposition \ref{p:nonpar} holds in cases (i)--(iv) of Lemma \ref{l:nonpar1}.
\end{prop}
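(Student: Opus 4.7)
The plan is to verify the proposition by direct computation in \textsc{Magma}, following the general strategy laid out in Section \ref{ss:comp}. All four cases (i)--(iv) consist of a finite list of explicit groups of manageable size (in case (i) we have $q \in \{2,3\}$ and the rank is bounded, and in (ii)--(iv) we have at most three wreath factors over $\mathbb{F}_2$ or $\mathbb{F}_3$), so the problem reduces to a finite enumeration rather than a uniform theoretical argument.

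First I would construct $G$ and $H$ as permutation groups. In most cases this is immediate using \texttt{AutomorphismGroupSimpleGroup} to build ${\rm Aut}(G_0)$, \texttt{LowIndexSubgroups} to identify the relevant overgroup of $G_0$, and \texttt{MaximalSubgroups(G:IsSolvable:=true)} to locate the representative of the conjugacy class of $H$. For the large cases in (ii)--(iv) where \texttt{MaximalSubgroups} is ineffective, I would follow the two methods illustrated in Examples \ref{e:u63} and \ref{e:o123}: either realise $H$ as the normaliser of a small characteristic subgroup (as for $H$ of type ${\rm GU}_2(3) \wr S_3$ in ${\rm Aut}({\rm U}_6(3))$), or construct $H$ as a matrix group via \texttt{ClassicalMaximals} and then pass to the projective permutation representation (as for $H$ of type ${\rm O}_4^+(3) \wr S_3$ in ${\rm PGO}_{12}^+(3)$).

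Once $G$ and $H$ are realised, I would determine $b(G,H)$ by combining Lemma \ref{l:easy} with a random search: compute $c = \lceil \log|G|/\log|\Omega|\rceil$ as a lower bound and then look for elements $x_1, \ldots, x_{c-1} \in G$ with $H \cap H^{x_1} \cap \cdots \cap H^{x_{c-1}} = 1$. When this succeeds, we obtain $b(G,H) = c$ and, at the same time, evidence that $\mathcal{P}(G,2) \to 1$ along the sequence, since for large enough groups in these families random pairs will typically form a base. When random search yields only a bound $b(G,H) \leq b > c$, I would certify $b(G,H) = b$ by constructing the coset action of $G$ on $\Omega$ via \texttt{CosetAction} and inspecting the orbits of $k$-point stabilisers along a stabiliser chain to show that no $(b-1)$-point stabiliser is trivial.

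The main obstacle will be the largest cases in (iv), in particular $G_0 = {\rm P\O}_{16}^+(3)$ with $H$ of type ${\rm O}_4^+(3) \wr S_4$, and likewise the large cases of (ii) and (iii) where $|\Omega|$ is too big for \texttt{CosetAction} to be practical. Here the \texttt{CosetAction} approach is infeasible, and I would instead apply the double-coset enumeration technique from \cite[Section 2.3.3]{BOW}, exactly as in Example \ref{e:special}: find a set $T$ of $(H,H)$-double coset representatives with $|HxH| < |H|^2$ for all $x \in T$ and $\sum_{x \in T}|HxH| > |G| - |H|^2$, which rules out a regular orbit of $H$ on $\Omega$ and forces $b(G,H) \geq 3$. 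Combined with a random search producing a base of size $3$, this pins down $b(G,H) = 3$ and confirms the entries in Table \ref{tab:as4}. Finally, after completing the tabulation, the bound $b \leq 4$ and the assertion $\mathcal{P}(G,2) \to 1$ as $|G| \to \infty$ follow by inspection, since within each infinite family of Lemma \ref{l:nonpar1} only finitely many groups arise in (i)--(iv).
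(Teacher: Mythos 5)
Your plan correctly identifies the broad strategy for case (i) and for the smaller cases of (ii)--(iv), which the paper does handle by direct computation in {\sc Magma} exactly as you describe: building $G$ and $H$ via \texttt{AutomorphismGroupSimpleGroup}, \texttt{LowIndexSubgroups} and \texttt{MaximalSubgroups}, with the special-purpose constructions of Examples \ref{e:u63} and \ref{e:o123} for the cases where \texttt{MaximalSubgroups} is ineffective, and the double-coset enumeration of Example \ref{e:special} to certify $b(G,H)\geq 3$ in the one genuine $b=3$ instance ($G_0 = {\rm P\O}_8^{+}(3)$, $H$ of type ${\rm O}_4^{+}(3)\wr S_2$, $|G:G_0|\geq 6$). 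So far so good.

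The gap is in how you treat the two largest cases, $G_0 = {\rm U}_{12}(2)$ with $H$ of type ${\rm GU}_3(2)\wr S_4$, and $G_0 = {\rm P\O}_{16}^{+}(3)$ with $H$ of type ${\rm O}_4^{+}(3)\wr S_4$. You propose to apply double-coset enumeration to rule out a regular orbit of $H$ and hence show $b(G,H)\geq 3$, then to find a base of size $3$ by random search, concluding $b(G,H)=3$. But the correct value in both cases is $b(G,H)=2$ --- neither appears in Table \ref{tab:as4}. So the double-coset enumeration cannot succeed (there is no set $T$ with the required covering property, because $H$ does have a regular orbit), and you would be left with no way to certify $b(G,H)=2$ since, as you observe, the coset action is unmanageable and even constructing $H$ inside a usable permutation representation is problematic at this scale. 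You have in effect assumed the wrong answer, and chosen a tool that can only prove a lower bound of $3$, when what is needed is an upper bound of $2$. The paper's workaround is to drop the computational approach entirely for these two cases and instead show $\mathcal{Q}(G,2)<1$ by a hand estimate: it partitions the prime-order elements of $H$ into a small number of classes, bounds $|x^G\cap H|$ above and $|x^G|$ below for each class (using \cite[Sections 3.3, 3.5]{BG} and \cite[Propositions 4.2.9, 4.2.11]{KL}), and applies Lemma \ref{l:calc}. That probabilistic method is the essential missing ingredient in your proposal; without it the argument for these two groups does not go through.
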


\begin{proof}
All of the groups arising in part (i) can be handled using {\sc Magma} in the usual manner (via \texttt{AutomorphismGroupSimpleGroup} and \texttt{MaximalSubgroups}). 

Now let us consider the cases in (ii), (iii) and (iv). The case $G_0 = {\rm U}_{6}(2)$ with $H$ of type ${\rm GU}_{3}(2) \wr S_2$ can be treated in the same way as those in (i) and one checks that $b(G,H) = 3$. The special case where $G_0 = {\rm P\O}_{8}^{+}(3)$ and $H$ is of type ${\rm O}_{4}^{+}(3) \wr S_2$ was discussed in Example \ref{e:special} and we find that $b(G,H) \leqs 3$, with equality if and only if $|G:G_0| \geqs 6$.

In all of the remaining cases, we claim that $b(G,H) = 2$. To prove this, we may assume that $G = {\rm Aut}(G_0)$. If $G_0 = {\rm L}_{6}(3)$ then we can use the \texttt{MaximalSubgroups} function in {\sc Magma} to construct $H$ and we quickly deduce that $b(G,H) = 2$ by random search. In the remaining cases, we have
\[
G_0 \in \{ {\rm U}_{6}(3), {\rm L}_{8}^{\e}(3), {\rm U}_{9}(2), {\rm U}_{12}(2), {\rm P\O}_{12}^{+}(3), {\rm P\O}_{16}^{+}(3)\}
\]
and the \texttt{MaximalSubgroups} function is ineffective.

As noted in Example \ref{e:u63}, if $G_0 = {\rm U}_{6}(3)$ and $H$ is of type ${\rm GU}_{2}(3) \wr S_3$, then $H = N_{G}(K)$ for some subgroup $K$ of order $2^{10}$ and we can use this observation to construct $G$ and $H$ as permutation groups of degree $22204$ (see Example \ref{e:u63} for the details). It is then straightforward to find an element $x \in G$ with $H \cap H^x = 1$. The cases with $G_0 = {\rm L}_{8}^{\e}(3)$ can be handled in an entirely similar fashion, using the fact that $H = N_{G}(K)$ with $|K| = 2^{12-\e}$. Similarly, if $G_0 = {\rm U}_{9}(2)$ and $H$ is of type ${\rm GU}_{3}(2) \wr S_3$, then $H = N_G(K)$ for a subgroup $K<G_0$ of order $3^8$ and we can treat this case in the same way. We refer the reader to Example \ref{e:o123} for the case where $G_0 = {\rm P\O}_{12}^{+}(3)$ and $H$ is of type ${\rm O}_{4}^{+}(3) \wr S_3$.

The final two cases are more difficult to handle computationally and we will show that $b(G,H) =2$ by establishing the bound $\mathcal{Q}(G,2)<1$. 

First assume $G_0 = {\rm U}_{12}(2)$ and $H$ is of type ${\rm GU}_{3}(2) \wr S_4$. By \cite[Proposition 4.2.9]{KL} we have
\[
H_0 = [3^3].{\rm U}_{3}(2)^4.3^3.S_4,\;\; 
H \cap {\rm PGU}_{12}(2) \leqs [3^3].({\rm PGU}_{3}(2) \wr S_4).
\]
Let $N = [3^3]$ be the normal subgroup of $H_0$ and let us view $H$ as the stabiliser in $G$ of an orthogonal decomposition $V = V_1 \perp V_2 \perp V_3 \perp V_4$ of the natural module, where each $V_i$ is a non-degenerate $3$-space. Let $x \in H$ be an element of order $r$, so $r \in \{2,3\}$. We refer the reader to \cite[Section 3.3]{BG} for information on the conjugacy classes of prime order elements in ${\rm Aut}(G_0)$, which we will repeatedly use in the following analysis. Note that $|H|<2^{42} = a_1$. Recall that if $X$ is a subset of a finite group, then $i_r(X)$ denotes the number of elements of order $r$ in $X$.

First assume $r=3$, so $x$ is semisimple. If some conjugate of $x$ induces a nontrivial permutation of the $V_i$, then each cube root of unity arises as an eigenvalue of $x$ on $V$ with multiplicity at least $3$ and we deduce that  
\[
|x^G| \geqs \frac{|{\rm GU}_{12}(2)|}{|{\rm GU}_{6}(2)||{\rm GU}_{3}(2)|^2} > 2^{89} = b_1.
\]
Now assume every element in $x^G \cap H$ fixes each $V_i$ and observe that there are at most
\[
|N|\cdot (1 + i_3({\rm PGU}_{3}(2)^4)) < 2^{31} = a_2
\]
such elements in $H$. Therefore, the contribution to $\mathcal{Q}(G,2)$ from these elements $x \in H$ of order $3$ with $|x^G|>3.2^{62} = b_2$ is less than $a_2^2/b_2 = \frac{1}{3}$. So let us assume $|x^G| \leqs 3.2^{62}$. Then one checks that the possibilities for $x$, up to ${\rm Aut}(G_0)$-conjugacy, are as follows (here $\mathbb{F}_4^{\times} = \la \omega \ra$):
\[
\begin{array}{llll} \hline
i & x & a_i & b_i \\ \hline
3 & [I_{11},\omega] & 48 & 2^{21} \\
4 & [I_{10},\omega I_2] & 912 & 2^{39} \\
5 & [I_{10}, \omega, \omega^2] & 1824 & 2^{40} \\
6 & [I_9, \omega I_3] & 8644 & 2^{53} \\
7 & [I_9, \omega I_2, \omega^2] & 22512 & 2^{56} \\ \hline
\end{array}
\]

In this table, we also record bounds $|x^{G_0} \cap H| \leqs a_i$ and $|x^{G_0}| > b_i$. For example, if $x$ is the image of $[I_{10},\omega, \omega^2] \in {\rm GU}_{12}(2)$ then
\[
|x^{G_{0}}| = \frac{|{\rm GU}_{12}(2)|}{|{\rm GU}_{10}(2)||{\rm GU}_{1}(2)|^2} > 2^{40}
\]
and we calculate that
\[
|x^{G_0} \cap H| \leqs \binom{4}{1}\frac{|{\rm GU}_{3}(2)|}{|{\rm GU}_{1}(2)|^3} + 2\binom{4}{2}\left(\frac{|{\rm GU}_{3}(2)|}{|{\rm GU}_{2}(2)||{\rm GU}_{1}(2)|}\right)^2 = 1824.
\]
We conclude that the combined contribution to $\mathcal{Q}(G,2)$ from elements of order $3$ is less than
\[
a_1^2/b_1 + a_2^2/b_2 + 2\sum_{i=3}^{7}a_i^2/b_i < \frac{1}{2}.
\]

Now let us assume $x \in H$ is an involution. Suppose for now that $x$ is unipotent, so $x$ has Jordan form $[J_2^k,J_1^{12-2k}]$ for some $1 \leqs k \leqs 6$. If $x = [J_2, J_1^{10}]$ then 
\[
|x^G \cap H| \leqs \binom{4}{1}\frac{|{\rm GU}_{3}(2)|}{2^3|{\rm GU}_{1}(2)|^2} = 36 = a_8,\;\; |x^G| =  \frac{|{\rm GU}_{12}(2)|}{2^{21}|{\rm GU}_{10}(2)||{\rm GU}_{1}(2)|} > 2^{21} = b_8.
\]
Similarly, if $x = [J_2^2,J_1^{8}]$ then $|x^G \cap H| \leqs \binom{4}{2}9^2 = 486 = a_9$ and $|x^G|>2^{39} = b_9$. 
For all other unipotent involutions, one checks that $|x^G|>2^{53}=b_{10}$ and we note that 
\[
i_2(H_0) \leqs i_2({\rm PGU}_{3}(2) \wr S_4) = 279567 < 2^{19} = a_{10}.
\]
Therefore, the total contribution to $\mathcal{Q}(G,2)$ from unipotent involutions is less than 
\[
\sum_{i=8}^{10}a_i^2/b_i < 2^{-10}.
\]

To complete the argument in this case,  we may assume $x \in H$ is an involutory graph automorphism. If $C_{G_0}(x) = {\rm Sp}_{12}(2)$, then $|x^G|>2^{63} = b_{11}$ and the proof of \cite[Proposition 2.7]{Bur3} gives the bound $|x^G \cap H| \leqs |{\rm GU}_{3}(2)|^2 < 2^{19} = a_{11}$. On the other hand, if $C_{G_0}(x) \ne {\rm Sp}_{12}(2)$, then $|x^G|>2^{75} = b_{12}$ and we note that $H$ contains fewer than
\[
|H_0/N| = |{\rm U}_{3}(2)|^4.3^3.|S_4| < 2^{35} = a_{12}
\]
involutory graph automorphisms. Therefore, the contribution from graph automorphisms is less than $a_{11}^2/b_{11}+ a_{12}^2/b_{12} < 2^{-4}$ and we conclude that
\[
\mathcal{Q}(G,2) < 2^{-1}+2^{-10}+2^{-4} < 1,
\]
which implies that $b(G,H) = 2$.

Finally, let us assume $G_0 = {\rm P\O}_{16}^{+}(3)$ and $H$ is of type ${\rm O}_{4}^{+}(3) \wr S_4$. We may view $H$ as the stabiliser of an orthogonal decomposition $V = V_1 \perp V_2 \perp V_3 \perp V_4$, where each $V_i$ is a $4$-dimensional non-degenerate plus-type space. By \cite[Proposition 4.2.11]{KL} we have
\[
H_0 = 2^3.{\rm P\O}_{4}^{+}(3)^4.2^6.S_4,\;\; H \leqs 2^3.{\rm PO}_{4}^{+}(3)^4.2.S_4.
\]
Let $N = 2^3$ be the normal subgroup of $H_0$ and observe that
\[
i_2(H) \leqs |N|\cdot (1+i_2({\rm PGO}_{4}^{+}(3) \wr S_4)) < 3^{20},\;\; i_3(H) \leqs i_3({\rm PGO}_{4}^{+}(3) \wr S_4) < 3^{19}.
\]
Let $x \in H$ be an element of prime order $r$, so $r \in \{2,3\}$. See \cite[Section 3.5]{BG} for detailed information on the conjugacy classes of elements of prime order in orthogonal groups.

First assume $r=3$, so $x$ is unipotent. If $x$ has Jordan form $[J_2^2,J_1^{12}]$, then $|x^G|> 3^{26} = b_{1}$ and we calculate that there are at most 
\[
a_1 =\binom{4}{1}\frac{|{\rm O}_{4}^{+}(3)|}{3|{\rm Sp}_{2}(3)|} = 288
\]
of these elements in $H$. Similarly, if $x = [J_3,J_1^{13}]$ then $|x^G|> 3^{27}= b_2$ and $H$ contains at most $a_2 = 256$ such elements. For all other elements of order $3$ we have $|x^G|> 3^{44}= b_3$ (minimal if $x$ has Jordan form $[J_2^4,J_1^8]$) and $i_3(H) < 3^{19} = a_3$ as noted above.

Now assume $x \in H$ is an involution. Since $i_2(H) < 3^{20} = a_4$, it follows that the contribution to $\mathcal{Q}(G,2)$ from the elements with $|x^G| > 3^{47} = b_4$ is less than $a_4^2/b_4 = 3^{-7}$. Now assume $|x^G| \leqs 3^{47}$, which implies that $x$ is the image of an involution in ${\rm GO}_{16}^{+}(3)$ of the form $[-I_{\ell},I_{16-\ell}]$ with $\ell \leqs 3$. In particular, $x$ fixes each $V_i$ in the above orthogonal decomposition stabilised by $H$. Set $m = i_2({\rm PGO}_{4}^{+}(3)) = 123$.

If $x = [-I_{1},I_{15}]$ then 
\[
|x^G| \geqs \frac{|{\rm O}_{16}^{+}(3)|}{2|{\rm O}_{15}(3)|} >3^{14} = b_5
\]
and there are fewer than $a_5 = \binom{4}{1}m = 492$ such elements in $H$. Similarly, if $x = [-I_2,I_{14}]$ then $|x^G|>3^{27} = b_6$ and $H$ contains at most $a_6 = \binom{4}{2}m^2 + \binom{4}{1}m = 91266$ such elements. Finally, if $x = [-I_3,I_{13}]$ then $|x^G| > 3^{37} = b_7$ and there are less than $a_7 = \binom{4}{3}m^3+2\binom{4}{2}m^2+\binom{4}{1}m = 7625508$ of these elements in $H$.

We conclude that
\[
\mathcal{Q}(G,2) < \sum_{i=1}^{7}a_i^2/b_i < 1
\]
and the result follows.
\end{proof}

For the remainder of this section, we consider each of the infinite families in Table \ref{tab:nonpar} in turn. We continue to assume that $G_0$ satisfies the conditions described in Remark \ref{r:isom}. For example, in the statement of the next lemma, the case $G_0 = {\rm L}_{3}(2)$ is excluded.

\begin{lem}\label{l:c3}
Suppose $G_0 = {\rm L}_{n}^{\e}(q)$ and $H$ is of type ${\rm GL}_{1}^{\e}(q^n)$, where $n \geqs 3$ is a prime. Then $b(G,H) \leqs 3$, with equality if and only if $G = {\rm L}_{3}(3).2$. In addition, $\mathcal{P}(G,2) \to 1$ as $|G| \to \infty$.
\end{lem}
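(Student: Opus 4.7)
The plan is to apply the probabilistic method encoded by Lemmas \ref{l:base} and \ref{l:calc}: estimate $\mathcal{Q}(G,2)$, show it tends to $0$ as $|G| \to \infty$ outside a short explicit list of small cases, and then dispatch those cases in {\sc Magma}. By \cite[Proposition 4.3.6]{KL} we have $H_0 = C_{(q^n-\e)/((q-\e)d)}.n$ with $d = (n,q-\e)$, so $|H|$ is bounded by a polynomial of degree $n-1$ in $q$ (up to factors of $n$, $d$ and $f = \log_pq$), whereas $|\Omega| = |G_0:H_0|$ grows like $q^{n^2-n}/n$. In particular, Lemma \ref{l:easy} already forces $b(G,H) \geqs 2$ for all but a handful of small pairs $(n,q)$.

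To bound $\mathcal{Q}(G,2)$ I would split the prime-order elements of $H$ into three batches. First, nontrivial elements of the cyclic torus $C_{(q^n-\e)/((q-\e)d)}$: since $n$ is prime, each such element acts without fixing any proper nonzero $\mathbb{F}_q$-subspace of the natural module, so its $G_0$-centraliser lies in the torus and $|x^{G_0}| \geqs q^{n^2-n}/c$ for an absolute constant $c$. Second, elements of order $n$ lying above the torus: their centralisers are again small and yield class sizes of the same magnitude. Third, prime-order elements in $G \setminus G_0$ (diagonal, field and graph automorphisms), to which the fixed-point ratio estimates summarised in \cite[Section 3]{BG} and used throughout the preceding sections of the paper apply. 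In every case $|x^G \cap H| \leqs |H|$, so Lemma \ref{l:calc} yields
\[
\mathcal{Q}(G,2) \leqs c_n\frac{|H|^2}{q^{n^2-n}} = O\!\left(n^2d^2f^2q^{-(n-1)(n-2)}\right),
\]
which tends to $0$ as $q\to\infty$ for any fixed prime $n \geqs 3$. This simultaneously delivers $b(G,H) \leqs 2$ and $\mathcal{P}(G,2) \to 1$ outside a finite explicit list of exceptional $(G,H)$.

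For the residual small cases I would follow Section \ref{ss:comp}: construct $G$ via \texttt{AutomorphismGroupSimpleGroup} and $H$ via \texttt{MaximalSubgroups(G:IsSolvable:=true)}, then either find $x \in G$ with $H \cap H^x = 1$ (giving $b=2$) or invoke \texttt{CosetAction} and inspect stabiliser chains to show that $H$ has no regular orbit on $\Omega$, the latter combined with a random $3$-tuple having trivial stabiliser giving $b = 3$. The expected outcome is that the only exception is $G = {\rm L}_{3}(3).2$, where $|\Omega| = 144$ is small enough for a direct orbit enumeration and where the extension of ${\rm L}_{3}(3)$ by a graph automorphism destroys the unique regular $H_0$-orbit. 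The main obstacle will be the bookkeeping in the third batch: keeping track of the classes of field and graph(-field) automorphisms meeting $H$ and verifying that their contributions do not dominate, particularly in the unitary case where $\e=-$ and $d$ depends on $q+1$. Once that is in place, both the asymptotic estimate and the {\sc Magma} verification proceed routinely.
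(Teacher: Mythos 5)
Your plan matches the paper's at the structural level---estimate $\mathcal{Q}(G,2)$ via Lemmas \ref{l:base} and \ref{l:calc}, invoke the torus structure $H_0 = C_m{:}C_n$ from \cite[Proposition 4.3.6]{KL}, and finish off a short list of small cases in {\sc Magma}, with ${\rm L}_{3}(3).2$ the lone exception. But the central estimate $\mathcal{Q}(G,2) \leqs c_n|H|^2/q^{n^2-n}$ is wrong, and it fails exactly in the case $n=3$ that demands the most care. Lemma \ref{l:calc} requires the lower bound $B$ on class sizes to hold \emph{uniformly} over all the relevant classes, and $q^{n^2-n}/c_n$ is not a valid lower bound for the outer automorphism classes: when $\e=+$ and $q=q_0^2$, an involutory field (or graph-field) automorphism $x$ has $C_{G_0}(x)$ a subfield subgroup of order roughly $q^{(n^2-1)/2}$, so $|x^G|\asymp q^{(n^2-1)/2}$, which for $n=3$ is $\asymp q^4$, not $q^6$. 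Your blanket bound $|x^G\cap H|\leqs|H|$, with $|H|\asymp q^2\log q$ when $n=3$, then produces a contribution of order $(\log q)^2$, which diverges---so the estimate does not even give $\mathcal{Q}(G,2)<1$ for large square $q$. (For $n\geqs 5$ the exponents work out and the blanket bound is adequate; the break is confined to $n=3$, which is precisely the range you cannot hand off wholesale to {\sc Magma}.)

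To repair this you need, as the paper does, a genuinely tighter count of the offending elements rather than bookkeeping. The number of involutory field or graph-field automorphisms in $H$ is at most
\[
\frac{q^{n/2}-1}{q^{1/2}-1} + \frac{q^{n/2}+1}{q^{1/2}+1} < 2\left(\frac{q^{n/2}-1}{q^{1/2}-1}\right),
\]
of order $q^{(n-1)/2}$---roughly the square root of $|H_0|$---obtained by analysing the two subfield subtori of the Singer cycle fixed by a field automorphism (cf.\ \eqref{e:phi}). A similar structural bound is needed for involutory graph automorphisms: the paper exploits the fact that a graph automorphism inverts the normal torus $C_{(q^2+\e q+1)/d}$ of $H_0$, which has odd order, to conclude that there are at most $q^2+q+1$ such involutions in $H$ when $n=3$. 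These counts are what turn the divergent $(\log q)^2$ into $O(q^{-(n-1)^2/2})$; without them the remark ``verifying that their contributions do not dominate'' is not a deferrable detail but the essential content of the $n=3$ argument.
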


\begin{proof}
Here $H_0 = C_{m}{:}C_{n}$, where $m = (q^n-\e)/d(q-\e)$ and $d = (n,q-\e)$ (see \cite[Proposition 4.3.6]{KL}). Let $V$ be the natural module for $G_0$ and let $x \in H$ be an element of prime order $r$.

If $x$ is unipotent, then $r=p=n$ and $x$ has Jordan form $[J_p]$ on $V$. Similarly, if $x$ is semisimple then it embeds in $G$ as a regular element (see \cite[Lemma 5.3.2]{BG}, for example). Therefore, 
\[
|x^G|>\frac{1}{2n}\left(\frac{q}{q+1}\right)^nq^{n^2-n} = b_1
\]
for all unipotent and semisimple elements in $H$ and we note that 
\[
|H \cap {\rm PGL}_{n}^{\e}(q)| \leqs n\left(\frac{q^n-1}{q-1}\right) = a_1.
\] 

Now assume $x$ is either a field automorphism of odd prime order or an involutory graph automorphism of $G_0$. Then $|x^G|>\frac{1}{2n}q^{n^2/2+n/2-1} = b_2$ and we observe that
\[
|H| \leqs 2n\left(\frac{q^n-1}{q-1}\right)\log q = a_2.
\]
Finally, suppose $\e=+$, $q=q_0^2$ and $x$ is an involutory field or graph-field automorphism. Here $|x^G|>\frac{1}{2n}q^{(n^2-1)/2} = b_3$ and $H$ contains at most 
\begin{equation}\label{e:phi}
\frac{q^{n/2}-1}{q^{1/2}-1} + \frac{q^{n/2}+1}{q^{1/2}+1} < 2\left(\frac{q^{n/2}-1}{q^{1/2}-1}\right)= a_3
\end{equation}
of these elements.  

In view of the above bounds, we conclude that $\mathcal{Q}(G,2) < \sum_{i=1}^{3}a_i^2/b_i$. For $n \geqs 5$, one checks that this upper bound is less than $q^{-n/2}$ unless $(n,q) = (7,2)$, or $n=5$ and $q \leqs 16$. Moreover, it is  less than $1$ unless $n=5$ and $q \leqs 4$; these remaining cases can be checked using {\sc Magma}.

To complete the proof, we may assume $n=3$. We can use {\sc Magma} to verify the result for $q \leqs 19$, so let us assume $q>19$. Let $x \in H$ be an element of prime order $r$. If $x$ is semisimple or unipotent, then 
\[
|x^G|\geqs \frac{1}{3}q^3(q-1)(q^2-q+1)=b_1
\]
and we note that $|H \cap {\rm PGL}_{3}^{\e}(q)| \leqs 3(q^2+q+1) = a_1$. Next assume $x$ is a field automorphism and $r$ is odd. Here $r \geqs 5$ (since every element in $H$ of order $3$ is contained in ${\rm PGL}_{3}^{\e}(q)$), so $|x^G|>\frac{1}{6}q^{32/5} = b_2$ and there are at most $a_2=3(q^2+q+1)\log q$ of these elements in $H$. If $x$ is an involutory graph automorphism, then $|x^G| \geqs \frac{1}{3}q^2(q^3-1) = b_3$ and $x$ inverts the normal subgroup $C_{(q^2+\e q+1)/d}$ of $H_0$. Since this torus has odd order, we deduce that $H$ contains  at most $a_3=q^2+q+1$ involutory graph automorphisms. Finally, suppose $\e=+$ and $x$ is an involutory field or graph-field automorphism. Here $q=q_0^2$, 
\[
|x^G| \geqs \frac{1}{3}q^{3/2}(q+1)(q^{3/2}-1) = b_4
\]
and as noted above (see \eqref{e:phi}) there are fewer than $a_4=2(q+q^{1/2}+1)$ of these elements in $H$. 

We conclude that if $n=3$ then $\mathcal{Q}(G,2) < \sum_{i=1}^{4}a_i^2/b_i$. It is routine to check that this upper bound is less than $1$ if $q > 19$, and it is less than $q^{-1/2}$ if $q > 73$. The result follows.
\end{proof}

\begin{lem}\label{l:c2_1}
Suppose $G_0 = {\rm L}_{3}^{\e}(q)$ and $H$ is of type ${\rm GL}_{1}^{\e}(q) \wr S_3$. Then $b(G,H) \leqs 3$, with equality if and only if $G_0 = {\rm U}_{3}(3)$, or if $G_0 = {\rm U}_{3}(4)$ and $G \ne G_0$. In addition, $\mathcal{P}(G,2) \to 1$ as $q \to \infty$.
\end{lem}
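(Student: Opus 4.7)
The plan follows the template established in Lemma \ref{l:c3}. First I record the structure: $H_0 = [(q-\e)^2/d].S_3$ with $d = (3,q-\e)$, so $|H_0| = 6(q-\e)^2/d$ and $|\O|$ is of order $q^6/6$. In particular $\log|G|/\log|\O| < 2$, so there is no arithmetic obstruction to having $b(G,H)=2$. The small cases, say $q \leqs q_0$ for some moderate $q_0$, are handled directly in \textsc{Magma} using \texttt{AutomorphismGroupSimpleGroup} and \texttt{MaximalSubgroups(G:IsSolvable:=true)} as described in Section \ref{ss:comp}. Random search readily yields an element $g \in G$ with $H \cap H^g = 1$ in all but the asserted exceptional cases; for $G_0 = {\rm U}_3(3)$ and for $G_0 = {\rm U}_3(4)$ with $G \ne G_0$, one instead computes the full orbit structure of $H$ on $\O$ to show every $2$-point stabiliser is nontrivial, while producing a base of size $3$ by random search.

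For large $q$, I would bound $\mathcal{Q}(G,2)$ via Lemma \ref{l:calc} by partitioning the prime-order elements of $H$ by $G$-class. The types to consider are: (a) semisimple elements in the diagonal torus $C_{(q-\e)/d}^2 \leqs H_0$, for which $|x^G|$ is generically of order $q^3(q-\e)$ and $|x^G \cap H| = O(q)$; (b) involutions from the $S_3$-Weyl action, a product of a transposition with a diagonal element, lying in a torus of type $C_{(q^2-1)/d}$ when $q$ is odd; (c) elements of order $3$ coming from the $3$-cycle of $S_3$, whose powers lie in the non-split torus of order $(q^2+\e q+1)/d$ and satisfy $|x^G| \geqs q^3(q^2-1)$; (d) field automorphisms of odd prime order $r \mid f$, contributing $(r-1)$ classes with $|x^G| > \tfrac{1}{6}q^{8(1-1/r)}$ and $|x^G \cap H \cdot x| = O(q_0^2)$; (e) involutory graph automorphisms when $\e=+$, for which $|x^G| \geqs \tfrac{1}{3}q^2(q^3-1)$ and $H$ contains at most $(q-1)^2$ of them; and (f) involutory graph-field automorphisms when $\e=+$ and $q=q_0^2$, with $|x^G| \geqs \tfrac{1}{3}q^{3/2}(q+1)(q^{3/2}-1)$ and $a_i = O(q)$ by the count \eqref{e:phi}. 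Summing $a_i^2/b_i$ as in Lemma \ref{l:c3}, the dominant contribution is $O(q^{-1})$, so $\mathcal{Q}(G,2) < 1$ for all $q$ beyond an explicit threshold and $\mathcal{Q}(G,2) \to 0$ as $q \to \infty$, which simultaneously gives $b(G,H) \leqs 2$ and $\mathcal{P}(G,2) \to 1$.

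To prove $b(G,H) \leqs 3$ in every case (including the three exceptions), I would argue directly rather than probabilistically: viewing $H$ as the stabiliser of an unordered frame $\{\la e_1\ra, \la e_2\ra, \la e_3\ra\}$ in the natural module $V$, the triple consisting of two generic frames together with a suitable third one admits only the identity as a common stabiliser, and an explicit check using the $\mu$-generator of $\mathbb{F}_q^{\times}$ (analogous to the argument in Lemmas \ref{l:l2c2} and \ref{l:l2c3}) produces a base of size $3$ for arbitrary $G$ of this form.

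The main obstacle is the clean identification of the two exceptional cases $G_0 = {\rm U}_3(3)$ (for every admissible $G$) and $G_0 = {\rm U}_3(4)$ with $G \ne G_0$. The probabilistic bound is unlikely to separate these from the nearby cases ${\rm L}_3(3)$, ${\rm L}_3(4)$, ${\rm L}_3(5)$, ${\rm U}_3(5)$, etc., so the distinction must be made by direct \textsc{Magma} computation of $H$-suborbits on $\O$ — showing that $H$ has (or lacks) a regular orbit. Care is also required to track the diagonal and outer automorphisms in the unitary case, since whether $b(G,H)$ equals $2$ or $3$ depends on $|G:G_0|$ when $G_0 = {\rm U}_3(4)$, which must be resolved by examining each coset of $G_0$ in $\mathrm{Aut}(G_0)$ separately.
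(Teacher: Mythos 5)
Your overall strategy matches the paper's: handle $q$ up to a moderate bound in \textsc{Magma} (the paper uses $q \leqs 27$), then bound $\mathcal{Q}(G,2)$ via Lemma \ref{l:calc} for larger $q$, with the exceptional cases ${\rm U}_3(3)$ and ${\rm U}_3(4)$ separated out by direct computation of suborbits. The taxonomy of prime-order classes in your items (a)--(f) is essentially the one the paper uses, though two points deserve flagging. First, you restrict involutory graph automorphisms to $\e = +$, but in the unitary case $\e = -$ there are also involutory "graph" automorphisms of $G_0$ (those inducing $\eta \mapsto \eta^q$ on $\mathbb{F}_{q^2}$); they invert the torus $C_{(q+1)/d}^2$ and must be counted in $\mathcal{Q}(G,2)$ — the paper treats both signs uniformly and allows up to $q^2+5q+4$ such elements. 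Your estimate of $(q-1)^2$ for $\e = +$ also undercounts: the coset $C_{q-\e}^2\tau$ does contribute $(q-\e)^2$ involutions (since $\tau$ inverts the torus), but the $S_3$-twisted cosets contribute further involutions that you must include to make the argument rigorous, as the paper's bound reflects.

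Second, your proposed explicit constructive argument that $b(G,H) \leqs 3$ "in every case" is unnecessary once the above is in place: for $q$ beyond the threshold $\mathcal{Q}(G,2) < 1$ already gives $b(G,H) \leqs 2$, and the small cases (including the exceptions) are exhausted by the \textsc{Magma} check, so a direct base-of-size-$3$ construction adds nothing. It is not wrong, just superfluous — and in fact the paper never does it. With the graph automorphism count corrected for $\e = -$ and the constant-tracking done carefully, your plan recovers the paper's proof.
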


\begin{proof}
Write $q=p^f$ and set $d=(3,q-\e)$. Here $H_0 = [(q-\e)^2/d].S_3$ is the stabiliser in $G_0$ of a direct sum decomposition
$V = V_1 \oplus V_2 \oplus V_3$
of the natural module into $1$-spaces (more precisely, this is an orthogonal decomposition into non-degenerate $1$-spaces when $\e=-$). As noted in \cite[Table 8.3]{BHR}, if $\e=+$ then $q \geqs 5$ (otherwise $H$ is non-maximal). The cases with $q \leqs 27$ can be checked using {\sc Magma}, so we will assume $q>27$. 

Let $x \in H$ be an element of prime order $r$. First assume $x$ is unipotent, so $r =p \in \{2,3\}$. If $r=3$ then $x$ acts transitively on the $V_i$, whence $x$ has Jordan form $[J_3]$ on $V$ and $|x^G| \geqs q(q^2-1)(q^3-1)=b_1$. Moreover, there are at most $a_1=2(q+1)^2$ of these elements in $H$. Similarly, if $r=2$ then $x$ acts as a transposition on the $V_i$ and it has Jordan form $[J_2,J_1]$ on $V$. Therefore, $|x^G| \geqs (q^2-1)(q^2-q+1) = b_2$ and $H$ contains at most $a_2=3(q+1)$ such elements. 

Next assume $x$ is semisimple. If $r=2$ then $q$ is odd, $|x^G| \geqs q^2(q^2-q+1) = b_3$ and we note that there is a unique class of involutions in ${\rm PGL}_{3}^{\e}(q)$. Since 
\[
i_2(H \cap {\rm PGL}_{3}^{\e}(q)) \leqs i_2(C_{q-\e}^2) + 3(q-\e) = 3(q+1-\e),
\]
it follows that $H$ contains at most $a_3=3(q+2)$ semisimple involutions. Now assume $r=3$. If $q \not\equiv \e \imod{3}$, then $x$ cyclically permutes the $V_i$, so $|x^G| \geqs q^3(q^3-1) = b_4$ and there are at most $a_4=2(q+1)^2$ of these elements in $H$. Suppose now that $r=3$ and $q \equiv \e \imod{3}$. If $x$ is not regular, then $|x^G| \geqs q^2(q^2-q+1) = b_5$ and $x$ fixes each $V_i$, so there are at most $i_3(C_{q-\e}^2) = 8 = a_5$ such elements in $H$. On the other hand, if $x$ is regular, then $|x^G| \geqs \frac{1}{3}q^3(q-1)(q^2-q+1) = b_6$ and we note that $i_3(H \cap {\rm PGL}_{3}^{\e}(q)) \leqs 8+2(q+1)^2 = a_6$.  

To complete the analysis of semisimple elements, let us assume $x$ has order $r \geqs 5$, so $r$ is a divisor of $q-\e$ and $x$ fixes each $V_i$. Let $\pi$ be the set of such primes. First assume $x$ is regular. Then up to conjugacy, $x$ is the image of an element in ${\rm SL}_{3}^{\e}(q)$ of the form $[1,\omega,\omega^{-1}]$, where $\omega \in \mathbb{F}_{q^u}$ is a primitive $r$-th root of unity (here $u=1$ if $\e=+$, otherwise $u=2$). Now $|x^G| \geqs q^3(q-1)(q^2-q+1)$ and we calculate that $|x^{G_0} \cap H| \leqs 6$. Since there are $(r-1)/2$ distinct $G_0$-classes of this form, it follows that the contribution to $\mathcal{Q}(G,2)$ from these elements is at most
\[
\sum_{r \in \pi} \frac{1}{2}(r-1) \cdot \frac{36}{q^3(q-1)(q^2-q+1)} < \frac{18\log q}{q^2(q-1)(q^2-q+1)}.
\]
Here we are using the fact that $|\pi|< \log q$ and $r \leqs q+1$ for all $r \in \pi$. Similarly, if $x$ is non-regular then $|x^G| \geqs q^2(q^2-q+1)$, $|x^{G_0} \cap H| \leqs 3$ and the contribution from these elements is less than
\[
\sum_{r \in \pi} (r-1) \cdot \frac{9}{q^2(q^2-q+1)} < \frac{9\log q}{q(q^2-q+1)}.
\]
It follows that the combined contribution to $\mathcal{Q}(G,2)$ from semisimple elements of order at least $5$ is less than $2q^{-2}$.

Next assume $x$ is a field automorphism of order $r \geqs 5$. Here $q \geqs 32$, $|x^G|>\frac{1}{6}q^{32/5} = b_7$ and there are at most
\[
\sum_{r \in \pi'}(r-1) \cdot (q-\e)^2 < (q+1)^2\log q = a_7
\]
such elements in $H$, where $\pi'$ is the set of prime divisors $r \geqs 5$ of $f = \log_pq$. 

Now suppose $x$ is a field automorphism of order $3$, so $q=q_0^3$ and 
\[
|x^G| \geqs \frac{1}{3}q^2(q^{4/3}+q^{2/3}+1)(q^2-q+1) = b_8.
\]
A straightforward calculation shows that there are at most
\[
a_8 = 4(q+1)(q^{1/3}+1) + 2(q^{2/3}+q^{1/3}+1)^2
\]
such elements in $H$. For example, suppose $\e=+$ and consider the coset $C_{q-1}^2\rho x$, where $\rho = (1,2,3) \in S_3$. We may identify $C_{q-1}^2$ with the subgroup $\{(a,b,a^{-1}b^{-1}) \,:\, a,b \in \mathbb{F}_{q}^{\times}\}< C_{q-1}^3$ and we may assume that the action of $x$ on $C_{q-1}^2$ is given by $(a,b,a^{-1}b^{-1})^x = (a^{q_0},b^{q_0},a^{-q_0}b^{-q_0})$. If $z = (a,b,a^{-1}b^{-1})\rho x$, then
\[
z^3 = (a^{1-q_0}b^{q_0(q_0-1)}, a^{q_0(1-q_0)}b^{1-q_0^2},a^{q_0^2-1}b^{q_0-1})
\]
and we deduce that $i_3(C_{q-1}^2\rho x) = (q-1)(q_0-1)$. Similarly, there are $(q-1)(q_0-1)$ elements of order $3$ in each of the cosets of $C_{q-1}^2$ containing $(1,2,3)x^2$, $(1,3,2)x$ and $(1,3,2)x^2$, and we calculate that there are $(q^{2/3}+q^{1/3}+1)^2$ elements of order $3$ in both $C_{q-1}^2x$ and $C_{q-1}^2x^2$. It follows that if $\e=+$ then $H$ contains at most
\[
4(q-1)(q^{1/3}-1) + 2(q^{2/3}+q^{1/3}+1)^2 < a_8
\]
field automorphisms of order $3$. A similar argument applies when $\e=-$. 

Now assume $\e=+$, $q=q_0^2$ and $x$ is an involutory field or graph-field automorphism. Here 
\[
|x^G| \geqs \frac{1}{3}q^{3/2}(q+1)(q^{3/2}-1) = b_9
\]
and by counting the number of involutions in each relevant coset of $C_{q-1}^2$ (noting that an involutory graph automorphism inverts $C_{q-1}^2$), we deduce that $H$ contains at most 
\[
a_9 = (q^{1/2}+1)^2+(q^{1/2}-1)^2+6(q-1) = 8q-4
\]
of these elements. Finally, if $x$ is an involutory graph automorphism, then $|x^G| \geqs \frac{1}{3}q^2(q^3-1) = b_{10}$ and one can check that there are at most $a_{10} = (q+1)^2+3(q+1) = q^2+5q+4$ such elements in $H$.

We conclude that if $q>27$ then
\[
\mathcal{Q}(G,2)< 2q^{-2} + \sum_{i=1}^{10}a_i^2/b_i < 1
\]
and thus $b(G,H) = 2$. Moreover, this upper bound is less than $q^{-1/2}$ if $q >89$. 
\end{proof}

\begin{lem}\label{l:c2_2}
Suppose $G_0 = {\rm L}_{4}^{\e}(q)$ and $H$ is of type ${\rm GL}_{1}^{\e}(q) \wr S_4$. Then 
\[
b(G,H) = \left\{\begin{array}{ll}
4 & \mbox{if $G_0 = {\rm U}_{4}(2)$} \\
3 & \mbox{if $G = {\rm U}_{4}(3).D_8$ } \\
2 & \mbox{otherwise}
\end{array}\right.
\]
and $\mathcal{P}(G,2) \to 1$ as $q \to \infty$.
\end{lem}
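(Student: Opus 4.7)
The plan is to follow the template of Lemma~\ref{l:c2_1}, splitting the argument into a computational treatment of small $q$ and a probabilistic estimate of $\mathcal{Q}(G,2)$ for large $q$. Identifying $H$ as the stabiliser in $G$ of a decomposition $V = V_1 \oplus V_2 \oplus V_3 \oplus V_4$ of the natural module (non-degenerate in the unitary case), we have $H_0 = [(q-\e)^3/d].S_4$ with $d = (4,q-\e)$. The two exceptional base sizes listed in the statement will be verified in \textsc{Magma} by constructing $G$ and $H$ via the techniques of Section~\ref{ss:comp}, together with all $q$ up to a threshold where the probabilistic bound becomes effective. Random search will supply the upper bounds, and for the lower bounds in the exceptional cases we would use \texttt{CosetAction} and inspect stabiliser chains to compute the orders of $k$-point stabilisers.

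For $q$ above the threshold, I would estimate $\mathcal{Q}(G,2)$ by a case analysis organised according to the action of $x$ on the set $\{V_1,\ldots,V_4\}$. For inner automorphisms the elements to consider are: unipotent involutions in characteristic $2$, with Jordan form $[J_2,J_1^2]$ or $[J_2^2]$ according as $x$ induces a transposition or a double transposition on the $V_i$; unipotent elements of order $3$ in characteristic $3$, acting as a $3$-cycle and having Jordan form $[J_3,J_1]$; semisimple involutions in odd characteristic, split by action on the $V_i$; semisimple elements of order $3$, split according to whether $q \equiv \e \imod{3}$ or not; and semisimple elements of order $r \geqs 5$, which must fix each $V_i$, with the total contribution bounded by summing over primes $r$ dividing $q-\e$ and using $|\pi| < \log q$ as in the $n=3$ proof. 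The outer contributions comprise field automorphisms of odd prime order, involutory field and graph-field automorphisms (when $\e=+$ and $q$ is a square), and involutory graph automorphisms, each bounded by standard centraliser estimates together with a count of elements of the prescribed order in each coset of $C_{q-\e}^3$ in $H$. In every case one records bounds $|x^G \cap H| \leqs a_i$ and $|x^G| \geqs b_i$ and aims to show $\sum a_i^2/b_i < 1$ above the computational threshold, with $\sum a_i^2/b_i = O(q^{-1/2})$ securing the asymptotic claim $\mathcal{P}(G,2) \to 1$.

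The main obstacle will be the two exceptional small cases. For $G = {\rm U}_{4}(3).D_8$ the claim $b(G,H) = 3$ requires showing both that every $2$-point stabiliser is nontrivial and that some $3$-point stabiliser is trivial; for $G_0 = {\rm U}_{4}(2)$ the claim $b(G,H) = 4$ requires that every $3$-point stabiliser is nontrivial and that some $4$-point stabiliser is trivial. Since $|\O|$ is at most a few thousand in each case, the direct \texttt{CosetAction} approach of Section~\ref{ss:comp} is tractable, though one must inspect the orbits of successive $k$-point stabilisers rather than attempt a naive random search. A secondary technical issue is the $[J_2^2]$ class of unipotent involutions in characteristic $2$: the number of such elements in $H$ is of order $q^3$ while $|x^G|$ is only polynomial in $q$, so the corresponding contribution to $\mathcal{Q}(G,2)$ must be estimated with a sharp formula for the centraliser of $[J_2^2]$ inside ${\rm GL}_{4}^{\e}(q)$ in order to comfortably absorb this term for all $q$ in the probabilistic range.
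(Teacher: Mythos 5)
Your proposal follows the paper's proof essentially verbatim: {\sc Magma} for small $q$ (the paper uses $q \leqs 8$), then a bound on $\mathcal{Q}(G,2)$ for large $q$ organised by the action of prime-order elements on the four summands, with the same contribution-by-contribution case analysis. One small correction: your worry about the $[J_2^2]$ unipotent class in characteristic $2$ is unfounded, since $q-\e$ is odd in that case, so unipotent involutions in $H$ cannot fix every $V_i$ and must induce a (double) transposition; the $[J_2^2]$ elements therefore number at most $3(q-\e)^2 = O(q^2)$ in $H$, while $|x^G| > \tfrac{1}{4}q^8$, so the crude bound already contributes only $O(q^{-4})$ and no sharp centraliser formula is needed.
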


\begin{proof}
Set $q=p^f$ and $d = (4,q-\e)$. Here $H_0 = [(q-\e)^3/d].S_4$ is the stabiliser of an appropriate  direct sum decomposition $V = V_1 \oplus V_2 \oplus V_3 \oplus V_4$
of the natural module for $G_0$. As noted in \cite[Table 8.8]{BHR}, the maximality of $H$ implies that $q \geqs 5$ when $\e=+$. The cases with $q \leqs 8$ can be checked using {\sc Magma} so we will assume $q>8$ for the remainder of the proof.

Let $x \in H$ be an element of prime order $r$. First assume $x$ is unipotent, so $r=p \in \{2,3\}$. Suppose $p=2$. If $x$ has Jordan form $[J_2,J_1^2]$ on $V$, then $|x^G| \geqs (q^3+1)(q^2+1)(q-1) = b_1$ and we note that $x$ acts as a transposition on $\{V_1, \ldots, V_4\}$, whence 
\[
|x^G \cap H| \leqs \binom{4}{2}(q-\e) \leqs 6(q+1) =  a_1.
\]
Similarly, if $x = [J_2^2]$ then $|x^G|>\frac{1}{4}q^8=b_2$ and there are at most $a_2=3(q+1)^2$ of these elements in $H$ (here $x$ induces a double transposition on the $V_i$). Now assume $p=3$. Here $x = [J_3,J_1]$ is the only possibility and we have $|x^G|>\frac{1}{2}(q+1)^{-1}q^{11} = b_3$ and $|x^G \cap H| \leqs 8(q+1)^2 = a_3$. 

Next assume $x$ is a semisimple involution, so $q$ is odd. There are three conjugacy classes of involutions in ${\rm PGL}_{4}^{\e}(q)$, labelled $t_1$, $t_2$ and $t_2'$ in \cite[Sections 3.2.2 and 3.3.2]{BG}. First assume $x$ is of type $t_1$, so up to conjugacy $x$ is the image of an element in ${\rm GL}_{4}^{\e}(q)$ of the form $[-I_{1},I_3]$. Now $|x^G| \geqs q^3(q^2+1)(q-1)=b_4$ and we calculate that there are at most $a_4 = \binom{4}{1}+\binom{4}{2}(q+1) = 6q+10$ of these elements in $H$. Similarly, if $x$ is of type $t_2$ then $|x^G| \geqs \frac{1}{2}q^4(q-1)(q^3-1) = b_5$ and there are at most 
\[
a_5 = \binom{4}{2}+2\binom{4}{2}(q+1)+3(q+1)^2 = 3q^2+18q+21
\]
such elements in $H$. Finally, if $x$ is of type $t_2'$ then $|x^G| \geqs b_6 = b_5$ and $x$ induces a double transposition on the $V_i$, whence $H$ contains at most $a_6 = 3(q+1)^2$ of these involutions.

Now let us turn to the contribution to $\mathcal{Q}(G,2)$ from semisimple elements of odd prime order $r$. First assume $r=3$ and $q \equiv -\e \imod{3}$, so $x$ must induce a $3$-cycle on the $V_i$. Then up to conjugacy, $x$ is the image of a matrix of the form 
$[I_2,\omega, \omega^2] \in {\rm SL}_{4}(k)$, where $k = \bar{\mathbb{F}}_{q}$ and $\omega \in k$ is a primitive cube root of unity, and we obtain the bounds $|x^G| >\frac{1}{2}q^{10} = b_7$ and $|x^G \cap H| \leqs 8(q+1)^2= a_7$. Now assume $q \equiv \e \imod{3}$. Here there are four $G_0$-classes of elements of order $3$. If $x$ is of type $[I_3, \omega]$ or $[I_3, \omega^2]$ then $|x^G| \geqs q^3(q^2+1)(q-1) = b_8$ and in total there are at most $a_8 = 2\binom{4}{1}=8$ of these elements in $H$. Similarly, if $x = [ I_2, \omega I_2]$ then $|x^G \cap H| \leqs 6 = a_9$ and $|x^G| \geqs q^4(q^2+1)(q^2-q+1) = b_9$. Finally, if $x = [I_2, \omega,\omega^2]$ then $|x^G| >\frac{1}{2}(q+1)^{-2}q^{12} = b_{10}$ and $H$ contains at most $a_{10} = 2\binom{4}{2}+8(q+1)^2 = 8q^2+16q+20$ of these elements.

Now assume $r \geqs 5$, so $r$ divides $q-\e$ and $x$ fixes each $V_i$. If $x$ is of the form $[I_3,\omega]$ then $|x^{G_0} \cap H| = 4 = a_{11}$ and $|x^{G_0}| \geqs q^3(q^2+1)(q-1)=b_{11}$. Similarly, if $x = [I_2, \omega I_2]$ then $|x^{G_0} \cap H| = 6 = a_{12}$ and $|x^{G_0}| \geqs q^4(q^2-q+1)(q^2+1) = b_{12}$. There are $r-1$ distinct $G_0$-classes of elements of each type. If $x$ is any other element of order $r$, then $|x^G|>\frac{1}{2}(q+1)^{-2}q^{12} = b_{13}$ and we note that there are less than $a_{13} = (q+1)^3$ semisimple elements in $H$ of order at least $5$. Therefore, the combined contribution to $\mathcal{Q}(G,2)$ from semisimple elements of order at least $5$ is less than
\[
\a = a_{13}^2/b_{13} +\sum_{r \in \pi}(r-1)\cdot \left(a_{11}^2/b_{11} + a_{12}^2/b_{12}\right),
\]
where $\pi$ is the set of primes $r \geqs 5$ dividing $q-\e$. Since $r \leqs q+1$ and $|\pi|<\log q$, we deduce that
\[
\a < a_{13}^2/b_{13} + q\left(a_{11}^2/b_{11} + a_{12}^2/b_{12}\right)\log q < 2q^{-3}
\]
for all $q \geqs 9$.

To complete the proof, we may assume $x$ is a field, graph or graph-field automorphism. First assume $q=q_0^r$ and $x$ is a field automorphism of order $r$. If $r \geqs 3$ then $q \geqs 27$ (recall that we are assuming $q \geqs 9$), $|x^G|>\frac{1}{8}q^{10} = b_{14}$ and we observe that there are fewer than $a_{14} = 24(q+1)^3\log q$ of these elements in $H$. Now assume $r=2$, so $q=q_0^2$, $\e=+$ and 
\[
|x^G| \geqs \frac{1}{4}q^3(q+1)(q^{3/2}+1)(q^2+1) = b_{15}.
\]
By carefully counting the number of involutions in the relevant cosets of $C_{q-1}^3$, we deduce that $H$ contains at most 
\[
a_{15} = (q^{1/2}+1)^3 + \binom{4}{2}(q-1)(q^{1/2}+1) + 3(q-1)(q^{1/2}-1)
\]
involutory field automorphisms. For example, if $z \in C_{q-1}^3\rho x$, say 
\[
z = (a,b,c,a^{-1}b^{-1}c^{-1})\rho x,
\]
where $\rho = (1,2)(3,4) \in S_4$ and $a,b,c \in \mathbb{F}_{q}^{\times}$, then
\[
z^2 = (ab^{q_0},ba^{q_0}, a^{-q_0}b^{-q_0}c^{1-q_0},a^{-1}b^{-1}c^{q_0-1})
\]
and thus $z^2 = 1$ if and only if $b = a^{-q_0}$ and $c = \l a^{-1}$ with $\l^{q_0-1}=1$. Therefore, each coset of the form $C_{q-1}^3\rho x$, where $\rho \in S_4$ is a double transposition, contains $(q-1)(q^{1/2}-1)$ involutions.

Similarly, if $x$ is an involutory graph-field automorphism then 
\[
|x^G| \geqs \frac{1}{4}q^3(q+1)(q^{3/2}-1)(q^2+1) = b_{16}
\]
and there are at most 
\[
a_{16} = (q^{1/2}+1)^3 + \binom{4}{2}(q-1)(q^{1/2}-1) + 3(q-1)(q^{1/2}+1)
\]
of these elements in $H$.

Finally, let us assume $x\in G$ is an involutory graph automorphism. Let $\tau$ be the inverse-transpose graph automorphism of $G_0$ (note that if $\e=-$, then this is induced by the order two field automorphism of $\mathbb{F}_{q^2}$). As explained in \cite[Sections 3.2.5 and 3.3.5]{BG}, we have $C_{G_0}(\tau) = {\rm PSO}_{4}^{\e'}(q).2$ if $q$ is odd (for some choice of sign $\e'$) and 
$C_{G_0}(\tau) = C_{{\rm Sp}_{4}(q)}(t)$ if $q$ is even, where $t \in {\rm Sp}_{4}(q)$ is a transvection. If $x \in H$ is an involutory graph automorphism with $C_{G_0}(x)' = {\rm PSp}_{4}(q)$, then $|x^G| \geqs \frac{1}{2}q^2(q^3-1) = b_{17}$ and we observe that $x$ is contained in a coset of the form $C_{q-\e}^3\rho\tau$, where $\rho \in S_4$ is a double transposition. Now $\tau$ inverts the torus $C_{q-\e}^3$ and we calculate that there are at most $2(q-\e)$ involutions in each of these cosets and so in total there are at most $a_{17} = 6(q+1)$ of these graph automorphisms in $H$. On the other hand, if $C_{G_0}(x)' \ne {\rm PSp}_{4}(q)$, then $|x^G| \geqs \frac{1}{2}q^4(q^2-1)(q^3-1)=b_{18}$ and by counting the involutions in the cosets $C_{q-\e}^3\tau$ and $C_{q-\e}^3\rho\tau$, where $\rho \in S_4$ is a transposition, we deduce that $H$ contains at most $a_{18} = (q+1)^3 + \binom{4}{2}(q+1)^2$ of these graph automorphisms.

If we now bring together the above bounds, we deduce that if $q \geqs 9$ then 
\[
\mathcal{Q}(G,2) < 2q^{-3} + \sum_{i=1}^{10}a_i^2/b_i + \eta a_{14}^2/b_{14} + \sum_{i=15}^{18}a_i^2/b_i,
\]
where $\eta = 1$ if $q=q_0^r$ with $r \geqs 3$, otherwise $\eta = 0$. One can check that this upper bound is less than $1$ for all $q \geqs 9$. In addition, it is less than $q^{-1/2}$ if $q \geqs 29$.
\end{proof}

\begin{lem}\label{l:c2_3}
Suppose $G_0 = {\rm P\O}_{8}^{+}(q)$ and $H$ is of type ${\rm O}_{2}^{\e}(q) \wr S_4$. Then 
\[
b(G,H)= \left\{\begin{array}{ll}
3 & \mbox{if $(\e,q) = (-,2)$} \\
2 & \mbox{otherwise}
\end{array}\right.
\]
and $\mathcal{P}(G,2) \to 1$ as $q \to \infty$.
\end{lem}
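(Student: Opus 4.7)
The plan is to follow the template of Lemmas \ref{l:c2_1} and \ref{l:c2_2}. By the maximality condition recorded in Table \ref{tab:nonpar}, we may assume $q \geqs 5$ when $\e = +$. First, I would dispatch the exceptional case $(\e,q) = (-,2)$ and the remaining small-$q$ cases in {\sc Magma}: for $(-,2)$, the group is small enough to enumerate all suborbits of $H$ on $\O$ and verify that none is regular (so $b(G,H) \geqs 3$), after which random search produces a base of size $3$; for the other bounded range (say $q \leqs 7$), random search certifies $b(G,H) = 2$.

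For $q$ above this threshold, I would show $\mathcal{Q}(G,2) < 1$ and $\mathcal{Q}(G,2) \to 0$, which simultaneously delivers $b(G,H) = 2$ and $\mathcal{P}(G,2) \to 1$. Identify $H$ as the stabiliser of an orthogonal decomposition $V = V_1 \perp V_2 \perp V_3 \perp V_4$ into non-degenerate $2$-spaces of type $\e$, so modulo scalars $H_0 \leqs {\rm O}_{2}^{\e}(q)^4.S_4$. I would stratify the prime-order elements $x \in H$ into inner unipotent elements (only $r = p$, with Jordan type determined by the induced permutation on the $V_i$ combined with the action on each fixed block); inner semisimple involutions when $q$ is odd (images of $[-I_k, I_{8-k}]$ with $1 \leqs k \leqs 4$); other inner semisimple elements, which necessarily fix each $V_i$ and lie in a torus of shape $C_{(q-\e)/d}^4$; field automorphisms arising from $q = q_0^r$; involutory graph and graph-field automorphisms with $C_{G_0}(x)$ of type ${\rm PSp}_{6}(q)$, ${\rm P\O}_{7}(q)$ or ${\rm P\O}_{8}^{\pm}(q^{1/2}).2$; and any order-$3$ triality automorphism contained in $H$, whose centraliser is $G_2(q)$ or ${}^3D_4(q^{1/3})$, giving $|x^G| \geqs q^{14}$ and hence a negligible contribution. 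For each stratum I would bound $|x^G \cap H|$ via the wreath-product block structure of $H_0$ (counting $r$-elements coset-by-coset in the base group, following Lemma \ref{l:c2_2}), and $|x^G|$ from below using the centraliser data in \cite[Section 3.5]{BG}. Summing with $|\pi| < \log q$ for semisimple primes and $|\pi'| < \log \log q$ for field-automorphism primes, I expect an overall bound of the form $\mathcal{Q}(G,2) = O(q^{-1} \log \log q)$.

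The main obstacle will be the bookkeeping for the inner semisimple involutions $[-I_k, I_{8-k}]$ with $k \in \{1,2\}$: these have the smallest class sizes (roughly $q^7$ and $q^{12}$) and can arise in several distinct ways in $H$ — for instance, a double transposition $\rho \in S_4$ combined with a suitable inner involution on the two fixed blocks realises such an element — so the count $|x^G \cap H|$ requires a careful coset enumeration. The counts also depend on $\e$, since ${\rm O}_{2}^{+}(q) = D_{2(q-1)}$ and ${\rm O}_{2}^{-}(q) = D_{2(q+1)}$ contain different numbers of involutions and have different maximal tori. Once these subcases are handled (analogously to the graph-involution count in Lemma \ref{l:c2_2}), the remaining unipotent, higher-order semisimple and outer contributions are bounded routinely, and $\mathcal{Q}(G,2) < 1$ follows above the threshold covered by the {\sc Magma} check.
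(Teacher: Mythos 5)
Your approach — a {\sc Magma} check for small $q$, then an explicit bound $\mathcal{Q}(G,2) < 1$ for larger $q$ by stratifying the prime-order elements of $H$ by type and using the orthogonal block structure to count $|x^G \cap H|$ — is the same as the paper's, and your proposed strata and asymptotic shape of the bound are essentially right. Two corrections to the bookkeeping are needed before the plan is watertight.

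First, the claim that "other inner semisimple elements necessarily fix each $V_i$ and lie in a torus of shape $C_{(q-\e)/d}^4$" is false for $r=3$: an inner semisimple element of order $3$ can induce a $3$-cycle on the blocks. Indeed, when $p \ne 3$ and $q \not\equiv \e \pmod 3$ this is \emph{forced}, since $3 \nmid |{\rm O}_2^\e(q)|$, and when $q \equiv \e \pmod 3$ both possibilities occur. These $3$-cycle elements satisfy $\dim C_V(x) = 4$ and give $|x^G| > \tfrac{1}{2}q^{18}$ with at most roughly $32(q+1)^2$ representatives in $H$, so the contribution is harmless once included; but as written your stratification omits it. Second, a classification slip: for $q$ odd the involutions $[-I_1,I_7]$ and $[-I_3,I_5]$ are \emph{graph} automorphisms of $G_0 = {\rm P\O}_8^+(q)$, since products of an odd number of reflections in ${\rm O}_8^+(q)$ induce the reflection $S_3$ inside ${\rm Out}(G_0)$; only $[-I_2,I_6]$ and $[-I_4,I_4]$ lie in ${\rm Inndiag}(G_0)$. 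You have the right class sizes (about $q^7$ and $q^{15}$, not $q^{12}$ for $[-I_3,I_5]$), but labelling them as "inner" would mislead the split of ${\rm Aut}(G_0)$-classes into $G_0$-classes that you need when deriving the actual counts. With these two points fixed, the plan reproduces the paper's argument, including the observation that the $[-I_1,I_7]$ and $[-I_2,I_6]$ involutions are the dominant contribution (order $q^{-1}$), with field, graph-field, triality and higher-order semisimple terms all lower order.
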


\begin{proof}
Let $V$ be the natural module for $G_0$ and write $q=p^f$ with $p$ a prime. Here $H$ is the stabiliser in $G$ of an orthogonal decomposition 
\begin{equation}\label{e:vi}
V = V_1 \perp V_2 \perp V_3 \perp V_4,
\end{equation}
where each $V_i$ is a non-degenerate $2$-space of type $\e$. The precise structure of $H_0$ is given in \cite[Proposition 4.2.11]{KL} and we note that $|H_0| = 2^{m}.24(q-\e)^4$, where $m=1+2\delta_{2,p}$. If $q<5$ then the maximality of $H$ implies that $\e=-$ and using {\sc Magma} one checks that $b(G,H) = 2+\delta_{2,q}$. Therefore, for the remainder we will assume that $q \geqs 5$. We refer the reader to \cite[Section 3.5]{BG} for information on the conjugacy classes of elements of prime order in ${\rm Aut}(G_0)$.

Let $x \in H$ be an element of prime order $r$. First assume $r \geqs 5$, so either $x$ is semisimple and $r$ divides $q-\e$, or $x$ is a field automorphism and $q=q_0^r$. Note that $q \geqs 8$ since $q-\e$ is indivisible by $r$ when $q =5$ or $7$. If $x$ is semisimple, then 
\[
|x^G| \geqs \frac{|{\rm SO}_{8}^{+}(q)|}{|{\rm SO}_{6}^{-}(q)|{\rm GU}_{1}(q)|} > \frac{1}{2}q^{12} = b_1
\]
and plainly there are fewer than $a_1 = (q+1)^4$ such elements in $H$.  Similarly, if $x$ is a field automorphism then $|x^G|>\frac{1}{8}q^{112/5} = b_2$ and we note that $|H| \leqs 2^4.72(q+1)^4 =  a_2$. It follows that the combined contribution to $\mathcal{Q}(G,2)$ from elements of order at least $5$ is less than $a_1^2/b_1 +a_2^2/b_2< q^{-3}$.

Next assume $x \in H$ is a unipotent element of order $3$. Here $p=3$ and $x$ acts as a $3$-cycle on the summands in \eqref{e:vi}, which implies that $x$ has Jordan form $[J_3^2,J_1^2]$ on $V$. Therefore, $|x^G|>\frac{1}{8}q^{18}=b_3$ and $H$ contains at most $8|{\rm O}_{2}^{\e}(q)|^2 \leqs 32(q+1)^2=a_3$ such elements. Since $q \geqs 9$, it follows that the contribution from these elements is less than $a_3^2/b_3 < q^{-9}$. 

Now assume $p \ne 3$ and $x \in H$ is a semisimple element of order $3$, so $x \in H \cap G_0 = H_0$. Suppose $q \not\equiv \e \imod{3}$. Since $|{\rm O}_{2}^{\e}(q)|$ is indivisible by $3$, it follows that $x$ must induce a $3$-cycle on the set of spaces in the decomposition \eqref{e:vi}. Therefore $\dim C_V(x) = 4$, $|x^G|>\frac{1}{2}q^{18} = b_4$ and we note that $i_3(H_0) \leqs 32(q+1)^2 = a_4$.

Now suppose $q \equiv \e \imod{3}$. There are three ${\rm Aut}(G_0)$-classes of elements of order $3$ in $G_0$, represented by 
\[
[I_2, \omega I_3, \omega^2 I_3],\;\; [I_4, \omega I_2, \omega^2 I_2],\;\; [I_6, \omega, \omega^2]
\]
(modulo scalars), where $\omega \in \mathbb{F}_{q^2}$ is a primitive cube root of unity (the ${\rm Aut}(G_0)$-class of the latter element splits into three $G_0$-classes, so there are five $G_0$-classes in total). First assume $x$ is of type $[I_{2}, \omega I_3, \omega^2I_3]$, so $|x^G|>\frac{1}{2}(q+1)^{-1}q^{19} = b_5$. Here we calculate that there are at most
\[
16|{\rm O}_{2}^{\e}(q)|^2 + 2^3\binom{4}{1} \leqs 64(q+1)^2+32 = a_5
\]
such elements in $H$. Similarly, if $x$ is the image of $[I_{4}, \omega I_2, \omega^2I_2]$ then $|x^G|>\frac{1}{2}q^{18} = b_6$ and $H$ contains at most
\[
 8|{\rm O}_{2}^{\e}(q)|^2 + 2^2\binom{4}{2} \leqs 32(q+1)^2 +24 = a_6
\]
of these elements. Finally, suppose $x$ is of type $[I_{6}, \omega, \omega^2]$. Here we have $|x^G|>\frac{1}{2}q^{12} = b_7$ and $|x^{G_0} \cap H_0| \leqs 2\binom{4}{1} = 8$, so $|x^{{\rm Aut}(G_0)} \cap H| \leqs 24 =a_7$. 

Since $a_4^2/b_4 < q^{-8}$ and $\sum_{i=5}^{7}a_i^2/b_i < q^{-7}$ for all $q \geqs 5$, we conclude that the contribution to $\mathcal{Q}(G,2)$ from semisimple or unipotent elements of order $3$ is less than $q^{-7}$.

Next let us consider the contribution from semisimple or unipotent involutions (including involutory graph automorphisms). It will be useful to observe that ${\rm O}_{2}^{\e}(q) \cong D_{2(q-\e)}$.

First assume $p=2$, so $q \geqs 8$. There are five classes of unipotent involutions in ${\rm Aut}(G_0)$, represented by the elements 
\[
b_1, \; a_2, \; c_2, \; b_3, \; c_4
\]
in the notation of Aschbacher and Seitz \cite{AS}. We claim that the total contribution to $\mathcal{Q}(G,2)$ from these elements is less than $\sum_{i=1}^{5}r_i^2/s_i< q^{-2}$, where the terms $r_i$ and $s_i$ are defined in the following table:
\[
\begin{array}{llll} \hline
i & x & r_i & s_i \\ \hline
1 & b_1 & 12(q+1) & \frac{1}{2}q^7 \\
2 & a_2 & 12(q+1) & \frac{1}{2}q^{10} \\
3 & c_2 & 18(q+1)^2 & \frac{1}{2}q^{12} \\
4 & b_3 & 12(q+1)^2(q+7) & \frac{1}{2}q^{15} \\
5 & c_4 & (q+1)^3(q+13) & \frac{1}{2}q^{16} \\ \hline
\end{array}
\]

Here $r_i$ is an upper bound on $|x^{{\rm Aut}(G_0)} \cap H|$ and 
$s_i$ is a lower bound on $|x^{G_0}|$ (see the proof of \cite[Proposition 3.22]{Bur2}, for example), so the claim follows from Lemma \ref{l:calc}. 

For instance, suppose $x$ is a $c_2$-type involution. Here the ${\rm Aut}(G_0)$-class of $x$ is a union of three distinct $G_0$-classes, labelled $c_2$, $a_4$ and $a_4'$ in \cite{AS}. If $x$ is $G_0$-conjugate to $c_2$, then $x$ fixes each summand $V_i$ in \eqref{e:vi}, acting nontrivially on exactly two of the summands. Since $i_2({\rm O}_{2}^{\e}(q)) = q-\e$, it follows that $|x^{G_0} \cap H| \leqs \binom{4}{2}(q-\e)^2 \leqs 6(q+1)^2$. Similarly, if $x$ is $G_0$-conjugate to $a_4$ or $a_4'$ then $x$ induces a double transposition on the $V_i$ and there are at most $3|{\rm O}_{2}^{\e}(q)|^2 \leqs 12(q+1)^2$ such elements in $H$. We conclude that $|x^{{\rm Aut}(G_0)} \cap H| \leqs 18(q+1)^2$, which explains the expression for $r_3$ given in the above table. Similar reasoning applies in the other cases.

Now assume $p \ne 2$ and $x$ is a semisimple involution. If $x$ is a graph automorphism of type $[-I_{1}, I_{7}]$, then  
$|x^G|>\frac{1}{4}q^7=b_8$ and we calculate that $H$ contains at most 
\[
a_8=3\binom{4}{1}(q+1) = 12(q+1)
\]
of these involutions. Next assume $x$ is of type $[-I_{3},I_5]$, which represents the other ${\rm Aut}(G_0)$-class of involutory graph automorphisms. Here $|x^G|>\frac{1}{4}q^{15}=b_9$ and by carefully considering the conjugacy classes of involutions in ${\rm O}_{2}^{\e}(q) \wr S_4 < {\rm O}_{8}^{+}(q)$ we deduce that  
\begin{align*}
a_9 & =
3\left(2\binom{4}{2}(q+1) + \binom{4}{3}(q+1)^3+\binom{4}{2}2(q+1)\cdot \binom{2}{1}(q+1)\right) \\
& = 12(q^2+8q+10)(q+1)
\end{align*}
is an upper bound on the total number of involutions in $H$ of this form. 

Next assume $x$ is ${\rm Aut}(G_0)$-conjugate to an involution of the form $[-I_{2},I_{6}]$. There are two such ${\rm Aut}(G_0)$-classes, each of which splits into three $G_0$-classes, giving six $G_0$-classes in total. Now $|x^G|>\frac{1}{4}q^{12}=b_{10}$ and we see that there are at most
\[
a_{10} = 6\left(\binom{4}{1}+\binom{4}{2}(q+1)^2+\binom{4}{2}2(q+1)\right) = 12(3q^2+12q+11)
\]
such elements in $H$. Finally, let us assume $x$ is ${\rm Aut}(G_0)$-conjugate to an involution of the form $[-I_{4},I_4]$; there are two such ${\rm Aut}(G_0)$-classes, one of which splits into three $G_0$-classes. Now $|x^G|>\frac{1}{8}q^{16} = b_{11}$ and we calculate that $H$ contains at most
\begin{align*}
a_{11} & = 4\left(\binom{4}{2}+12(q+1)^3+(q+1)^4+\binom{4}{2}2(q+1)\left(2+ (q+1)^2\right) + 3(2(q+1))^2\right) \\
& = 4q^4+112q^3+360q^2+496q+268
\end{align*}
involutions of this type.

Putting all of the above estimates together, we conclude that the contribution to $\mathcal{Q}(G,2)$ from semisimple involutions is less than 
\[
\sum_{i=8}^{11}a_i^2/b_i < 2q^{-1}
\]
for all $q \geqs 5$. Given the previous estimate for unipotent involutions when $p=2$, it follows that the total contribution from semisimple or unipotent involutions (including involutory graph automorphisms) is less than $2q^{-1}$.

To complete the proof, we need to consider field and graph-field automorphisms of order $2$ and $3$, as well as graph automorphisms of order $3$.

Suppose $x$ is an involutory field or graph-field automorphism, so $q \geqs 9$ and $|x^G|>\frac{1}{4}q^{14}=b_{12}$. By applying the upper bounds on $|x^G \cap H|$ presented in the proof of \cite[Proposition 2.11]{Bur3}, we deduce that $H$ contains at most
\[
a_{12}  = 2\left((2q^{1/2})^4 + \binom{4}{2}|{\rm O}_{2}^{+}(q)|\cdot (2q^{1/2})^2+3|{\rm O}_{2}^{+}(q)|^2\right)  = 152q^2-144q+24
\]
of these elements. 

Finally, let us assume $x$ is a field, graph or graph-field automorphism of order $3$. First assume $x \in H$ is a triality graph automorphism with $C_{G_0}(x) = G_2(q)$, so $|x^G|>\frac{1}{8}q^{14} = b_{13}$. Fix a set of simple roots $\{a_1, \ldots, \a_4\}$ for the ambient simple algebraic group $\bar{G} = D_4$, labelled in the usual way (so $\a_2$ corresponds to the central node in the corresponding Dynkin diagram). We may assume $x$ cyclically permutes the roots $\a_1, \a_3$ and $\a_4$, so it induces a $3$-cycle on the factors of a standard maximal torus of $\bar{G}$. It follows that $x$ acts as a $3$-cycle on the summands $V_i$ in \eqref{e:vi} and then by counting elements of order $3$ in the coset $(H \cap {\rm PGO}_{8}^{+}(q))x$ we deduce that there are at most  
\[
\frac{4!}{3}\cdot 3|{\rm O}_{2}^{\e}(q)|^2 \leqs 96(q+1)^2 = a_{13}
\]
of these specific graph automorphisms in $H$. For all other field, graph and graph-field automorphisms of order $3$ we have $|x^G|>\frac{1}{8}q^{20} = b_{14}$ and we note that 
\[
|H| \leqs 2^4.72(q+1)^4\log q = a_{14}.
\]

Therefore, the total contribution from field and graph-field automorphisms of order $2$ and $3$, together with graph automorphisms of order $3$, is less than
\[
\eta a_{12}^2/b_{12} + a_{13}^2/b_{13} + a_{14}^2/b_{14}, 
\]
where $\eta=1$ if $q=q_0^2$, otherwise $\eta = 0$. For $q \geqs 7$, one can check this is less than $q^{-2}$. If $q = 5$ then we can remove the $\log q$ factor in the expression for $a_{14}$ and in this way we deduce that the contribution is less than $\frac{1}{4}$.

Finally, by bringing together the above estimates, we conclude that 
\[
\mathcal{Q}(G,2) < q^{-3} + q^{-7} + 2q^{-1} + \mu
\]
for all $q \geqs 5$, where $\mu = q^{-2}$ if $q \geqs 7$ and $\mu = \frac{1}{4}$ if $q=5$.  Therefore $\mathcal{Q}(G,2)<1$ and thus $b(G,H) = 2$. We also deduce that $\mathcal{P}(G,2) \to 1$ as $q$ tends to infinity.
\end{proof}

\begin{lem}\label{l:08+}
Suppose $G_0 = {\rm P\O}_{8}^{+}(q)$ and $H$ is of type ${\rm O}_{2}^{-}(q^2) \times {\rm O}_{2}^{-}(q^2)$. Then $b(G,H)=2$ and $\mathcal{P}(G,2) \to 1$ as $q \to \infty$.
\end{lem}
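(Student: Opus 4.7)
The plan is to establish the bound $\mathcal{Q}(G,2)<1$ (and moreover $\mathcal{Q}(G,2)\to 0$ as $q\to\infty$), so that the claim follows from Lemma \ref{l:base} and the observation $\mathcal{P}(G,2)\geqs 1-\mathcal{Q}(G,2)$. I would deal with the two or three smallest values of $q$ by direct computation in \textsc{Magma}, constructing $H$ via the usual \texttt{MaximalSubgroups(G:IsSolvable:=true)} approach inside ${\rm Aut}(G_0)$; for the remainder, I would fix an arbitrary $q$ above that threshold and exploit the structural fact that ${\rm O}_2^-(q^2)\cong D_{2(q^2+1)}$, so that $|H_0|$ divides a small constant times $(q^2+1)^2$. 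Combined with the outer layer of $H$, this gives the crude bound $|H|\leqs c(q^2+1)^2\log q$ for some absolute constant $c$, and, crucially, every element of prime order in $H_0$ is semisimple (since $(|H_0|,q)=1$).

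I would then partition the sum \eqref{e:Q} by type of prime-order element and apply Lemma \ref{l:calc} to each piece. A semisimple involution in $H_0$ is a reflection in one of the two four-dimensional $\mathbb{F}_{q^2}$-summands stabilised by $H$, and a standard centraliser calculation in ${\rm O}_8^+(q)$ yields $|x^G|\gg q^{12}$ against at most $O(q^2)$ such elements in $H$, giving an $O(q^{-8})$ contribution. For an element $x\in H_0$ of odd prime order $r\mid q^2+1$ (so $r\geqs 5$), $x$ is regular semisimple on the natural module with $|x^G|\gg q^{24}$; since the number of $G_0$-classes of a fixed order $r$ is $O(r)$, $r\leqs q^2+1$ and there are $O(\log q)$ choices for $r$, this piece contributes $O(q^{-2})$. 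A field automorphism $x$ of prime order $r$ satisfies $|x^G|\geqs c\,q^{28(1-1/r)}$ and $|x^G\cap H|\leqs |H|$, again yielding a negligible $O(q^{-2}\log\log q)$ contribution.

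The main obstacle is the triality (and combined field-triality) graph automorphisms, which the maximality hypothesis $G\not\leqs\langle {\rm PGO}_8^+(q),\phi\rangle$ forces into $H$. Unlike the semisimple case, a triality element has no Jordan form on the natural $8$-space, so the count $|x^G\cap H|$ must be obtained by direct bookkeeping in $H/H_0$, bounding the number of elements of order three in each relevant coset of $H_0$ inside $H$, in the style of the estimate $a_{13}$ in the proof of Lemma \ref{l:c2_3}. On the other hand, the centraliser of such an $x$ in $G_0$ is (up to field extension) of type $G_2$ or ${}^3D_4$, so $|x^G|\gg q^{14}$ and this combines with the $O(q^2)$ count to give an $O(q^{-10})$ contribution. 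Adding all the pieces yields $\mathcal{Q}(G,2)=O(q^{-1})$, which is less than $1$ for $q$ above the chosen threshold and tends to $0$, establishing both $b(G,H)=2$ and $\mathcal{P}(G,2)\to 1$.
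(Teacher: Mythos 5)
Your overall strategy matches the paper's: bound $\mathcal{Q}(G,2)$ above by something tending to $0$, handle small $q$ by \textsc{Magma}, and appeal to Lemma \ref{l:base}. However, there are two substantive errors in the intermediate claims, one of which leaves a genuine hole.

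First, the assertion that $(|H_0|,q)=1$, and hence that every prime-order element of $H_0$ is semisimple, is false when $p=2$. For $q$ even we have $d=(2,q-1)=1$ and $H_0 \cong (D_{2(q^2+1)} \times D_{2(q^2+1)}).2^2$, so $|H_0|$ is divisible by a nontrivial power of $2=p$. The reflections in each dihedral factor are involutions, and in characteristic $2$ these are \emph{unipotent}, not semisimple. Your case analysis therefore omits the unipotent involutions entirely when $p=2$; these must be accounted for (they are of Aschbacher–Seitz type $c_2$ or $c_4$, and the paper notes $|x^G| > \frac{3}{2}q^{12}$ for them, which is enough). This is a real gap, though one that can be filled without changing the outcome.

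Second, it is not true that an odd-prime-order element of $H_0$ is regular semisimple with $|x^G|\gg q^{24}$. An element of the form $(x_1,1) \in D_{2(q^2+1)/d}\times D_{2(q^2+1)/d}$ with $x_1$ a nontrivial rotation has a $4$-dimensional fixed space on the natural module and centraliser of order roughly $|\mathrm{SO}_4^-(q)|\cdot(q^2+1)$, yielding only $|x^{G_0}| > \frac{1}{2}q^{20}$. This weaker bound is still ample for the sum, but the regularity claim is wrong. You should also note that your $O(q^2)$ count for the triality automorphisms in $H$ is unsubstantiated and looks optimistic; the paper's cruder bound $|x^G\cap H| \leqs 2|H\cap\mathrm{PGO}_8^+(q)| \leqs 64(q^2+1)^2 = O(q^4)$ is the safe one and still gives a contribution $O(q^{-6})$. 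A useful observation that you miss, and which the paper uses, is that $H = N_G(P)$ for $P$ a Sylow $\ell$-subgroup of $G_0$ (with $\ell$ an odd prime divisor of $q^2+1$); this simplifies the \textsc{Magma} computations for small $q$ considerably. In fact the paper avoids most of your finer type-by-type analysis inside $H\cap\mathrm{PGO}_8^+(q)$ altogether, simply using the uniform bound $|x^{G}| > \frac{1}{2}q^{12}$ (valid for all prime-order elements there, unipotent or semisimple) against $|H\cap\mathrm{PGO}_8^+(q)| \leqs 32(q^2+1)^2$, which is cleaner and sidesteps both of the pitfalls above.
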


\begin{proof}
Set $d=(2,q-1)$ and note that 
\[
H_0 = (D_{\frac{2}{d}(q^2+1)} \times D_{\frac{2}{d}(q^2+1)}).2^2 < (\O_{4}^{-}(q) \times \O_{4}^{-}(q)).2^2
\]
and the maximality of $H$ implies that $G$ contains triality graph or graph-field automorphisms (see \cite[Table 8.50]{BHR}). Let us also observe that $H = N_G(P)$, where $P$ is a Sylow $\ell$-subgroup of $G_0$ and $\ell$ is an odd prime divisor of $q^2+1$. Given this, it is easy to check the cases with $q \leqs 7$ using {\sc Magma}, so for the remainder of the proof we will assume that $q \geqs 8$. Let $x \in H$ be an element of prime order $r$.

First assume $x \in H \cap {\rm PGO}_{8}^{+}(q)$. If $r$ is odd, then $x$ is semisimple, $r$ divides $q^2+1$ and
\[
|x^{G_0}| \geqs \frac{|{\rm SO}_{8}^{+}(q)|}{|{\rm SO}_{4}^{-}(q)||{\rm GU}_{1}(q^2)|}  > \frac{1}{2}q^{20}.
\]
Now suppose $r=2$. As explained in the proof of \cite[Proposition 3.4]{Bur4}, every involution in $H \cap {\rm PGO}_{8}^{+}(q)$ is contained in ${\rm Inndiag}(G_0)$, which is the subgroup of ${\rm Aut}(G_0)$ generated by the inner and diagonal automorphisms of $G_0$. As a consequence, if $p=2$ then $x$ is $G$-conjugate to $c_2$ or $c_4$ (in the notation of \cite{AS}), which implies that $|x^{G}|>\frac{3}{2}q^{12}$. Similarly, if $p \ne 2$ then
\[
|x^{G}| \geqs 3\left(\frac{|{\rm O}_{8}^{+}(q)|}{|{\rm O}_{6}^{-}(q)||{\rm O}_{2}^{-}(q)|}\right)  > \frac{1}{2}q^{12} = b_1.
\]
Since $|H \cap {\rm PGO}_{8}^{+}(q)| \leqs 32(q^2+1)^2 = a_1$, it follows that the combined contribution to $\mathcal{Q}(G,2)$ from elements in $H \cap {\rm PGO}_{8}^{+}(q)$ is less than $a_1^2/b_1$.

Finally, let us assume $x \in H \setminus {\rm PGO}_{8}^{+}(q)$, so $x$ is a field, graph or graph-field automorphism. If $x$ is a field or graph-field automorphism of odd order, then $|x^G|>\frac{1}{8}q^{56/3} = b_2$ and we note that $|H| \leqs 96(q^2+1)^2\log q = a_2$. Similarly, if $x$ is an involutory field or graph-field automorphism, then $|x^G|>\frac{1}{8}q^{14} = b_3$ and there are at most $2|H \cap {\rm PGO}_{8}^{+}(q)| \leqs 64(q^2+1)^2 = a_3$ of these elements in $H$. Finally, suppose $x$ is a triality graph automorphism. Here $|x^G|>\frac{1}{8}q^{14} = b_4$ and again we observe that $H$ contains at most $64(q^2+1)^2 = a_4$ of these elements.

To summarise, we have shown that
\[
\mathcal{Q}(G,2) < \sum_{i=1}^{4}a_i^2/b_i
\]
and one checks that this upper bound is less than $1$ for $q \geqs 8$ (in addition, it is less than $q^{-1/2}$ if $q \geqs 11$). The result follows.
\end{proof}

\begin{lem}\label{l:sp4_2}
Suppose $G_0 = {\rm Sp}_{4}(q)$ and $H$ is of type ${\rm O}_{2}^{\e}(q) \wr S_2$ or ${\rm O}_{2}^{-}(q^2)$. Then $b(G,H)=2$ and $\mathcal{P}(G,2) \to 1$ as $q \to \infty$.
\end{lem}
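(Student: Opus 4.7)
The plan is to apply Lemma \ref{l:base} by verifying $\mathcal{Q}(G,2) < 1$, and in fact to show that $\mathcal{Q}(G,2) \to 0$ as $q \to \infty$. Throughout we have $q = 2^f \geqs 4$ and, by the maximality hypothesis recorded in Table \ref{tab:nonpar}, $G$ contains an involutory graph or graph-field automorphism of $G_0$. In case (e), $H_0 = (D_{2(q-\e)})^2.2$ has order $8(q-\e)^2$, while in case (f), $H_0 = D_{2(q^2+1)}$ has order $4(q^2+1)$; in either instance $|H| \leqs 2f\cdot |H_0|$. Since $q$ is even, the integers $q-\e$ and $q^2+1$ are odd, so each underlying dihedral group has no central involution.

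First I would handle the small values of $q$ (say $q \in \{4,8\}$, and possibly $q = 16$) computationally using {\sc Magma}, in the manner described in Section \ref{ss:comp}. For the remaining cases, I would partition the elements of prime order in $H$ into three families: (i) semisimple elements of odd prime order $r$, with $r$ dividing $q-\e$ in case (e) or $q^2+1$ in case (f); (ii) involutions in $H_0$; and (iii) outer automorphisms in $H$, namely involutory graph or graph-field automorphisms and, when $f$ is composite, field automorphisms of odd prime order. For (i) and (iii) the analysis is entirely analogous to the corresponding parts of the proof of Lemma \ref{l:sp4}: semisimple elements of $G_0$ lie in classes of size at least $\frac{1}{2}q^4(q^2-1)$, while field and graph-field automorphisms of order $r$ satisfy $|x^G| \geqs q^{10(1-1/r)}$ up to a bounded factor. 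Combined with the crude bound $|H| \leqs 16(q+1)^2 f$ (case (e)) or $|H| \leqs 8(q^2+1) f$ (case (f)), each contribution tends to $0$ as $q \to \infty$.

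The main obstacle is part (ii). The smallest involution class in ${\rm Sp}_{4}(q)$ has size only $2(q^4-1)$ (the classes of type $b_1$ and $a_2$ in the notation of \cite{AS} used in the proof of Lemma \ref{l:sp4}), so the naive bound $i_2(H_0)^2/(2(q^4-1))$ is borderline in case (e) and not small enough in case (f). I would therefore identify the $G_0$-class of each involution in $H_0$. In case (e), the reflections in each of the two factors of $({\rm O}_2^{\e}(q))^2$ embed as transvections acting on a single summand $V_i$ of the orthogonal decomposition $V = V_1 \perp V_2$ stabilised by $H_0$, giving $2(q-\e)$ transvections in total; all remaining involutions in $H_0$ (products of reflections in both factors, or elements involving the $S_2$-swap) satisfy $\mathrm{rank}(x-I) = 2$ and so lie in an ${\rm Sp}_4(q)$-class of size at least $2(q^4-1)$, but there are only $O(q^2)$ such elements in $H_0$. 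In case (f), every reflection in $H_0 \cong D_{2(q^2+1)}$ fixes a non-degenerate $\mathbb{F}_{q^2}$-line on the ${\rm O}_2^{-}(q^2)$-module, so on $V$ the operator $x-I$ has $\mathbb{F}_q$-rank $2$ with a non-degenerate (rather than totally isotropic) image, placing $x$ in the larger class of type $c_2$ with $|x^G| = (q^2-1)(q^4-1)$; since there are at most $q^2+1$ such reflections, their contribution is of order $q^{-2}$.

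Combining the three contributions shows that $\mathcal{Q}(G,2) \to 0$ as $q \to \infty$, and one checks that the bound is less than $1$ for the remaining explicit values of $q$ not covered by the computer check. This simultaneously yields $b(G,H) = 2$ and $\mathcal{P}(G,2) \to 1$.
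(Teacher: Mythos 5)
Your overall strategy --- apply Lemma \ref{l:base} via $\mathcal{Q}(G,2)<1$, with a {\sc Magma} check for small $q$ and a class-by-class estimate --- is the same as the paper's, and your treatment of semisimple elements and of outer automorphisms is in order. However, there is a genuine gap in your analysis of the rank-$2$ involutions in case (e). You correctly separate out the $2(q-\e)$ single-factor reflections (rank $1$ transvections), but then lump the remaining involutions --- roughly $(q-\e)^2$ ``double reflections'' $(\sigma,\tau)$ together with the $2(q-\e)$ ``swap'' involutions --- into one bucket, using only the uniform lower bound $|x^G|\geqs 2(q^4-1)$ coming from the smallest involution class. With $A=O(q^2)$ such elements and $B=2(q^4-1)$, Lemma \ref{l:calc} gives a contribution of at most $A^2/B$, and $((q-\e)^2+2(q-\e))^2/(2(q^4-1))\to\tfrac{1}{2}$, not $0$. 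So this bound does not establish $\mathcal{P}(G,2)\to 1$, and it leaves $\mathcal{Q}(G,2)<1$ unclear for moderate $q$ once the other contributions are added (which is why the paper's computer check goes up to $q\leqs 32$, not just $q\leqs 16$).

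The missing step, as in the paper's proof (which cites \cite[Prop.\ 3.1]{Bur4}), is to further split the rank-$2$ involutions by Aschbacher--Seitz type. The $2(q-\e)$ swap involutions --- elements $(\sigma,\sigma^{-1})$ twisted by the $S_2$-interchange --- satisfy $B(v,xv)=0$ for all $v\in V$, since the two summands contribute equal terms which cancel in characteristic $2$; hence they are of type $a_2$, $G$-fused with the $b_1$ transvections, giving $|x^G\cap H|=4(q-\e)$ in a class of size $2(q^4-1)$ and a contribution of order $q^{-2}$. The $(q-\e)^2$ double reflections have $B(v,xv)\not\equiv 0$, so they are of type $c_2$, lying in the much larger class of size $(q^2-1)(q^4-1)$, again a contribution of order $q^{-2}$. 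Your case (f) reasoning (placing the reflections of $H_0$ in the $c_2$ class) is exactly the kind of finer identification that is needed but absent in your case (e); you should also double-check the structure of $H_0$ there, since writing $H_0\cong D_{2(q^2+1)}$ of order $4(q^2+1)$ is internally inconsistent.
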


\begin{proof}
In both cases, $q \geqs 4$ is even and $G$ contains graph automorphisms. If $q \leqs 32$ then the desired result can be checked using {\sc Magma}, so we will assume $q \geqs 64$. 

First assume $H$ is of type ${\rm O}_{2}^{\e}(q) \wr S_2$, so 
\[
H_0 = {\rm O}_{2}^{\e}(q) \wr S_2  = D_{2(q-\e)} \wr S_2 < {\rm Sp}_{2}(q) \wr S_2 < G_0.
\]
Let $x \in H$ be an element of prime order $r$. As noted in the proof of \cite[Proposition 3.1]{Bur4}, if $x$ is a unipotent involution then $|x^{G}\cap H|  = 4(q-\e) = a_1$ and $|x^{G}| = 2(q^4-1) = b_1$ if $x$ is $G$-conjugate to a long root element, otherwise $|x^{G} \cap H| \leqs (q+1)^2 = a_2$ and $|x^{G}| = (q^2-1)(q^4-1) = b_2$.
If $x$ is semisimple, then $r$ divides $q-\e$,  
\[
|x^G| \geqs \frac{|{\rm Sp}_{4}(q)|}{|{\rm GU}_{2}(q)|} = q^3(q-1)(q^2+1) = b_3
\]
and we note that $|H_0| \leqs 8(q+1)^2 = a_3$. Similarly, if $x$ is a field automorphism of odd order, then $|x^G|>q^{20/3} = b_4$ and plainly there are fewer than $a_4 = 8(q+1)^2\log q$ field automorphisms in $H$. 

Now assume $x$ is an involutory field or graph automorphism (note that $G$ contains one or the other, but not both). First assume $x$ is a field automorphism, so $\log q$ is even. Here $|x^G| = q^2(q+1)(q^2+1) = b_5$ and we calculate that $|x^G \cap H| \leqs 6q-2 = a_5$. Indeed, if $\e=-$ then $|x^G \cap H| \leqs |{\rm O}_{2}^{-}(q)| = 2(q+1)$, whereas if $\e=+$ we get
\[
|x^G \cap H| \leqs |{\rm O}_{2}^{+}(q)| + \left(\frac{|{\rm O}_{2}^{+}(q)|}{|{\rm O}_{2}^{+}(q^{1/2})|} + \frac{|{\rm O}_{2}^{+}(q)|}{|{\rm O}_{2}^{-}(q^{1/2})|}\right)^2 = 6q-2.
\]
Finally, suppose $x$ is an involutory graph automorphism, so $\log q$ is odd. Here we have $C_{G_0}(x) = {}^2B_2(q)$, so $|x^G| = q^2(q+1)(q^2-1) = b_6$ and we note that $H$ contains fewer than $a_6 = 8(q+1)^2$ of these elements. 

We conclude that
\[
\mathcal{Q}(G,2) < \sum_{i=1}^{4}a_i^2/b_i + \a a_5^2/b_5 + (1-\a) a_6^2/b_6,
\]
where $\a=1$ if $\log q$ is even, otherwise $\a=0$. One checks that this upper bound is less than $1$ for all $q \geqs 64$. In addition, it is less than $q^{-1/2}$ if $q \geqs 2^{12}$.

A very similar argument applies when $H$ is of type ${\rm O}_{2}^{-}(q^2)$ and we omit the details.
\end{proof}

\begin{lem}\label{l:c5}
Suppose $G_0 = {\rm U}_{3}(q)$ and $H$ is of type ${\rm GU}_{3}(2)$, where $q = 2^k$ and $k \geqs 3$ is a prime. Then $b(G,H)=2$ and $\mathcal{P}(G,2) \to 1$ as $q \to \infty$.
\end{lem}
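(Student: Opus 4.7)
My plan is to prove both parts using the probabilistic method (Lemmas \ref{l:base} and \ref{l:calc}), showing that $\mathcal{Q}(G,2) \to 0$ as $q \to \infty$, together with a direct {\sc Magma} check for the smallest value $k = 3$.

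First I would record the structure of $H$. Since $H$ is of type ${\rm GU}_3(2)$ (a $\mathcal{C}_5$ subfield subgroup arising from the inclusion $\mathbb{F}_4 \subset \mathbb{F}_{q^2}$), the intersection $H_0 := H \cap G_0$ is isomorphic to a quotient of ${\rm GU}_3(2)$, so $|H_0|$ is bounded by the absolute constant $648$. Since $q = 2^k$ with $k$ prime, the outer automorphism group of $G_0$ has order $O(k)$ and its prime divisors lie in $\{2, 3, k\}$; hence $|H| \leqs Ck$ for some absolute constant $C$, and every element of prime order in $H$ has order $2$, $3$, or $k$.

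Next I would split the contributions to $\mathcal{Q}(G, 2)$ according to the type of $x \in H$ of prime order $r$, in the spirit of the previous lemmas in this section. For $x \in H_0$, the element $x$ is either a unipotent transvection (with $|x^G| = (q - 1)(q^3 + 1) \gg q^4$) or a semisimple element of order $3$ (with $|x^G| \gg q^4$ as well, using the class size formulas for ${\rm U}_3(q)$ recorded in \cite[Section 3.3]{BG}). In either case $|x^G \cap H| \leqs |H_0| \leqs 648$. For $x \in H \setminus G_0$ of order $2$ or $3$, the element is an outer automorphism and $|x^G \cap H| = O(1)$, while $|x^G|$ is still at least of order $q^4$. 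Finally, if $x$ is a field automorphism of order $k$, then $C_{G_0}(x) \cong {\rm U}_3(2)$, so
\[
|x^G| = \frac{|G_0|}{|{\rm U}_3(2)|} \gg q^7,
\]
and the number of such elements in $H$ is bounded crudely by $(k-1)|H_0| = O(k)$. Applying Lemma \ref{l:calc} class by class yields
\[
\mathcal{Q}(G, 2) \;=\; O\!\left(\frac{1}{q^4}\right) + O\!\left(\frac{k^2}{q^7}\right) \;=\; O\!\left(\frac{(\log q)^2}{q^4}\right),
\]
which tends to $0$, and in particular is less than $1$ for all $k \geqs 5$.

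For the residual case $k = 3$ (that is, $q = 8$), where the asymptotic estimate may not bite, I would verify $b(G, H) = 2$ directly in {\sc Magma} by the standard approach described in Section \ref{ss:comp}: construct $G$ via {\tt AutomorphismGroupSimpleGroup("U", 3, 8)}, obtain $H$ via {\tt MaximalSubgroups(G : IsSolvable := true)}, and exhibit $g \in G$ with $H \cap H^g = 1$ by random search (the lower bound $b(G, H) \geqs 2$ is immediate from Lemma \ref{l:easy} since $|H|$ is bounded while $|\O|$ is huge). The main obstacle is bookkeeping: ensuring the estimates on the number of prime-order elements of each type in $H$ are stated uniformly enough in $k$ to avoid case analysis. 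In practice the field automorphism contribution is the one where some care is needed, but even the crude bound $(k-1)|H_0|$ is comfortably absorbed by the $q^7$ in the denominator, so no delicate counting should be required.
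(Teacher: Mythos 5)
Your approach is essentially the paper's: bound $\mathcal{Q}(G,2)$ probabilistically for large $q$ and handle the smallest cases by {\sc Magma}. The paper is actually cruder: it records from \cite[Proposition 4.5.3]{KL} that $H_0 = {\rm PGU}_3(2)$ (order $216$) if $k=3$ and $H_0 = {\rm U}_3(2)$ (order $72$) if $k>3$, so $|H| \leqs 2k\,|{\rm PGU}_3(2)| = 432k$, and then applies Lemma \ref{l:calc} in one stroke with $A = 432k$ and $B = (q-1)(q^3+1)$, the minimal class size of an element of prime order. This single estimate already gives $\mathcal{Q}(G,2) < 1$ for $k \geqs 7$ (and $\mathcal{Q}(G,2) \to 0$), so the paper simply {\sc Magma}-checks \emph{both} $k=3$ and $k=5$. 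Your finer split by element type is not needed once $|H|$ is this small, though it is certainly a valid refinement.

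There is one genuine (if small) gap you should close. You only plan a {\sc Magma} check for $k=3$ and assert that the $O$-estimate gives $\mathcal{Q}(G,2) < 1$ for all $k \geqs 5$, but you never make the implied constant explicit, and your bound $|H_0| \leqs 648$ is considerably looser than the true value $|H_0| = 72$ for $k > 3$. At $q = 32$ the minimal class size is $(q-1)(q^3+1) \approx 10^6 \approx q^4$, so with the loose $648^2 \approx 4.2 \times 10^5$ in the numerator the per-class contributions are only barely below $1$ individually and could sum to more than $1$ once you account for several classes plus the outer cosets; the claim ``less than $1$ for $k \geqs 5$'' does not follow from the displayed $O$-estimate alone. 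You also misidentify where the care is needed: the field-automorphism term (class size $\gg q^7$) is harmless at $q=32$; it is the transvections and order-$3$ semisimple classes (class size $\approx q^4$) that make $k=5$ delicate. Either cite the precise structure from \cite[Proposition 4.5.3]{KL} to replace $648$ by $72$ and carry out the explicit bookkeeping at $q=32$, or -- more simply, as the paper does -- add $k=5$ to your {\sc Magma} verification.
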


\begin{proof}
By \cite[Proposition 4.5.3]{KL} we have $H_0 = {\rm PGU}_{3}(2)$ if $k=3$, otherwise $H_0 = {\rm U}_{3}(2)$. 
The cases $k \in \{3,5\}$ can be checked directly using {\sc Magma}, so let us assume $k \geqs 7$. Now $|H| \leqs 2k|{\rm PGU}_{3}(2)| = 432k=a_1$ and $|x^G| \geqs (q-1)(q^3+1) = b_1$ for all $x \in G$ of prime order (minimal if $x \in G_0$ is an involution). Therefore, $\mathcal{Q}(G,2) \leqs a_1^2/b_1< 4q^{-1}$
and the result follows.
\end{proof}

\begin{lem}\label{l:c6}
Suppose $G_0 = {\rm L}_{3}^{\e}(q)$ and $H$ is of type $3^{1+2}.{\rm Sp}_{2}(3)$. Then $b(G,H)=2$ and $\mathcal{P}(G,2) \to 1$ as $q \to \infty$.
\end{lem}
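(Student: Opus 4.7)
The plan is to apply the probabilistic method via Lemma~\ref{l:base}, following the pattern of the preceding lemmas. The first step is to observe that since $q=p$ is prime and $q \equiv \e \imod{3}$, we must have $p \geqs 5$: the congruence excludes $p=3$, and $p=2$ would force $\e=-$ and $q=2$, in which case ${\rm U}_3(2)$ is not simple. Consequently every element of prime order in $H_0$ is semisimple, and the only outer elements of prime order to worry about have order $2$ or $3$.

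The key point is that $|H|$ is bounded by an absolute constant $C$, independent of $q$. Indeed $|H_0| \leqs |3^{1+2}{:}{\rm Sp}_2(3)| = 648$, and $|H/H_0|$ divides $|{\rm Out}(G_0)|$, which is uniformly bounded since $q=p$ prevents nontrivial field automorphisms in the linear case and allows only the single involutory field automorphism in the unitary case.

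On the other hand, inspecting the centralizer data in \cite[Sections 3.2, 3.3]{BG}, every element $x \in G$ of prime order $2$ or $3$ lies in a conjugacy class of size at least $cq^3$ for some absolute constant $c>0$: the minimum centralizer orders in $G_0$ arise for semisimple involutions of type $[-I_1,I_2]$ and for non-regular semisimple elements of order $3$ of type $[\omega,I_2]$, while involutory graph automorphisms (and the involutory field automorphism when $\e=-$) give strictly larger class sizes of order $q^5$. Applying Lemma~\ref{l:calc} with $A = |H|$ and $B = cq^3$ yields
\[
\mathcal{Q}(G,2) \leqs A^2/B = O(q^{-3}) \longrightarrow 0 \quad \text{as } q \to \infty.
\]
Hence $\mathcal{Q}(G,2) < 1$ for $q$ sufficiently large, and Lemma~\ref{l:base} gives $b(G,H) = 2$ together with the asymptotic $\mathcal{P}(G,2) \to 1$.

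The finitely many remaining small primes below the threshold given by the asymptotic are handled directly in \textsc{Magma} using the techniques of Section~\ref{ss:comp}, constructing $H$ as a maximal soluble subgroup of $G$ and finding $g \in G$ with $H \cap H^g = 1$ by random search. No substantive obstacle is anticipated: the entire argument is driven by the fact that $|H|$ is a universal constant while minimal class sizes in $G$ grow polynomially in $q$.
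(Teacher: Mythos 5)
Your proposal is correct and follows essentially the same probabilistic approach as the paper: bound $|H|$ by a constant, bound the minimal class size from below by a polynomial in $q$, apply Lemma~\ref{l:calc} to get $\mathcal{Q}(G,2) \to 0$, and handle the remaining small primes computationally. Two small inaccuracies worth noting, though neither affects the conclusion: the precise structure of $H_0$ (from \cite[Proposition 4.6.5]{KL}) is $3^2.Q_8$ or $3^2.{\rm Sp}_2(3)$ of order at most $216$ rather than $648$ --- the extraspecial group's centre is absorbed into the scalars --- and the minimal class size of a prime-order element of $G$ is actually $\approx q^4$ (the paper uses $(q-1)(q^3-1)$, the estimate the paper attributes to transvections of type $[J_2,J_1]$), so your lower bound $cq^3$ is valid but looser; the net effect is that your threshold for the asymptotic kicking in is considerably larger than the paper's $q>23$, meaning more small primes to check by computer, but the argument is otherwise sound.
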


\begin{proof}
Here $q=p \equiv \e \imod{3}$ and \cite[Proposition 4.6.5]{KL} gives $H_0 = 3^2.Q_8$ if $q \equiv 4\e,7\e \imod{9}$, otherwise $H_0 = 3^2.{\rm Sp}_{2}(3)$. The cases with $q \leqs 23$ can be checked using {\sc Magma}, so let us assume $q>23$. Now $|H| \leqs 432=a_1$ and $|x^G| \geqs (q-1)(q^3-1) = b_1$ for all $x \in G$ of prime order (minimal if $\e=+$ and $x$ is a unipotent element with Jordan form $[J_2,J_1]$), so $\mathcal{Q}(G,2) \leqs a_1^2/b_1 < 8q^{-1}$
and the result follows.
\end{proof}

This completes the proof of Proposition \ref{p:nonpar}.

\section{Exceptional groups: Non-parabolic actions}\label{s:excep}

Here we complete the proof of Theorem \ref{t:as} by handling the almost simple groups where $G_0$ is an exceptional group of Lie type and $H$ is non-parabolic. As explained in Remark \ref{r:isom}, we may (and will) assume that $G_0 \ne {}^2G_2(3)', G_2(2)'$. Our main result is the following.

\begin{prop}\label{p:nonpar2}
Let $G \leqs {\rm Sym}(\O)$ be a finite almost simple primitive group with socle $G_0$ and soluble point stabiliser $H$. Assume $G_0$ is an exceptional group of Lie type and $H$ is non-parabolic. Then $b(G,H) = 2$ and $\mathcal{P}(G,2) \to 1$ as $|G| \to \infty$.
\end{prop}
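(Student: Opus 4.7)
The plan is to follow the same strategy used in Propositions \ref{p:par1}, \ref{p:nonpar1} and the subsequent lemmas of Section \ref{s:class}: reduce via \cite[Table 20]{LZ} to an explicit list of candidates for $(G_0,H)$, dispose of the finitely many small cases with {\sc Magma}, and handle the remaining infinite families with the fixed-point-ratio estimate of Lemmas \ref{l:base} and \ref{l:calc}. Inspecting \cite[Table 20]{LZ} together with Lemma \ref{l:par1}, the non-parabolic soluble maximal subgroups of almost simple exceptional groups fall into: (a) normalisers of maximal tori of order $q-1$, $q+\sqrt{2q}+1$, $q-\sqrt{2q}+1$ in ${}^2B_2(q)$; the analogous tori of order dividing $q^2-1$ or $q \pm \sqrt{3q}+1$ in ${}^2G_2(q)$; the torus normaliser $(q^4-q^2+1).4$ in ${}^3D_4(q)$; and a Singer-type subgroup $(q^2+q+1).6$ in $G_2(q)$ (for suitable $q$); together with (b) a short list of exceptional small cases in $G_2(q)$ ($q$ small), ${}^2F_4(2)'$, $F_4(2)$, $E_6^{\e}(2)$, etc.

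First I would dispatch the cases in (b), along with ${}^2B_2(8)$, ${}^2G_2(27)$, ${}^3D_4(2)$, ${}^3D_4(3)$ and $G_2(q)$ for small $q$, by the standard {\sc Magma} routine described in Section \ref{ss:comp}: construct $G$ and $H$ as permutation groups (via \texttt{AutomorphismGroupSimpleGroup} and \texttt{MaximalSubgroups} with the soluble flag, or via explicit normaliser constructions as in Examples \ref{e:u63} and \ref{e:o123}), then exhibit an element $g \in G$ with $H \cap H^g = 1$ by random search to confirm $b(G,H)=2$.

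For the infinite families in (a), the key observation is that $|H|$ is polynomially bounded: $|H| \leq c \cdot q$ in the Suzuki, Ree and $G_2$ torus cases, $|H|\leq c\cdot q^4$ in the ${}^3D_4$ case, whereas $|G_0|$ has order roughly $q^{10}$, $q^{14}$ or $q^{28}$ respectively. For each $x \in H$ of prime order, I would use the fact that such an $x$ lies in a (proper) maximal torus together with possibly a graph or field automorphism of $G_0$, and invoke the standard lower bounds on $|x^G|$ (coming from $|G|/|C_G(x)|$ with $C_G(x)$ reductive of small rank, cf.\ \cite[Section 3]{BG}) to obtain $|x^G| \geqs c' q^m$ for $m$ substantially larger than $2\log_q|H|$. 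Applying Lemma \ref{l:calc} to the $G$-classes of elements of prime order meeting $H$ then gives
\[
\mathcal{Q}(G,2) \leqs \frac{|H|^2}{\min_x |x^G|} \to 0
\]
as $q \to \infty$, which by Lemma \ref{l:base} yields $b(G,H) = 2$ and the probabilistic statement $\mathcal{P}(G,2) \to 1$.

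The main obstacle will be the bookkeeping for field (and in ${}^3D_4(q)$, triality graph-field) automorphisms of small prime order, since these can lie in $H$ and have comparatively small $G$-conjugacy classes. As in the proof of Lemma \ref{l:psu3}, I would count them coset-by-coset using the fact that $C_{H\cap G_0}(x)$ is a torus normaliser in $C_{G_0}(x)$, and verify that even with the $(r-1)$ factors counting $G_0$-classes and the $\log \log q$ factor bounding $|\pi(f)|$, the total contribution to $\mathcal{Q}(G,2)$ is $O(q^{-1/2}\log\log q)$. The finitely many small $q$ not covered by the resulting explicit bound are again cleaned up by the {\sc Magma} calculation described above, completing the proof.
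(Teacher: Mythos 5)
Your overall strategy -- reduce via \cite[Table 20]{LZ}, clean up small cases by {\sc Magma}, and settle the infinite families via the fixed-point-ratio method of Lemmas \ref{l:base} and \ref{l:calc} -- is a legitimate and self-contained route to the result, but it is not the route taken in the paper, and as stated it has one genuine gap. The paper's actual proof is far shorter: after observing that every case arising from \cite[Table 20]{LZ} is a \emph{maximal rank} subgroup (either a torus normaliser $N_G(T)$ with $T < G_0$ a maximal torus, or one of six explicit cases $(G_0,H)$ involving products of copies of ${\rm SU}_3(2)$ in $G_2(3)$, ${}^3D_4(2)$, ${}^2F_4(2)$, $F_4(2)$, ${}^2E_6(2)$ and $E_8(2)$), the paper simply cites \cite[Proposition 4.2]{BTh}, which already establishes $b(G,H)=2$ and $\mathcal{P}(G,2)\to 1$ for all the torus normalisers and for the four ${\rm SU}_3(2)^k$-type cases in ${}^2F_4(2)$, $F_4(2)$, ${}^2E_6(2)$ and $E_8(2)$; the remaining two cases $(G_2(3), {\rm SL}_2(3)^2)$ and $({}^3D_4(2), 3\times {\rm SU}_3(2))$ are small enough for {\sc Magma}. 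Your proposal essentially reproves the torus normaliser part of that cited result from scratch, which buys independence from \cite{BTh} at the cost of a substantial amount of bookkeeping of field and graph-field automorphisms.

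The gap is in your treatment of the list-(b) cases. You propose to ``dispatch'' all of these -- explicitly including $F_4(2)$ and $E_6^{\e}(2)$, and by implication $E_8(2)$ -- by the {\sc Magma} routine of Section \ref{ss:comp} (construct $G$ and $H$ as permutation groups, random search for a regular $H$-orbit). This is not feasible for $E_8(2)$: its order is roughly $3\times 10^{74}$ and the index of any maximal subgroup is far beyond what can be handled by permutation-group methods, so neither \texttt{AutomorphismGroupSimpleGroup} nor \texttt{CosetAction} is available, and ${}^2E_6(2)$ and $F_4(2)$ are already borderline. The correct fix, which you have all the tools for, is to notice that in precisely these cases the point stabiliser has $H_0$ a (normaliser of a) direct product of a bounded number of copies of ${\rm SU}_3(2)$, so $|H|$ is an absolute constant (at most $|{\rm SU}_3(2)|^4\cdot|S_4|\cdot|{\rm Out}(G_0)|$), while every prime-order element of $H$ has $|x^G|$ astronomically large; the crude bound $\mathcal{Q}(G,2) \leqs |H|^2/\min_x|x^G| \ll 1$ then gives $b(G,H)=2$ immediately, without {\sc Magma}. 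As written, your proposal does not make this observation. A minor point: your heuristic ``$|G_0|$ roughly $q^{10}$'' for ${}^2B_2(q)$ uses the ambient algebraic-group dimension rather than the degree of $|{}^2B_2(q)| = q^2(q^2+1)(q-1)\approx q^5$ in the paper's parametrisation; this doesn't invalidate the estimate (you still have $|H|^2 = O(q^2)$ against $|x^G| \geqs cq^4$), but the exponents should be stated in the convention you are actually computing with.
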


\begin{proof}
By inspecting \cite[Table 20]{LZ}, we deduce that $H$ is a maximal rank subgroup of $G$. More precisely, either $H = N_G(T)$ is the normaliser of a maximal torus $T<G_0$, or one of the following holds:
\begin{itemize}\addtolength{\itemsep}{0.2\baselineskip}
\item[{\rm (a)}] $G_0 = G_2(3)$, $H$ is of type ${\rm SL}_{2}(3)^2$.
\item[{\rm (b)}] $G_0 = {}^3D_4(2)$, $H$ is of type $3 \times {\rm SU}_{3}(2)$.
\item[{\rm (c)}] $G= {}^2F_4(2)$, $H = {\rm SU}_{3}(2).2$.
\item[{\rm (d)}] $G_0 = F_4(2)$, $H$ is of type ${\rm SU}_{3}(2)^2$.
\item[{\rm (e)}] $G_0 = {}^2E_6(2)$, $H$ is of type ${\rm SU}_{3}(2)^3$.
\item[{\rm (f)}] $G = E_8(2)$, $H$ is of type ${\rm SU}_{3}(2)^4$.
\end{itemize}

If $H = N_G(T)$ or if $(G,H)$ is one of the cases labelled (c)--(f), then the result follows immediately from \cite[Proposition 4.2]{BTh}. Cases (a) and (b) can be handled using {\sc Magma}.
\end{proof}

\vs

By combining Proposition \ref{p:nonpar2} with Propositions \ref{p:altsp}, \ref{p:psl2}, \ref{p:par} and \ref{p:nonpar}, we conclude that the proof of Theorem \ref{t:as} is complete. The same sequence of propositions also establishes Theorem \ref{t:prob}, while Corollaries \ref{c:odd}, \ref{c:nilp} and \ref{c:ex} follow by inspection. 

\section{Proof of Theorem \ref{t:main}}\label{s:main}

In this section we complete the proof of Theorem \ref{t:main}. Let $G \leqs {\rm Sym}(\O)$ be a finite primitive permutation group with soluble stabiliser $H$. By \cite[Theorem 1.1]{LZ}, one of the following holds:
\begin{itemize}\addtolength{\itemsep}{0.2\baselineskip}
\item[{\rm (a)}] $G = V{:}H$ is an affine group, where $V = \mathbb{F}_p^d$  and $H \leqs {\rm GL}(V)$ is irreducible.
\item[{\rm (b)}] $T^m \normeq G \leqs L \wr S_m$ and $G$ acts on $\O = \Gamma^m$ with the product action, where $L \leqs {\rm Sym}(\Gamma)$ is almost simple and primitive with socle $T$ and a soluble point stabiliser.
\item[{\rm (c)}] $G$ is almost simple and the possibilities for $(G,H)$ are recorded in \cite[Tables 14-20]{LZ}. 
\end{itemize}

In view of Theorem \ref{t:as}, we may assume $G$ is an affine or product-type group as described in cases (a) and (b).

\subsection{Affine groups}\label{ss:aff}

Let $G = V{:}H$ be a primitive affine group, where $V = \mathbb{F}_p^d$ and $H \leqs {\rm GL}(V)$ is irreducible and soluble. Since $G$ itself is soluble, we can apply the following theorem of Seress \cite{Seress} (note that every soluble primitive permutation group is of affine type).

\begin{thm}[Seress \cite{Seress}]\label{t:aff}
Let $G \leqs {\rm Sym}(\O)$ be a finite soluble primitive permutation group with point stabiliser $H$. Then $b(G,H) \leqs 4$. Moreover, $b(G,H) \leqs 3$ if $|H|$ is odd.
\end{thm}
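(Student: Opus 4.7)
The plan is to exploit the affine structure. It is classical (e.g.\ via the O'Nan--Scott theorem, or an elementary argument: the socle of a soluble primitive group is a minimal normal abelian subgroup, hence elementary abelian and regular) that any finite soluble primitive group is of affine type: $G = V{:}H$, where $V = \mathbb{F}_p^d$ is the socle and $H \leq \mathrm{GL}(V)$ is a faithful, irreducible, soluble linear group. Identifying $\Omega$ with $V$, any candidate base may be assumed to contain the zero vector, and then the pointwise stabiliser of $\{0, v_1, \ldots, v_{k-1}\}$ in $G$ is precisely $\bigcap_i C_H(v_i)$. Thus $b(G,H) \leq k$ if and only if $H$ has a regular orbit on $V^{k-1}$, and the theorem reduces to proving that any faithful soluble irreducible $H \leq \mathrm{GL}(V)$ has a regular orbit on $V^3$, and on $V^2$ if $|H|$ is odd.

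The next step is to reduce to the case where $H$ acts \emph{primitively} on $V$ as a linear group. When $V = V_1 \oplus \cdots \oplus V_t$ is an $H$-invariant decomposition with $H$ transitively permuting the $V_i$, a standard induction on $\dim V$ using the wreath product embedding $H \leq H_0 \wr S_t$ and regular orbits of the primitive constituent $H_0$ on $V_1$ yields the required base by piecing together carefully chosen tuples supported on the blocks (the top symmetric action is killed by choosing distinct $H_0$-orbit representatives across the $V_i$). In the primitive case, one invokes Suprunenko's classification of primitive soluble subgroups of $\mathrm{GL}(V)$: there is a normal subgroup $E \trianglelefteq H$ that is a central product of extraspecial $r$-groups for some prime $r$, $V$ is an absolutely irreducible faithful $E$-module of dimension a power of $r$, and the quotient $H/Z(H)E$ embeds into a symplectic group $\mathrm{Sp}_{2m}(r)$.

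With this structural picture in hand, the core argument is a counting one: $H$ has a regular orbit on $V^k$ whenever $\sum_{1 \neq h \in H} |C_V(h)|^k < |V|^k$. The symplectic description of $H/E$ forces non-central elements to have small fixed spaces (roughly $|C_V(h)| \leq |V|^{1/2}$ for generic $h$), and Gluck-type estimates combined with this bound on fixed-point ratios deliver $k = 3$ in general. Under the odd-order hypothesis, classical coprime regular-orbit theorems (Espuelas--Wolf, Hartley--Turull) sharpen this to $k = 2$, essentially because the involutions responsible for the largest fixed subspaces are absent. The main obstacle, as is typical, is a finite list of exceptional primitive configurations — small-dimensional modules over $\mathbb{F}_2, \mathbb{F}_3, \mathbb{F}_5$ with extraspecial normal subgroups of type $2^{1+2n}$, $3^{1+2}$ or $5^{1+2}$ — in which these generic estimates fail and must be resolved individually by hand or by computer algebra. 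It is precisely among these exceptions that the bound $b(G,H) = 4$ is attained, simultaneously establishing that the inequality is best possible.
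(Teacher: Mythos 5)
The paper does not prove this statement: it is explicitly labelled as Seress's theorem and cited to \cite{Seress}, with no argument supplied, so there is no internal proof to compare against. What you have written is a sketch of how Seress's original argument proceeds, and it captures the right scaffolding: affine reduction, translation to regular orbits of $H$ on $V^{k-1}$, wreath-product descent to the linearly primitive case, the Suprunenko/P\'alfy structure theory for primitive soluble linear groups, and fixed-point counting to finish.

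Two of the load-bearing steps are, however, harder than your description acknowledges. The imprimitive reduction cannot be settled by ``choosing distinct $H_0$-orbit representatives across the $V_i$'': if $H_0$ has only a handful of regular orbits on $V_1^{k-1}$ and the number $t$ of blocks is large, there is no slack, and Seress in fact proves \emph{simultaneously} that soluble transitive groups have distinguishing number at most $5$ and then feeds that bound back into the count of available block-level orbits. (Note that the present paper invokes exactly this, citing \cite[Theorem 1.2]{Seress}, in the product-type case of Section \ref{ss:prod}.) Likewise the estimate $|C_V(h)| \leq |V|^{1/2}$ is not uniform over non-central $h$: it holds for elements acting nontrivially on the extraspecial normal subgroup $E$, but the sum $\sum_{1\neq h\in H}|C_V(h)|^k$ must be split according to how $h$ sits relative to $E$ and $Z(E)$, and it is precisely this case analysis that generates the small-field exceptional configurations where $b(G,H)=4$ is attained. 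Your outline names the correct landmarks but does not cross the terrain between them, so it reads as a faithful roadmap of Seress's proof rather than a proof.
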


As noted by Seress \cite[p.244]{Seress}, both bounds are sharp in a strong sense. Indeed, a theorem of P\'{a}lfy \cite{Pal1} states that if $G$ is a soluble primitive group of degree $n$, then $|G| \leqs 24^{-1/3}n^c$ with $c = 1 + \log_9(48.24^{1/3}) = 3.243...$, and equality is attained for infinitely many values of $n$. In these cases, Lemma \ref{l:easy} gives $b(G,H) \geqs 4$ and thus the main bound in Theorem \ref{t:aff} is achieved infinitely often. Similarly, if $|H|$ is odd then another result of P\'{a}lfy \cite{Pal2} gives $|G| \leqs 3^{-1/2}n^c$ with $c = 2.278...$ and once again there are infinitely many examples where this bound is attained. 

There are also strong base size results for affine groups in the so-called coprime setting with $(|V|,|H|)=1$. For example, a theorem of Vdovin \cite{Vdovin} gives $b(G,H) \leqs 3$ in this situation (with $H$ soluble), which extends an earlier result of Moret\'{o} and Wolf \cite{MW} in the case where $H$ has odd order. It is worth noting that Vdovin's result has in turn been extended by Halasi and Podoski \cite{HP}, who have proved that $b(G,H) \leqs 3$ for \emph{all} affine groups of the form $G = V{:}H$ with $(|V|,|H|)=1$.

\subsection{Product-type groups}\label{ss:prod}

Let $L \leqs {\rm Sym}(\Gamma)$ be an almost simple primitive group with socle $T$ and soluble point stabiliser $K$. Set $\Omega = \Gamma^m$ with $m \geqs 2$ and consider the product action of $L \wr S_m$ on $\O$. Let $G$ be a subgroup of $L \wr S_m$ with socle $T^m$ such that 
\[
T^m \normeq G \leqs L \wr P
\]
and $P \leqs S_m$ is a transitive permutation group induced by the conjugation action of $G$ on the factors of $T^m$. Then $G \leqs {\rm Sym}(\O)$ is a primitive group of product-type with soluble point stabiliser $H = G \cap (K \wr P)$. As explained in \cite{LZ}, every primitive product-type group with a soluble point stabiliser is of this form. Note that $G = T^mH$, so $H$ also induces $P$ on the factors of $T^m$ and thus the solubility of $H$ implies that $P$ is also soluble.

\begin{thm}\label{t:pa}
Let $G \leqs {\rm Sym}(\O)$ be a finite primitive group of product-type with soluble point stabiliser $H$. Then $b(G,H) \leqs 5$.
\end{thm}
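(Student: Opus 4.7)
The plan is to reduce to the almost simple case via a standard product-action argument. Recall the setup: $G \leqs L \wr P$ acts on $\Omega = \Gamma^m$ via the product action, $H = G \cap (K \wr P)$, $L \leqs \mathrm{Sym}(\Gamma)$ is almost simple primitive with soluble stabiliser $K$, and $P \leqs S_m$ is transitive. Since $H$ is soluble and projects onto $P$, the group $P$ is soluble. By Theorem \ref{t:as} we have $c := b(L,K) \leqs 5$, which is the crucial input.

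The central tool is the following product-action bound: \emph{$b(G,H) \leqs c$ whenever the diagonal action of $L$ on $\Gamma^c$ has at least $m$ regular orbits.} I would prove this constructively by choosing $\tau_1, \ldots, \tau_m \in \Gamma^c$ representing $m$ distinct regular $L$-orbits and forming the $c$ tuples $\vec{\alpha}_k \in \Gamma^m$ (for $k=1,\ldots,c$) whose $i$-th coordinate is the $k$-th entry of $\tau_i$. Any element $(l_1,\ldots,l_m)\sigma \in L \wr S_m$ fixing all $\vec{\alpha}_k$ must send column $\sigma^{-1}(i)$ into column $i$ by an $L$-move; since distinct columns lie in distinct $L$-orbits, $\sigma$ is forced to be trivial, and then each $l_i$ is trivial by the base-tuple property of column $i$. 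Restricting to $G$ then yields $b(G,H) \leqs c \leqs 5$ in the generic regime.

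The number of regular orbits of $L$ on $\Gamma^c$ equals $\mathcal{P}(L,c)\cdot|\Gamma|^c/|L|$, and by Theorem \ref{t:prob}(i) the probability $\mathcal{P}(L,4)$ tends to $1$ as $|L| \to \infty$; hence for $|L|$ large compared with $m$ the orbit count easily exceeds $m$ and the lemma applies. The main obstacle is the residual regime where $m$ is large relative to $|\Gamma|^c/|L|$, typically forcing $L$ and $\Gamma$ to be small while $m$ grows without bound (as in the extremal family $G = S_5 \wr C_m$ from the introduction, where $L = S_5$ has only eleven regular orbits on $\Gamma^5$). Here one must refine the argument by allowing repeated orbit-colours among the $m$ columns, provided the induced colouring $\phi \colon \{1,\ldots,m\} \to \{1,\ldots,r\}$ (with $r$ the number of regular orbits) has trivial stabiliser in $P$; equivalently, the distinguishing number of $P$ on $\{1,\ldots,m\}$ must be at most $r$. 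Since $P$ is a soluble transitive subgroup of $S_m$, such distinguishing colourings exist for essentially every $P$ as soon as $r \geqs 2$, which holds outside a bounded list of pairs $(L,K)$. The finitely many exceptional pairs, including the three extremal triples of Theorem \ref{t:as}(i) where $c = 5$, should be handled by an explicit orbit count together with a direct construction of $\phi$ for every soluble transitive $P \leqs S_m$ that can arise.
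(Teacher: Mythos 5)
Your overall strategy is on the right track and mirrors the structure of the paper's own argument: recast the product-type base-size question in terms of regular orbits of $L$ on $\Gamma^c$ combined with a distinguishing colouring of $P$ (this is exactly the content of \cite[Theorem 2.13]{BC}, which the paper cites), then use Theorem \ref{t:prob} and a short list of small cases. Your constructive proof of the ``$m$ distinct regular orbits'' lemma and its refinement to distinguishing colourings are both correct as stated.

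However, there is a genuine gap in the last paragraph. You assert that ``such distinguishing colourings exist for essentially every $P$ as soon as $r \geqs 2$,'' which would require $d(P) \leqs 2$ for essentially every soluble transitive $P \leqs S_m$. This is false: the distinguishing number of a soluble transitive group is not generically $2$, and there are infinitely many soluble transitive groups with $d(P) = 3, 4$ or $5$ (e.g.\ $S_4$ on $4$ points, and many imprimitive constructions built from it). The missing ingredient, which the paper uses explicitly, is Seress's theorem \cite[Theorem 1.2]{Seress} that $d(P) \leqs 5$ for \emph{every} soluble transitive permutation group $P$. Without this uniform bound your argument cannot get off the ground, because you would have to handle infinitely many distinct groups $P$ (as your final sentence inadvertently concedes by proposing ``a direct construction of $\phi$ for every soluble transitive $P \leqs S_m$''). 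With Seress's bound in hand, the correct threshold is ${\rm reg}(L,5) \geqs 5$, not ${\rm reg}(L,c) \geqs 2$. This is then verified cleanly: if $b(L,K) \leqs 4$, any base of size $b(L,K)$ extends arbitrarily in the remaining coordinates, giving ${\rm reg}(L,5) \geqs |\Gamma|^{5-b(L,K)} \geqs |\Gamma| \geqs 5$; and for the three triples with $b(L,K) = 5$ (Theorem \ref{t:as}(i)), one computes ${\rm reg}(L,5) \geqs 5$ directly, exactly as the paper does. (The paper also shortcuts the $b(L,K) \leqs 3$ case via the inequality from \cite[Lemma 3.8]{BS}, but this is a convenience rather than a necessity.) So your proposal would be correct once Seress's distinguishing-number bound is invoked and the threshold corrected from $2$ to $5$; as written, the crucial step is unsupported.
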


\begin{proof}
As above, write $G \leqs L \wr P \leqs {\rm Sym}(\O)$ where $\O = \Gamma^m$, $m \geqs 2$ and $L \leqs {\rm Sym}(\Gamma)$ is almost simple. Let $d(P)$ be the distinguishing number of $P$, which is the minimal number of colours needed to colour the elements of $\{1,\ldots, m\}$ in such a way that the stabiliser in $P$ of this colouring is trivial. Then by the proof of \cite[Lemma 3.8]{BS} we have
\[
b(G,H) \leqs \left\lceil\frac{\lceil \log d(P) \rceil}{\lfloor \log |\Gamma| \rfloor}\right\rceil+b(L,K).
\]
Now Theorem \ref{t:as} gives $b(L,K) \leqs 5$ and the solubility of $P$ implies that $d(P) \leqs 5$ by \cite[Theorem 1.2]{Seress}. Since $|\Gamma| \geqs 5$, it follows that $b(G,H) \leqs 5$ if $b(L,K) \leqs 3$. 

Now assume $b(L,K) = 4$. If $|\Gamma| \geqs 8$ then the above bound yields 
$b(G,H) \leqs 5$, so we may assume $|\Gamma|<8$ and thus $L = S_5$ and $K = S_4$ by Theorem \ref{t:as}. For a positive integer $d$, let ${\rm reg}(L,d)$ denote the number of regular orbits of $L$ with respect to its natural action on $\Gamma^d$. Since $d(P) \leqs 5$, \cite[Theorem 2.13]{BC} implies that $b(G,H) \leqs 5$ if and only if ${\rm reg}(L,5) \geqs 5$ (Vdovin makes the same observation in \cite{Vdovin2}). Using {\sc Magma}, it is easy to check that 
${\rm reg}(L,5) = 11$ and thus $b(G,H) \leqs 5$ as required.

To complete the proof, we may assume $b(L,K)=5$. Here Theorem \ref{t:as} implies that one of the following holds, where $T$ denotes the socle of $L$:
\begin{itemize}\addtolength{\itemsep}{0.2\baselineskip}
\item[{\rm (a)}] $L = S_8$, $K = S_4 \wr S_2$ and $|\Gamma| = 35$.
\item[{\rm (b)}] $T = {\rm L}_{4}(3)$, $K = P_2$ and $|\Gamma| = 130$.
\item[{\rm (c)}] $T = {\rm U}_{5}(2)$, $K=P_1$ and $|\Gamma| = 165$.
\end{itemize}
We claim that ${\rm reg}(L,5) \geqs 5$ in each of these cases, which gives $b(G,H) \leqs 5$ as above.

In case (a), the proof of \cite[Theorem 2]{VZ} gives ${\rm reg}(L,5) \geqs 12$ and thus $b(G,H) \leqs 5$ as required. In fact, a straightforward {\sc Magma} computation shows that ${\rm reg}(L,5)=600$ in this case. To handle cases (b) and (c), write $K = L_{\gamma}$ for some fixed $\gamma \in \Gamma$ and let $t$ be the number of tuples of the form $(\gamma,\l_1,\l_2,\l_3,\l_4) \in \Gamma^5$ with $\bigcap_{i}K_{\l_i} = 1$. Then ${\rm reg}(L,5) \geqs 5t/|L|$ and so we just need to verify the bound $t \geqs |L|$ for $L = {\rm Aut}(T)$. Using {\sc Magma}, we calculate that
\[
t = 100776960 > |{\rm Aut}({\rm L}_{4}(3))| = 24261120
\]
in case (b) and similarly
\[
t = 496668672 > |{\rm Aut}({\rm U}_{5}(2))| = 27371520
\]
in case (c). The result follows. 
\end{proof}

\begin{rem}\label{r:best}
It is easy to see that there are infinitely many finite primitive groups $G$ with a soluble stabiliser $H$ and $b(G,H)=5$. For example, we can take any group of the form $L \wr C_m$ with its product action on $\O = \Gamma^m$, where $m$ is a positive integer and $L \leqs {\rm Sym}(\Gamma)$ is one of the groups in (a), (b) or (c) above. We can also take $G = S_5 \wr C_m$ acting on $5^m$ points for any $m \geqs 2$. Indeed, in this case $b(G,H) \leqs 5$ by Theorem \ref{t:pa}, while \cite[Theorem 2.13]{BC} implies that $b(G,H) \geqs 5$ since $L = S_5$ is $4$-transitive on $\Gamma$ and therefore ${\rm reg}(L,4) = 1 < d(C_m)$. 
\end{rem}

By combining Theorems \ref{t:aff} and \ref{t:pa} with Theorem \ref{t:as}, we conclude that the proof of Theorem \ref{t:main} is complete.

\section{The tables}\label{s:tables}

In this final section, we present the tables referred to in the statement of Theorem \ref{t:as}. First we record some remarks on their content.

\begin{rem}\label{r:cases}
In Tables \ref{tab:as3} and \ref{tab:as4}, we exclude the almost simple groups  with socle $G_0$, where $G_0$ is one of the following:
\[
{\rm L}_{2}(4), \, {\rm L}_{2}(5), \, {\rm L}_{2}(9), \, {\rm L}_{3}(2), \, {\rm L}_{4}(2), \, {\rm PSp}_{4}(2)', \, {\rm PSp}_{4}(3).
\]
This is justified by the existence of the following isomorphisms:
\[
{\rm L}_{2}(4) \cong {\rm L}_{2}(5) \cong A_5,\, {\rm L}_{2}(9) \cong {\rm PSp}_{4}(2)' \cong A_6, \, {\rm L}_{3}(2) \cong {\rm L}_{2}(7),
\] 
\[
{\rm L}_{4}(2) \cong A_8, \, {\rm PSp}_{4}(3) \cong {\rm U}_{4}(2).
\]
So for example, a reader who is interested in the groups with socle ${\rm L}_{4}(2)$ should consult Table \ref{tab:as1} and the cases with $G=S_8$ or $A_8$.

Similarly, since ${}^2G_2(3)' \cong {\rm L}_{2}(8)$ and $G_2(2)' \cong {\rm U}_{3}(3)$, we also exclude the groups with socle ${}^2G_2(3)'$ or $G_2(2)'$ in Table \ref{tab:as2}.
\end{rem}

\begin{rem}\label{r:class}
Let us record some additional comments on Tables \ref{tab:as3} and \ref{tab:as4}.  
\begin{itemize}\addtolength{\itemsep}{0.2\baselineskip}
\item[{\rm (i)}] In Table \ref{tab:as3}, we adopt the notation for parabolic subgroups described in Remark \ref{r:nota}. In addition, if $q=p^f$ with $p$ prime, then $\phi$ denotes a field automorphism of order $f$.
\item[{\rm (ii)}] Suppose $G_0 = {\rm L}_{2}(q)$ and $H$ is of type $P_1$ (see Table \ref{tab:as3}). Here $b(G,H) \in \{3,4\}$, with $b(G,H) = 3$ if and only if 
$G \leqs {\rm PGL}_{2}(q)$, or 
\[
\mbox{$q=p^f$, $p \geqs 3$, $f$ is even and $G = \la G_0, \delta\phi^{f/2} \ra  = G_0.2$,}
\]
where $\delta$ is a diagonal automorphism of $G_0$ (see \eqref{e:aut}).
\item[{\rm (iii)}] In Table \ref{tab:as3}, suppose $G_0 = {\rm U}_{4}(q)$ and $H$ is of type $P_1$, so the solubility of $H$ implies that $q \in \{2,3\}$. If $q=2$ then $b(G,H)=4$. For $q=3$ we have 
\[
b(G,H) = \left\{\begin{array}{ll}
4 & G \in \{G_0.D_8, G_0.[4], G_0.2_1\} \\
3 & \mbox{otherwise}
\end{array}\right.
\]
where $G_0.2_1$ is the unique index-two subgroup of ${\rm PGU}_{4}(3)$.
\item[{\rm (iv)}] Suppose $G_0 = {\rm L}_{3}(q)$ and $H$ is of type ${\rm GU}_{3}(q^{1/2})$ (see Table \ref{tab:as4}). Here $q=4$ since $H$ is soluble and we get 
\[
b(G,H) = \left\{\begin{array}{ll}
3 & \mbox{if $|G:G_0| \geqs 3$ or $G = G_0.2_2$} \\
2 & \mbox{otherwise}
\end{array}\right.
\]
where $G_0.2_2$ contains involutory field automorphisms. 
\item[{\rm (v)}] If $G_0 = {\rm L}_{4}(q)$ and $H$ is of type ${\rm GL}_{2}(q) \wr S_2$, then the solubility and maximality of $H$ implies that $q=3$ and $G$ is one of $G_0.2^2$, $G_0.2_1 = {\rm PGL}_{4}(3)$ or $G_0.2_3$ (the latter group contains an involutory graph automorphism $x$ with $C_{G_0}(x) = {\rm PSO}_{4}^{-}(3).2$). We get $b(G,H) = 3$ in every case.
\end{itemize}
\end{rem}

\begin{table}
\[
\begin{array}{cll} \hline
b & G & H \\ \hline
5 & S_8 & S_4 \wr S_2 \\
4 & S_5 & S_4 \\
& A_6.2^2 & {\rm AGL}_{1}(9).2 \\
& S_6 & S_4 \times S_2, S_2 \wr S_3, S_3 \wr S_2  \\
& A_8 & (S_4 \wr S_2) \cap G  \\
3 & A_5 & A_4, D_{10} \\
& S_5 & S_3 \times S_2,\, 5{:}4 \\
& A_6.2^2 & D_{20}.2,\, [32]  \\
& {\rm PGL}_{2}(9) & D_{20},\, 3^2{:}Q_8 \\
& {\rm M}_{10} & {\rm AGL}_{1}(9) \\
& A_6 & (S_4 \times S_2) \cap G,\, (S_2 \wr S_3) \cap G,\, (S_3 \wr S_2) \cap G  \\
& S_7 & S_4 \times S_3  \\
& A_7 & (S_4 \times S_3) \cap G \\
& S_8 & S_2 \wr S_4  \\
& S_9 & S_3 \wr S_3,\, {\rm AGL}_{2}(3)  \\ 
& A_9 & (S_3 \wr S_3) \cap G  \\
& S_{12} & S_3 \wr S_4,\, S_4 \wr S_3  \\
& A_{12} & (S_3 \wr S_4) \cap G,\, (S_4 \wr S_3) \cap G \\
& S_{16} & S_4 \wr S_4  \\
& A_{16} & (S_4 \wr S_4) \cap G  \\
& {\rm M}_{11} & 3^2{:}Q_8.2 \\ 
& {\rm M}_{12} & 3^2{:}2S_4,\, 2^{1+4}{:}S_3,\, 4^2{:}D_{12} \\
& {\rm M}_{12}.2 & 2^{1+4}{:}S_3.2,\, 4^2{:}D_{12}.2,\, 3^{1+2}{:}D_8 \\
& {\rm J}_{2} & 2^{2+4}{:}(3 \times S_3)  \\
& {\rm J}_2.2 & 2^{2+4}{:}(3 \times S_3).2  \\
& {\rm Fi}_{22} & 3^{1+6}{:}2^{3+4}{:}3^2{:}2  \\
& {\rm Fi}_{22}.2 & 3^{1+6}{:}2^{3+4}{:}3^2{:}2.2  \\
& {\rm Fi}_{23} & 3^{1+8}.2^{1+6}.3^{1+2}.2S_4 \\
\hline
\end{array}
\]
\caption{Alternating and sporadic groups}
\label{tab:as1}
\end{table}

\begin{table}
\[
\begin{array}{clll} \hline
b & G_0 & H \cap G_0 & \mbox{Conditions} \\ \hline
3 & F_4(q) & [2^{22}]{:}S_3^2 & G = F_4(2).2 \\
& G_2(q) & [q^6]{:}C_{q-1}^2 & \mbox{$p=3$, $G \not\leqs \la G_0, \phi \ra$} \\
& & [3^5]{:}{\rm GL}_{2}(3) & G = G_2(3) \\
& {}^3D_4(q) & [q^{11}]{:}((q^3-1) \circ {\rm SL}_{2}(q)).(2,q-1) & q=2,3 \\
& {}^2F_4(q)' & [2^9]{:}5{:}4,\, [2^{10}]{:}S_3 & q=2 \\
& {}^2B_2(q) & [q^{2}]{:}C_{q-1} & \\
& {}^2G_2(q) & [q^{3}]{:}C_{q-1} & q \geqs 27 \\ \hline
\end{array}
\]
\caption{Exceptional groups}
\label{tab:as2}
\end{table}

\begin{table}
\[
\begin{array}{clll} \hline
b & G_0 & \mbox{Type of $H$} & \mbox{Conditions} \\ \hline
5 & {\rm L}_{4}(q) & P_2 & q=3 \\
& {\rm U}_{5}(q) & P_1 & q=2 \\
4 & {\rm L}_{2}(q) & P_1 & \mbox{See Remark \ref{r:class}(ii)} \\
& {\rm L}_{3}(q) & P_1, P_2 & G = {\rm L}_{3}(3) \\
& & P_{1,2} & G = {\rm L}_{3}(4).D_{12} \\
& {\rm L}_{4}(q)  & P_{1,3} & G = {\rm L}_{4}(3).2^2 \\
& {\rm U}_{4}(q) & P_1 & \mbox{$q = 2,3$; see Remark \ref{r:class}(iii)} \\
& {\rm L}_{5}(q) & P_{2,3} & G = {\rm L}_{5}(3).2 \\ 
& {\rm L}_{6}(q) & P_{2,4} & G = {\rm L}_{6}(3).2^2 \\
& {\rm PSp}_{6}(q) & P_2 & G = {\rm PGSp}_{6}(3) \\
& \O_7(q) & P_2 & G = {\rm SO}_{7}(3) \\
& {\rm P\O}_{8}^{+}(q) & P_2 & \mbox{$q=2,3$ and $G \ne G_0$} \\
3 & {\rm L}_{2}(q) & P_1 &  \mbox{See Remark \ref{r:class}(ii)} \\
& {\rm L}_{3}(q) & P_{1,2} & G \not\leqs \la {\rm PGL}_{3}(q),\phi \ra, G \ne {\rm L}_{3}(4).D_{12} \\ 
& {\rm U}_{3}(q) & P_1 & \\
& {\rm L}_{4}(q) & P_{1,3} & G = {\rm L}_{4}(3).2 \ne {\rm PGL}_{4}(3) \\
& {\rm U}_{4}(q) & P_1 & \mbox{$q=2,3$; see Remark \ref{r:class}(iii)} \\
& {\rm Sp}_{4}(q) & [q^4]{:}C_{q-1}^2 & \mbox{$q \geqs 4$ even, $G \not\leqs \la G_0, \phi \ra$} \\ 
& {\rm L}_{5}(q) & P_{2,3} & G = {\rm L}_{5}(2).2 \\
& {\rm L}_{6}(q) & P_{2,4} & \mbox{$G = {\rm L}_{6}(2).2$ or ${\rm L}_{6}(3).2 \ne {\rm PGL}_{6}(3)$} \\
& {\rm PSp}_{6}(q) & P_2 & \mbox{$q=2$ or $G = {\rm PSp}_{6}(3)$} \\
& \O_7(q) & P_2 & G = \O_{7}(3) \\
& {\rm P\O}_{8}^{+}(q) & P_2 & \mbox{$q=2,3$ and $G = G_0$} \\
& & P_{1,3,4} & \mbox{$q=2,3$, $G \not\leqs {\rm PGO}_{8}^{+}(q)$} \\ \hline
\end{array}
\]
\caption{Classical groups in parabolic actions}
\label{tab:as3}
\end{table}

\begin{table}
\[
\begin{array}{clll} \hline
b & G_0 & \mbox{Type of $H$} & \mbox{Conditions} \\ \hline
4 
& {\rm U}_{4}(q) & {\rm GU}_{3}(q) \times {\rm GU}_{1}(q) & q = 2 \\
& & {\rm GU}_{1}(q) \wr S_4 & q = 2 \\
3 & {\rm L}_{2}(q) & {\rm GL}_{1}(q) \wr S_2 & {\rm PGL}_{2}(q) < G \\
& & {\rm GL}_{1}(q^2) & {\rm PGL}_{2}(q) \leqs G \\
& & 2^{1+2}_{-}.{\rm O}_{2}^{-}(2) & q=7 \\
& {\rm L}_{3}(q) & {\rm GL}_{2}(q) \times {\rm GL}_{1}(q) & G = {\rm L}_{3}(3).2 \\
& & {\rm GL}_{1}(q^3) & G = {\rm L}_{3}(3).2 \\ 
& & {\rm GU}_3(q^{1/2}) & \mbox{$q=4$; see Remark \ref{r:class}(iv)} \\
& {\rm U}_{3}(q) & {\rm GU}_{2}(q) \times {\rm GU}_{1}(q) & q=3 \\
& & {\rm GU}_{1}(q) \wr S_3 & \mbox{$q=3$, or $q=4$ and $G \ne G_0$} \\ 
& {\rm L}_{4}(q) & {\rm GL}_{2}(q) \wr S_2 & \mbox{$q=3$; see Remark \ref{r:class}(v)} \\
& & {\rm O}_{4}^{+}(q) & G = {\rm L}_{4}(3).2^2 \\
& {\rm U}_{4}(q) & {\rm GU}_{1}(q) \wr S_4 & G = {\rm U}_{4}(3).D_8 \\
& & {\rm GU}_{2}(q) \wr S_2 & \mbox{$q=3$ and $G \ne G_0$} \\
& {\rm U}_{5}(q) & {\rm GU}_{3}(q) \times {\rm GU}_{2}(q) & q=2 \\
& {\rm U}_{6}(q) & {\rm GU}_{3}(q) \wr S_2 & q = 2 \\
& {\rm PSp}_{6}(q) & {\rm Sp}_{2}(q) \wr S_3 & G = {\rm PGSp}_{6}(3) \\
& {\rm P\O}_{8}^{+}(q) & {\rm O}_{4}^{+}(q) \wr S_2 & \mbox{$q = 3$ and $|G:G_0| \geqs 6$} \\
& & {\rm O}_{2}^{-}(q) \wr S_4 & q = 2 \\
& & {\rm O}_{2}^{-}(q) \times {\rm GU}_{3}(q) & G = \O_{8}^{+}(2).S_3 \\
\hline
\end{array}
\]
\caption{Classical groups in non-parabolic actions}
\label{tab:as4}
\end{table}

\clearpage

\end{document}